\documentclass[11pt,twoside,reqno]{amsart}



\usepackage{microtype}
\usepackage[OT1]{fontenc}
\usepackage{type1cm}
\usepackage{amssymb}
\usepackage{enumerate}
\usepackage{xcolor}
\usepackage{mathrsfs}
\usepackage{comment}


\usepackage[left=2.3cm,top=3cm,right=2.3cm]{geometry}
\geometry{a4paper,centering}


\numberwithin{equation}{section}

\theoremstyle{plain}
\newtheorem{theorem}{Theorem}[section]

\newtheorem{corollary}[theorem]{Corollary}
\newtheorem{proposition}[theorem]{Proposition}

\newtheorem{lemma}[theorem]{Lemma}

\theoremstyle{remark}
\newtheorem{remark}[theorem]{Remark}

\newtheorem{example}[theorem]{Example}
\newtheorem*{ack}{Acknowledgements}

\theoremstyle{definition}

\newcommand{\BB}{\mathcal{B}}
\newcommand{\HH}{\mathcal{H}}

\newcommand{\R}{\mathbb{R}}

\newcommand{\C}{\mathbb{C}}

\newcommand{\N}{\mathbb{N}}
\newcommand{\hhh}{\mathtt{h}}

\newcommand{\iii}{\mathtt{i}}
\newcommand{\jjj}{\mathtt{j}}
\newcommand{\kkk}{\mathtt{k}}
\renewcommand{\lll}{\mathtt{l}}
\newcommand{\mmm}{\mathtt{m}}
\newcommand{\eps}{\varepsilon}
\newcommand{\fii}{\varphi}

\newcommand{\la}{\langle}
\newcommand{\ra}{\rangle}

\renewcommand{\ge}{\geqslant}
\renewcommand{\le}{\leqslant}
\renewcommand{\geq}{\geqslant}
\renewcommand{\leq}{\leqslant}

\DeclareMathOperator{\dimh}{dim_H}

\DeclareMathOperator{\dima}{dim_A}

\DeclareMathOperator{\dist}{dist}
\DeclareMathOperator{\diam}{diam}

\begin{document}

\title{Self-conformal sets with positive Hausdorff measure}

\author{Jasmina Angelevska}
\address[Jasmina Angelevska]
        {Ss. Cyril and Methodius University of Skopje \\
         Faculty of Electrical Engineering and Information Technologies \\
         Rugjer Boshkovikj \\
         1000 Skopje \\
         North Macedonia}
\email{jasmina.angelevska@hotmail.com}

\author{Antti K\"aenm\"aki}
\address[Antti K\"aenm\"aki]
        {Department of Physics and Mathematics \\
         University of Eastern Finland \\
         P.O.\ Box 111 \\
         FI-80101 Joensuu \\
         Finland}
\email{antti.kaenmaki@uef.fi}

\author{Sascha Troscheit}
\address[Sascha Troscheit]
        {Faculty of Mathematics \\
         University of Vienna \\
         Oskar Morgenstern Platz 1, 1090 Wien \\
         Austria}
\email{sascha.troscheit@univie.ac.at}

\subjclass[2000]{Primary 28A78; Secondary 28A80.}
\keywords{Hausdorff measure, Hausdorff content, Ahlfors regularity, self-conformal set, weak
separation condition, dimension drop}
\date{\today}

\begin{abstract}
  We investigate the Hausdorff measure and content on a class of quasi self-similar sets that
  include, for example, graph-directed and sub self-similar and self-conformal sets. 
  We show that any Hausdorff measurable subset of such a set has comparable Hausdorff
  measure and Hausdorff content. In particular, this proves that graph-directed and sub
  self-conformal sets with positive 
  Hausdorff measure are Ahlfors regular, irrespective of separation conditions. When restricting to
  self-conformal subsets of the real line with Hausdorff
  dimension strictly less than one, we additionally show that the weak separation condition is
  equivalent to Ahlfors regularity and its failure implies full Assouad dimension. In fact, we resolve a self-conformal extension of the
  dimension drop conjecture for self-conformal sets with positive Hausdorff measure by showing that
  its Hausdorff dimension falls below the expected value if and only if there are exact overlaps.
\end{abstract}

\maketitle

\section{Introduction}

Self-conformal sets are a natural generalisation of self-similar sets. Instead of similitudes, they
are defined by using contractive conformal maps $\fii_1,\ldots,\fii_N$. The prime examples include
Julia sets of hyperbolic rational functions on $\C$, such as Julia sets for $z \mapsto z^2+c$ with
$|c| \ge 2.48$. As the only contractive entire functions on $\C$ are the similitudes, one has to
restrict the definition to a bounded open set $\Omega$ where the mappings $\fii_i$ are
contractive. In the real line, the maps $\fii_i$ are contractive $C^{1+\alpha}$-functions with
non-vanishing derivative. The self-conformal set is the unique non-empty compact set $F$ satifying
\begin{equation*}
  F = \bigcup_{i=1}^N \fii_i(F) = \bigcap_{n \in \N} \bigcup_{\iii \in \{1,\ldots,N\}^n} \fii_\iii(X),
\end{equation*}
where $X \subset \Omega$ is any compact set satisfying $\fii_i(X) \subset X$ and $\fii_\iii = \fii_{i_1} \circ \cdots \circ \fii_{i_n}$ for all $\iii = i_1 \cdots i_n$.

We are primarily interested in determining the size of a self-conformal set $F$. If the
``construction pieces'' $\fii_i(X)$ are separated, then, by relying on conformality, one expects the
dimension of $F$ to be close to the value $s$ for which $1 = \sum_{i=1}^N \diam(\fii_i(X))^s \approx
\sum_{i=1}^N \|\fii_i'\|^s$. Intuitively, one should get better and better estimates for the
dimension by iterating this idea. Indeed, this is precisely what happens: it is straightforward to
see that in general, the Hausdorff dimension of $F$ is at most the limiting value of such
approximations, $\dimh(F) \le P^{-1}(0)$, where
\begin{equation*}
  P(s) = \lim_{n \to \infty} \tfrac{1}{n} \log\sum_{\iii \in \{1,\ldots,N\}^n} \|\fii_\iii'\|^s,
\end{equation*}
and if there is enough separation, then $\dimh(F) = P^{-1}(0)$. In fact, Peres, Rams, Simon, and
Solomyak \cite{PeresEtAl2001} have shown that if $s = P^{-1}(0)$, then the $s$-dimensional Hausdorff
measure of $F$ is positive, $\HH^s(F)>0$, if $F$ satisfies the open set condition, a natural
separation condition under which the overlapping of the construction pieces of roughly the same
diameter has bounded multiplicity.

We focus on the case $\dimh(F) < P^{-1}(0)$. At first, it is easy to see that this occurs when
there are exact overlaps, meaning that there are $\iii \ne \jjj$ for which $\fii_\iii|_F =
\fii_\jjj|_F$. A related separation condition is the weak separation condition which, roughly
speaking, is otherwise the same as the open set condition but allows exact overlapping. The famous
dimension drop conjecture claims that exact overlapping is the only way to drop the Hausdorff
dimension of $F$ below $P^{-1}(0)$. Hochman \cite{Hochman2014} has verified the conjecture for all
self-similar sets in the real line defined by algebraic parameters. It should be remarked that
Hochman's proof does not generalise to the self-conformal case.

In the self-similar case, Zerner \cite{Zerner1996} introduced the identity limit criterion,
\begin{equation*}
  \{\fii_\iii^{-1} \circ \fii_\jjj\}_{\iii,\jjj} \text{ does not accumulate to the identity},
\end{equation*}
and showed that it is equivalent to the weak separation condition. The self-conformal case is more
complicated since we cannot use inverses. Nevertheless, in Section \ref{sec:self-conformal}, we
introduce the identity limit criterion for the conformal setting and in our main technical lemma,
Lemma \ref{thm:close-maps}, we show that if it is not satisfied, then there are arbitrary small
$\delta>0$ such that, for some distinct maps $\fii_\iii$ and $\fii_\jjj$,
\begin{equation*}
  |\fii_\iii(x)-\fii_\jjj(x)| \approx \delta\|\fii_\iii'\| \approx \delta\|\fii_\jjj'\|
\end{equation*}
for all $x$. The lemma thus gives the existence of maps which are arbitrarily close to each other in
the relative scale. Applying this observation inductively, we infer that the overlapping of the
construction pieces of roughly the same diameter has unbounded multiplicity and hence, the weak
separation condition does not hold. Conversely, pigeonholing such unbounded multiplicity implies the
existence of two maps being arbitrarily close to each other in the relative scale. Therefore, we see that
the identity limit criterion is equivalent to the weak separation condition also in the
self-conformal case. This is stated in Theorem \ref{thm:wsc-ilc}.

The role of the identity limit criterion is essential in our considerations. The Assouad dimension
of $F$, $\dima(F)$, is the maximal Hausdorff dimension of its weak tangents, the Hausdorff limits of
successive magnifications. In general, the Assouad dimension serves as an upper bound for the
Hausdorff dimension but if the set is Ahlfors regular, then the two dimensions agree. Fraser,
Henderson, Olson, and Robinson \cite{FraserHendersonOlsonRobinson2015} showed that if a
self-similar set in the real line does not satisfy the identity limit criterion, then its Assouad
dimension is $1$. In Theorem \ref{thm:not-wsc-assouad-one}, we generalise this observation to the
self-conformal case. To prove this, we again apply Lemma \ref{thm:close-maps} inductively to find
small scales containing as many equally distributed points of $F$ as we wish. This shows that the
unit interval appears as a weak tangent and proves the result.

In our main result, Theorem \ref{thm:mainTheoremQuasi}, we prove that if $s = \dimh(F)$, then the
$s$-dimensional Hausdorff measure and content are equivalent. An almost immediate consequence of
this is that the positivity of the Hausdorff measure is equivalent to the Ahlfors regularity. The
result generalises the corresponding theorem of Farkas and Fraser \cite{FarkasFraser2015} in the
self-similar case. It should be emphasized that their proof does not generalise to the
self-conformal case. With this theorem, we can now address the dimension drop conjecture on
self-conformal sets in the real line having Hausdorff dimension strictly less than $1$. Indeed, Lau,
Ngai, Wang \cite{LauNgaiWang2009} have shown that the weak separation condition implies $\HH^s(F)>0$
for $s = \dimh(F)$. As mentioned above, this implies Ahlfors regularity and therefore, also the
Assouad dimension is strictly less than $1$. Since this further implies the identity limit criterion
and hence also the weak separation condition, we conclude that all of these conditions are
equivalent. As the only difference between the open set condition and the weak separation condition
is the exact overlapping, we see, by recalling the result of Peres, Rams, Simon, and Solomyak
\cite{PeresEtAl2001}, that the dimension drop conjecture holds for self-conformal sets with positive
Hausdorff measure. This can be considered to be the main consequence of our considerations.
The result is stated in Theorem \ref{thm:main3}.

The rest of the article is organized as follows. We show the equivalence of the Hausdorff measure
and content in a slightly more general setting of quasi self-similar sets in Section
\ref{sec:quasi}. Section \ref{sec:self-conformal} is devoted to the study of self-conformal sets and
their separation conditions in $\R^d$. Results in the real line and dimension drop conjecture are
explored in Section \ref{sec:dim-drop}. The proofs can be found in Sections
\ref{sec:proofMain}--\ref{sec:not-wsc-assouad-one}.

\section{Quasi self-similar sets} \label{sec:quasi}

Recall that the \emph{$s$-dimensional Hausdorff measure} $\HH^s$ of a set $A \subset \R^d$ is defined by
\begin{equation*}
  \HH^s(A) = \lim_{\delta \downarrow 0} \HH^s_\delta(A) = \sup_{\delta>0} \HH^s_\delta(A),
\end{equation*}
where
\begin{equation*}
  \HH^s_\delta(A) = \inf\biggl\{ \sum_i \diam(U_i)^s : A \subset \bigcup_i U_i \text{ and } \diam(U_i) \le \delta \biggr\}
\end{equation*}
is the \emph{$s$-dimensional Hausdorff $\delta$-content} of $A$. The Hausdorff measure is Borel
regular and the Hausdorff content is an outer measure -- usually highly non-additive and not a Borel
measure. However, the Hausdorff content is slightly easier to compute, and is always finite for bounded
sets, irrespective of $s$. 
It is straightforward to see that $\HH^s(A)=0$ if and only if $\HH^s_\infty(A)=0$ and so the Hausdorff measure and
content share the same critical exponent, the \emph{Hausdorff dimension} $\dimh$ of $A$ which is defined by 
$\dimh(A) = \inf\{s:\HH^s(A)=0\}$.

Our main result is the following theorem. We postpone its proof until Section \ref{sec:proofMain}.

\begin{theorem} \label{thm:mainTheoremQuasi}
  Let $F \subset \R^d$ be a non-empty compact set and $s=\dimh(F)$. 
  Suppose that there is a constant $D \ge 1$ such that for each $x \in F$ and $0<r\le \diam(F)$ there
  exists a mapping $g \colon F \to F \cap B(x,r)$ for which 
  \begin{equation} \label{eq:biLipschitz}
    D^{-1}r|y-z| \le |g(y)-g(z)| \le Dr|y-z|
  \end{equation}
  for all $y,z \in F$. 
  Then there exists a constant $C \ge 1$ such that
  \begin{equation*}
    \HH^s(F \cap B(x,r)) \le Cr^s
  \end{equation*}
  for all $x \in \R^d$ and $r>0$, and
  \begin{equation*}
    \HH^s(F\cap A) \le C\HH^s_\infty(F \cap A)
  \end{equation*}
  for all $A \subset \R^d$.
\end{theorem}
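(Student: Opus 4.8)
The plan is to reduce the whole theorem to the single estimate $\HH^s(F\cap B(x,r))\le Cr^s$ and then to deduce the content comparison almost for free. The first displayed inequality is exactly this estimate, and for the second I would take a near-optimal cover $\{U_i\}$ of $F\cap A$ with $\sum_i\diam(U_i)^s\le 2\HH^s_\infty(F\cap A)$; choosing $x_i\in F\cap U_i$ gives $F\cap U_i\subset F\cap B(x_i,\diam(U_i))$, so by subadditivity and the ball estimate
\[
  \HH^s(F\cap A)\le\sum_i\HH^s(F\cap U_i)\le C\sum_i\diam(U_i)^s\le 2C\,\HH^s_\infty(F\cap A).
\]
(The ball estimate for $x\notin F$ follows from the case $x\in F$ by moving the centre a little or noting the intersection is empty.) Thus everything rests on upper Ahlfors regularity; and since $\HH^s(F)=0$ forces $\HH^s_\infty(F)=0$ by the remark in the excerpt, making both claims trivial, I may assume $m:=\HH^s_\infty(F)>0$.

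The easy half is the role of the maps $g$ as lower bounds. As $g$ is bi-Lipschitz with constants $D^{-1}r$ and $Dr$, it scales Hausdorff measure and content: $(D^{-1}r)^s\HH^s(S)\le\HH^s(g(S))\le (Dr)^s\HH^s(S)$, and likewise for $\HH^s_\infty$. Applying this to $S=F$ and using $g(F)\subset F\cap B(x,r)$ yields the lower regularity $\HH^s_\infty(F\cap B(x,r))\ge(D^{-1}r)^s m$ and $\HH^s(F\cap B(x,r))\ge(D^{-1}r)^s\HH^s(F)$ for all $x\in F$ and $0<r\le\diam(F)$. Consequently, \emph{once} we know $\HH^s(F)<\infty$, the measure $\mu=\HH^s|_F$ is lower regular with $\mu(B(x,r))\ge c_0r^s$, $c_0=D^{-s}\HH^s(F)$, and a standard $5r$-covering argument turns lower regularity of a finite measure into the global bound: covering $F$ by enlarged disjoint balls $B(x_i,\rho_i)$ centred in $F$ with $\rho_i\le\delta$ gives $\HH^s_{10\delta}(F)\le 10^s\sum_i\rho_i^s\le 10^s c_0^{-1}\sum_i\mu(B(x_i,\rho_i))\le 10^s c_0^{-1}\mu(F)=(10D)^s$, so $\HH^s(F)\le C\,m$.

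The hard part, and the main obstacle, is to establish $\HH^s(F)<\infty$ (equivalently, to produce a finite lower-regular measure on $F$) without circularity: the constant $c_0$ above itself contains $\HH^s(F)$ and is vacuous if $\HH^s(F)=\infty$, while the abundance of copies by itself controls only \emph{ambient} packing numbers and not the $s$-dimensional ones, so it does not bound $\overline{\dim}_M F$ directly. I would break the circularity by a compactness/weak-tangent argument: exploiting that copies of $F$ sit in every ball at every scale, extract a weak-$*$ limit of suitably rescaled and normalised measures to obtain a finite measure that is lower regular \emph{by construction}, and then run the covering estimate $\HH^s_{2r}(F)\le N(r)(2r)^s$, with the covering number $N(r)$ controlled through disjointness of packing balls tested against this finite measure, to conclude $\HH^s(F)\le C\,m<\infty$.

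Finally, with finiteness in hand, I would upgrade to a uniform constant and obtain the localised ball estimate. The density theorem for the finite measure $\mu=\HH^s|_F$ gives $\limsup_{r\downarrow0}\mu(B(x,r))/(2r)^s\le1$ for $\mu$-almost every $x$, hence a good upper-density bound at $\mu$-typical points. Because the maps $g$ make $F$ homogeneous—every ball carries a bi-Lipschitz copy of the whole set—I would propagate this almost-everywhere bound to \emph{every} $x\in F$ and every scale, transporting a typical good point into an arbitrary ball and controlling distortion by $D$, and then feed the propagated upper-density bound back into the $5r$-covering argument of the second paragraph to obtain $\HH^s(F\cap B(x,r))\le Cr^s$. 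I expect the construction of the finite lower-regular measure (i.e.\ finiteness of $\HH^s(F)$) and the uniformisation of the density bound via the copies to be the two steps demanding the most work.
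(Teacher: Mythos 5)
Your reductions are sound as far as they go: deducing the second claim from the ball estimate by covering, the lower regularity $\HH^s(F\cap B(x,r))\ge D^{-s}r^s\HH^s(F)$ from the maps $g$, and the $5r$-covering bound once finiteness is known all match what the paper does (its Lemma \ref{thm:falconer}). But you have misidentified where the difficulty lies. Finiteness of $\HH^s(F)$ is not the hard part: it follows from the \emph{lower} bound in \eqref{eq:biLipschitz} alone via Falconer's implicit theorem \cite[Theorem 3.2]{Falconer1997}, which gives $N_r(F)\le D^s r^{-s}$ and hence $\HH^s(F)<(2D)^s$ directly; the paper simply cites this. Your proposed substitute---a weak-$*$ limit of ``suitably rescaled and normalised measures''---is left entirely unexecuted (you never say which measures are rescaled, and the natural candidates, such as counting measures on $r$-nets, require exactly the counting bounds in question), but this gap is repairable by the citation.

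The genuine gap is your final step, the localised estimate $\HH^s(F\cap B(x,r))\le Cr^s$, which is the actual content of the theorem. The density-propagation plan fails for a structural reason: the hypothesis supplies maps \emph{from} $F$ \emph{into} balls, not maps from balls onto $F$, so transport only moves mass toward a typical point, never spreads a ball's mass over $F$. Concretely, take $y$ with $\mu(B(y,\rho))\le(1+\eps)(2\rho)^s$ for small $\rho$, where $\mu=\HH^s|_F$, and let $g\colon F\to F\cap B(y,\rho)$. Then $g(F\cap B(x,r))\subset B(y,\rho)$ gives $\HH^s(F\cap B(x,r))\le D^s\rho^{-s}\mu(B(y,\rho))\le(1+\eps)(2D)^s$ --- a bound \emph{independent of} $r$, i.e.\ nothing beyond finiteness. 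The copy of $F\cap B(x,r)$ has diameter $\sim\rho r$ but is centred at $g(x)$, which is only within distance $\rho$ of $y$; the density information at $y$ at the relevant scale $\rho r$ is therefore inaccessible, and iterating the maps does not shrink this drift. (Moreover, even an everywhere $\limsup$ density bound would not yield the inequality at \emph{all} scales $r$, and the a.e.\ density theorem cannot be upgraded to every point here, since one heavy ball produces only a dense, possibly null, set of high-density points.) The paper's proof supplies the mechanism you are missing: assuming some ball satisfies $\HH^s(F\cap B(x_0,r_0))>Cr_0^s$, it places a disjoint scaled copy $g_B(F\cap B(x_0,r_0))$ in each ball of a maximal packing $\BB_n$ of $F$ at scale $2^{-n}$; the \emph{upper} bound in \eqref{eq:biLipschitz} forces each copy to have diameter at most $\delta_n=2D2^{-n}r_0$, so its $\delta_n$-content is at most half its measure, and summing over $\#\BB_n\asymp 2^{ns}$ copies yields $\HH^s(F)-\HH^s_{\delta_n}(F)\ge 2^{2s}D^{2s}r_0^s>0$ uniformly in $n$, contradicting $\HH^s_{\delta_n}(F)\to\HH^s(F)<\infty$. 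Note that your proposal never uses the upper half of \eqref{eq:biLipschitz} in any essential way, whereas the paper stresses it is crucial precisely at this point; without an argument of this kind the localisation $\le Cr^s$ remains unproved.
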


Observe that the assumptions of Theorem~\ref{thm:mainTheoremQuasi} are stronger than
those that define quasi self-similar sets; see \cite{Falconer1989} and \cite[\S
3.1]{Falconer1997}.
Quasi self-similar sets differ to the sets we consider by only requiring the lower bound in 
\eqref{eq:biLipschitz} to hold. The upper bound is crucial in \eqref{eq:copy-of-B-diam} and it
seems unlikely that our assumptions are satisfied by quasi self-similarity alone.

The following result is a straightforward corollary of Theorem \ref{thm:mainTheoremQuasi}.
We say
that a set $A \subset \R^d$ is \emph{Ahlfors $s$-regular} if there exists a Radon measure $\mu$ supported on $A$ and a constant $C \ge 1$ such that
\begin{equation} \label{eq:ahlfors}
  C^{-1}r^s \le \mu(B(x,r)) \le Cr^s
\end{equation}
for all $x \in A$ and $0<r<\diam(A)$. 

\begin{proposition}\label{thm:AhlforsQuasi}
  Let $F \subset \R^d$ be a set satisfying the assumptions of Theorem~\ref{thm:mainTheoremQuasi}. If
  $s=\dimh(F)$, then $\HH^s(F)>0$ if and only if $F$ is Ahlfors $s$-regular.
\end{proposition}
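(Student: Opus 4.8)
The plan is to exhibit the restriction $\mu = \HH^s|_F$ of the $s$-dimensional Hausdorff measure to $F$ as the required Ahlfors regular measure. First I would record that $\mu$ is a genuine Radon measure: applying Theorem~\ref{thm:mainTheoremQuasi} with any $x \in F$ and $r = \diam(F)$ gives $\HH^s(F) = \HH^s(F \cap B(x,\diam(F))) \le C\diam(F)^s < \infty$, so $\mu$ is a finite Borel measure supported on the compact set $F$. The equivalence then splits into two implications, only one of which requires the theorem.

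For the direction that Ahlfors $s$-regularity implies $\HH^s(F) > 0$, I would invoke the mass distribution principle. If $\nu$ is a Radon measure on $F$ satisfying \eqref{eq:ahlfors}, then the upper bound $\nu(B(x,r)) \le Cr^s$ forces $\HH^s(U) \ge \nu(U) \cdot \text{const}$ in the usual way, giving $\HH^s(F) \ge C^{-1}\nu(F)$; since the lower bound in \eqref{eq:ahlfors} yields $\nu(F) \ge \nu(B(x_0,\tfrac12\diam F)) \ge C^{-1}(\tfrac12\diam F)^s > 0$ when $\diam F>0$ (the single-point case being trivial as $\HH^0(F)=1$), we conclude $\HH^s(F) > 0$. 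Note that only the upper bound of \eqref{eq:ahlfors} enters here.

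For the converse, assume $\HH^s(F) > 0$ and check that $\mu = \HH^s|_F$ satisfies \eqref{eq:ahlfors} for $x \in F$ and $0<r<\diam(F)$. The upper bound $\mu(B(x,r)) = \HH^s(F \cap B(x,r)) \le Cr^s$ is precisely the first conclusion of Theorem~\ref{thm:mainTheoremQuasi}. For the lower bound I would use the contraction map: fix such $x,r$ and take the map $g \colon F \to F \cap B(x,r)$ from the hypothesis. The estimate \eqref{eq:biLipschitz} shows that the inverse $g^{-1} \colon g(F) \to F$ is $(D/r)$-Lipschitz, so from the scaling property $\HH^s(f(E)) \le L^s\HH^s(E)$ of Hausdorff measure under an $L$-Lipschitz map $f$ (applied to $f = g^{-1}$) I obtain
\begin{equation*}
  \HH^s(F \cap B(x,r)) \ge \HH^s(g(F)) \ge D^{-s} r^s \HH^s(F).
\end{equation*}
Since $\HH^s(F) > 0$, this is the desired bound $\mu(B(x,r)) \ge C^{-1}r^s$ with $C^{-1} = D^{-s}\HH^s(F)$; enlarging the constant if necessary makes both inequalities hold with a single $C$, so $F$ is Ahlfors $s$-regular.

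The statement is essentially a direct consequence of Theorem~\ref{thm:mainTheoremQuasi}, so the only genuine computation is the lower Ahlfors bound. I expect the main (and rather minor) obstacle to be the bookkeeping of the measure-scaling under the nonlinear map $g$: one must read the scaling of $\HH^s$ off the bi-Lipschitz behaviour of $g$ between the metric subspaces $F$ and $g(F) \subset F \cap B(x,r)$, since \eqref{eq:biLipschitz} controls $g$ only on $F$ and there is no global derivative to appeal to. Everything else — finiteness and Radon-ness of $\mu$, and the easy implication via the mass distribution principle — is standard.
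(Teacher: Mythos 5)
Your proposal is correct and follows essentially the same route as the paper: the mass distribution principle for the easy direction, and for the converse the measure $\HH^s|_F$ made Ahlfors regular via the upper bound from Theorem~\ref{thm:mainTheoremQuasi} and the lower bound $\HH^s(F \cap B(x,r)) \ge \HH^s(g(F)) \ge D^{-s}\HH^s(F)r^s$ coming from the $(D/r)$-Lipschitz inverse of $g$, exactly as in the paper's proof (which you merely make more explicit, along with the degenerate single-point case). No gaps.
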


\begin{proof}
  Assuming $F$ to be Ahlfors $s$-regular, let $\mu$ be a measure satisfying \eqref{eq:ahlfors}. Since $\mu(F) \le \sum_i \mu(U_i) \le C\sum_i \diam(U_i)^s$ for all $\delta$-covers $\{U_i\}_i$ of $F$, we get $\HH^s_\delta(F) \ge \mu(F) > 0$ for all $\delta > 0$ and, consequently, $\HH^s(F)>0$.
  To show the necessity of the Ahlfors regularity, suppose that $\HH^s(F)>0$. By Theorem
  \ref{thm:mainTheoremQuasi}, there is a constant $C \ge 1$ such that
  \begin{equation} \label{eq:AhlforsQuasi-finite}
    \HH^s|_F(B(x,r)) \le C r^s
  \end{equation}
  for all $x \in F$ and $r>0$. For each $x \in F$ and $0<r<\diam(F)$, let $g_{x,r} \colon F \to F \cap B(x,r)$ be as in \eqref{eq:biLipschitz}. The existence of such mappings implies
  \begin{equation*}
    \HH^s|_F(B(x,r))\geq \HH^s(g_{x,r}(F))\geq D^{-s} \HH^s(F) r^{s}
  \end{equation*}
  for all $x \in F$ and $0<r<\diam(F)$. Recalling that $F$ is compact, it follows from \eqref{eq:AhlforsQuasi-finite} that
  $\HH^s(F)<\infty$ and $\HH^s|_F$ is therefore a Radon measure. We have thus finished the proof.
\end{proof}

\section{Self-conformal sets} \label{sec:self-conformal}

Let $N\geq 2$ and consider the family of $N$ contractions $\{\fii_1,\ldots,\fii_N\}$ on $\R^d$. We
call this family an \emph{iterated function system}. If all the mappings
$\fii_i \colon \R^d\to\R^d$ are strict contractions, then there exists a unique non-empty compact
set $F$, called the \emph{attractor} of the iterated function system, satisfying
\[
  F= \bigcup_{i=1}^N \fii_i(F).
\]
When all the mappings $\fii_i$ are similarities the attractor is known as a \emph{self-similar set}.
In this paper, we consider the larger class of iterated function systems where all the mappings are
conformal contractions and in this case, we refer to $F$ as a \emph{self-conformal set}. 

Let us next give a precise definition for a conformal iterated function system. Fix an open set $\Omega \subset \R^d$.
A $C^1$-mapping $\fii \colon \Omega \to \R^d$ is \emph{conformal} if
the differential $\fii'(x) \colon \R^d \to \R^d$ is a similarity, i.e.\ satisfies $|\fii'(x)y| =
|\fii'(x)||y| \ne 0$ for all $x \in \Omega$ and $y \in \R^d \setminus \{0\}$ and, as a function of $x$, is H\"older continuous, i.e.\ there exist $\alpha,c>0$ such that
\begin{equation} \label{eq:holder-derivatives}
  |\fii'(x) - \fii'(y)| \le c|x-y|^\alpha
\end{equation}
for all $x,y \in \Omega$. For $d \ge 2$, the H\"older continuity follows from the similarity of the differential and injectivity. In fact, conformal mappings in the plane correspond to the holomorphic
functions on $\mathbb{C}$ with non-zero derivative on their respective domain, and in higher
dimensions, by Liouville's theorem, conformal mappings are either homotheties, isometries, or
compositions of reflections and inversions of a sphere. In the one dimensional case, conformal
mappings are simply the $C^{1+\alpha}$-functions with non-vanishing derivative. We say that $\{\fii_i\}_{i=1}^N$ is a \emph{conformal iterated function system} if each $\fii_i$ is an injective conformal mapping on a bounded open
convex set $\Omega$ such that $\overline{\fii_i(\Omega)} \subset \Omega$ and $\|\fii_i'\| := \sup_{x \in \Omega}|\fii_i'(x)| < 1$. There exists a compact set $X \subset \Omega$ such that $\bigcup_{i=1}^N \fii_i(X) \subset X$, which guarantees the existence of the self-conformal set; for details, see Lemma \ref{thm:semiconformal}. Self-conformal sets are a natural generalisation of self-similar sets.

In Section \ref{sec:proof2}, we shall verify that self-conformal sets satisfy the assumptions of
Theorem~\ref{thm:mainTheoremQuasi}. We thus obtain the following result as an immediate corollary of
Theorem \ref{thm:mainTheoremQuasi} and Proposition \ref{thm:AhlforsQuasi}.

\begin{theorem} \label{thm:combinedConformal}
  Let $F \subset \R^d$ be a self-conformal set and $s=\dimh(F)$. Then there exists a constant $C \ge 1$ such that
  \begin{equation*}
    \HH^s(F \cap A) \le C\HH^s_\infty(F\cap A)
  \end{equation*}
  for all $A \subset \R^d$. Furthermore, $\HH^s(F)>0$ if and only if $F$ is Ahlfors $s$-regular.
\end{theorem}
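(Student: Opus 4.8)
The plan is to derive the theorem directly from Theorem~\ref{thm:mainTheoremQuasi} and Proposition~\ref{thm:AhlforsQuasi}. Both the inequality $\HH^s(F \cap A) \le C\HH^s_\infty(F \cap A)$ and the equivalence of $\HH^s(F)>0$ with Ahlfors $s$-regularity are immediate from those two results, so the entire content of the proof is the verification that a self-conformal set $F$ satisfies the hypotheses of Theorem~\ref{thm:mainTheoremQuasi}. Non-emptiness and compactness of $F$ are guaranteed by the construction through the invariant compact set $X$ (Lemma~\ref{thm:semiconformal}). Thus the task reduces to producing, for every $x \in F$ and $0<r\le\diam(F)$, a map $g\colon F \to F\cap B(x,r)$ obeying the two-sided estimate \eqref{eq:biLipschitz} with a single constant $D$; my candidate is a composition $g = \fii_\iii|_F$ for a suitably chosen finite word $\iii$.

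The key analytic input is the principle of bounded distortion: there is a constant $K\ge 1$ such that $K^{-1} \le |\fii_\iii'(w)|/|\fii_\iii'(w')| \le K$ for every finite word $\iii$ and all $w,w' \in X$. This follows from the H\"older continuity \eqref{eq:holder-derivatives} together with the geometric decay of $\diam(\fii_{i_1\cdots i_k}(X))$: applying the chain rule to the composition and telescoping the resulting H\"older errors produces a convergent geometric series, so the distortion remains uniformly bounded in the length of $\iii$. Bounded distortion in turn yields the quasi-multiplicativity $\|\fii_{\iii\jjj}'\| \asymp \|\fii_\iii'\|\,\|\fii_\jjj'\|$ and, because $\Omega$ is convex, integrating $\fii_\iii'$ along the segment joining two points of $F$ upgrades it to the comparability $|\fii_\iii(y)-\fii_\iii(z)| \asymp \|\fii_\iii'\|\,|y-z|$ for all $y,z\in F$.

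To build $g$, I fix a coding $x_1x_2\cdots$ of the centre $x$, so that $\{x\}=\bigcap_n \fii_{x_1\cdots x_n}(X)$ and hence $x \in \fii_\iii(F)$ for every prefix $\iii = x_1\cdots x_n$. Since the ratios $\|\fii_i'\|$ lie in a fixed subinterval of $(0,1)$ --- the lower bound following from conformality and the compactness of $X$ --- I truncate the coding at the first level $n$ at which $\|\fii_\iii'\|$ falls below a fixed multiple of $r$; quasi-multiplicativity then forces $\|\fii_\iii'\| \asymp r$. For such a word, $\diam(\fii_\iii(F)) \lesssim \|\fii_\iii'\|\,\diam(X) \lesssim r$, and as $x \in \fii_\iii(F)$, choosing the fixed multiple small enough guarantees $\fii_\iii(F) \subset B(x,r)$; since $\fii_\iii(F)\subseteq F$ by invariance, in fact $\fii_\iii(F) \subset F\cap B(x,r)$. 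The comparability from the previous paragraph then gives $D^{-1}r\,|y-z| \le |g(y)-g(z)| \le Dr\,|y-z|$ for all $y,z\in F$ with $D$ independent of $x$ and $r$, which is precisely \eqref{eq:biLipschitz}.

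I expect the main obstacle to be the passage from bounded distortion to the two-sided bi-Lipschitz comparability, specifically the lower bound. The upper bound $|\fii_\iii(y)-\fii_\iii(z)| \le K\|\fii_\iii'\|\,|y-z|$ is a direct integration of $\fii_\iii'$ over the segment, but bounding $|\fii_\iii(y)-\fii_\iii(z)|$ from below requires controlling the variation of the orthogonal part of the conformal derivative $\fii_\iii'(w)$ along that segment, so that the similarities $\fii_\iii'(w)$ do not cancel upon integration. This is where the similarity structure of the differential and the H\"older estimate \eqref{eq:holder-derivatives} must be combined with care; in the one-dimensional setting the difficulty evaporates, since $\fii_\iii'$ is then scalar and the lower bound is as immediate as the upper one.
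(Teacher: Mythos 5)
Your overall architecture coincides with the paper's own proof: deduce the theorem from Theorem~\ref{thm:mainTheoremQuasi} and Proposition~\ref{thm:AhlforsQuasi}, and verify the hypothesis \eqref{eq:biLipschitz} by taking $g=\fii_{\iii|_n}|_F$ along a coding of $x$ stopped at scale $r$. Your stopping rule (first $n$ with $\|\fii_{\iii|_n}'\|$ below a fixed multiple of $r$) is equivalent, up to constants, to the paper's containment criterion ($\fii_{\iii|_n}(F)\subset B(x,r)$ but $\fii_{\iii|_{n-1}}(F)\setminus B(x,r)\ne\emptyset$) via \eqref{eq:semiconformal-diam} and quasi-multiplicativity \eqref{eq:semiconformal-multi}, and your telescoping derivation of bounded distortion is the standard argument behind \eqref{eq:BDP} (it reappears, in expanded form, in Lemma~\ref{thm:derivatives-holder}). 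So the route is the same; the question is whether every step is actually supplied.

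The one genuine gap is precisely the step you flag and then leave open: the lower bound in $|\fii_\iii(y)-\fii_\iii(z)|\asymp\|\fii_\iii'\|\,|y-z|$ when $d\ge 2$. Your proposed route --- controlling the rotational part of the similarity $\fii_\iii'(w)$ along the segment so that the integrands do not cancel --- is not carried out, and it is genuinely delicate: the H\"older estimate \eqref{eq:holder-derivatives} controls the oscillation of $\fii_\iii'$ only at small scales, whereas along a segment of length comparable to $\diam(F)$ the accumulated rotation need not be small, so a direct integration of $\fii_\iii'$ does not obviously yield a lower bound. The paper's device in Lemma~\ref{thm:semiconformal} sidesteps cancellation entirely: apply the fundamental theorem of calculus to the \emph{inverse} map, integrating $(\fii_\iii^{-1})'$ along the straight segment $[\fii_\iii(x),z]$ in the image; this converts the desired lower bound into an upper bound for $\fii_\iii^{-1}$, and conformality together with bounded distortion gives $\inf_w|(\fii_\iii^{-1})'(w)|^{-1}=\inf_w|\fii_\iii'(w)|\ge K_0^{-1}\|\fii_\iii'\|$. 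The remaining complication is that $[\fii_\iii(x),\fii_\iii(y)]$ may leave $\fii_\iii(\Omega)$; this is handled by the buffer estimate \eqref{eq:conv-dist}, choosing $z$ near $\partial\fii_\iii(\Omega)$ with $|x-\fii_\iii^{-1}(z)|>d$, at the cost of the harmless factor $\min\{1,d/\diam(V)\}$ in \eqref{eq:lower-bound-for-6.1}. With that inverse-map step supplied (and the trivial one-point case dismissed, as in the paper), your argument closes and is exactly the paper's.
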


The above theorem extends to graph-directed and sub self-conformal sets in a straightforward manner; see
Remark \ref{rem:graph-directed}. It is pointed out in \cite[\S 4]{FarkasFraser2015} that the
constant $C$ above cannot be chosen to be $1$. We may thus consider that the theorem generalises the
results of Farkas and Fraser
\cite[Theorem 2.1 and Corollary 3.1]{FarkasFraser2015} on graph-directed self-similar sets; see also
\cite[discussion after Proposition 1.11]{Farkas2016}. It is also worthwhile to emphasize that the
method of Farkas and Fraser cannot be applied to prove Theorem \ref{thm:combinedConformal}: their
proof relied on an abstract lemma on measurable hulls which can only be applied if the measure and
content of the whole set are equal.

To exhibit further results, let us introduce more definitions and notation. Let $\{\fii_i\}_{i=1}^N$
be a conformal iterated function system and $F$ be the associated self-conformal set. We use the
convention that whenever we speak about a self-conformal set $F$, then it is automatically
accompanied with a conformal iterated function system which defines it. Let $\Sigma = \{ 1,\ldots,N
\}^\N$ be the collection of all infinite words constructed from integers $\{ 1,\ldots,N \}$. If
$\iii = i_1i_2\cdots \in \Sigma$, then we define $\iii|_n = i_1 \cdots i_n$ for all $n \in \N$. The
empty word $\iii|_0$ is denoted by $\varnothing$. Observe that $\Sigma_* = \bigcup_{n=0}^\infty
\Sigma_n$, where $\Sigma_n = \{\iii|_n : \iii \in \Sigma\}$ for all $n \in \N$, is the free monoid
on $\Sigma_1 = \{ 1,\ldots,N \}$. If $n \in \N$ and $\iii = i_1 \cdots i_n \in \Sigma_n$, then we
write $\fii_\iii = \fii_{i_1} \circ \cdots \circ \fii_{i_n}$. For $\iii \in \Sigma_* \setminus
\{\varnothing\}$ we set $\iii^- = \iii|_{|\iii|-1}$, where $|\iii|$ is the length of $\iii$.

We say that $F$ satisfies the \emph{weak separation condition} if
\begin{equation*}
  \sup \{\#\Phi(x,r) : x \in F \text{ and } r>0 \} < \infty,
\end{equation*}
where
\begin{equation*}
  \Phi(x,r) = \{ \fii_\iii|_F : \diam(\fii_\iii(F)) \le r < \diam(\fii_{\iii^-}(F)) \text{ and } \fii_\iii(F) \cap B(x,r) \ne \emptyset \}
\end{equation*}
for all $x \in \R^d$ and $r>0$. Furthermore, we say that $F$ satisfies the \emph{identity limit criterion} if
\begin{equation*}
  \inf\{ \|\fii_\iii'\|^{-1} \sup_{x \in F}|\fii_\iii(x)-\fii_\jjj(x)| : \iii,\jjj \in \Sigma_* \text{ such that } \fii_\iii|_F \ne \fii_\jjj|_F \} > 0.
\end{equation*}
The weak separation condition for self-conformal sets was introduced by Lau, Ngai, and Wang
\cite{LauNgaiWang2009}. Our definition is strictly weaker than the original one; see Example
\ref{ex:lau-ngai-wang-ex}. This modification was needed to be able to find a definition for the
identity limit criterion equivalent to the weak separation condition.
The following result is proved in Section \ref{sec:wsc-ilc}.

\begin{theorem} \label{thm:wsc-ilc}
  Let $F \subset \R^d$ be a self-conformal set containing at least two points. Then $F$ satisfies
  the weak separation condition if and only if it satisfies the identity limit criterion.
\end{theorem}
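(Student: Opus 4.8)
The plan is to prove the equivalence through its two contrapositives: that failure of the weak separation condition forces failure of the identity limit criterion, and conversely. Two standard features of the conformal setup will be used throughout. The Hölder condition \eqref{eq:holder-derivatives} yields uniform bounded distortion: there is a constant $L\ge 1$, independent of the word, with $L^{-1}\|\fii_\iii'\|\le|\fii_\iii'(x)|\le\|\fii_\iii'\|$ for $x\in\Omega$, whence $\diam(\fii_\iii(F))$ is comparable to $\|\fii_\iii'\|$, the norms are submultiplicative up to $L$, and each $\fii_\iii$ is lower Lipschitz with constant $L^{-1}\|\fii_\iii'\|$. Crucially, since $\Omega$ is convex, the mean value inequality gives the \emph{clean} upper bound $|\fii_\iii(y)-\fii_\iii(z)|\le\|\fii_\iii'\|\,|y-z|$ with no distortion loss; this lossless upper Lipschitz constant is what keeps the estimates below from degrading under iterated composition.

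For the direction that failure of the weak separation condition implies failure of the identity limit criterion, suppose there are $x_k\in F$ and $r_k>0$ with $\#\Phi(x_k,r_k)\to\infty$. Each $\fii_\iii|_F\in\Phi(x_k,r_k)$ satisfies $\diam(\fii_\iii(F))\le r_k$ and meets $B(x_k,r_k)$, so $\fii_\iii(F)\subset B(x_k,2r_k)$, while the cutoff condition together with bounded distortion and the positive lower bound on the single-letter derivatives forces $\|\fii_\iii'\|$ to be comparable to $r_k$ with constants independent of $k$. Normalising by $y\mapsto(y-x_k)/r_k$, the rescaled maps $\psi_\iii$ form a family of conformal maps $F\to B(0,2)$ whose derivative norms lie in a fixed compact subinterval of $(0,\infty)$ and whose derivatives enjoy a uniform Hölder bound inherited from the relative distortion estimate. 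By Arzelà--Ascoli this family is precompact in $C(F,\R^d)$, hence covered by finitely many $\eps$-balls in the supremum norm. Since $\#\Phi(x_k,r_k)\to\infty$, for large $k$ two distinct elements $\fii_\iii|_F\ne\fii_\jjj|_F$ of $\Phi(x_k,r_k)$ have $\sup_{x\in F}|\fii_\iii(x)-\fii_\jjj(x)|<\eps r_k$, so that $\|\fii_\iii'\|^{-1}\sup_{x\in F}|\fii_\iii(x)-\fii_\jjj(x)|\lesssim\eps$; letting $\eps\downarrow0$ makes the infimum in the identity limit criterion vanish.

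For the converse, assume the identity limit criterion fails. Lemma \ref{thm:close-maps} provides, for any prescribed relative distance, a pair of maps distinct on $F$ with $\|\fii_\iii'\|\asymp\|\fii_\jjj'\|$ and $\sup_{x\in F}|\fii_\iii(x)-\fii_\jjj(x)|\asymp\delta\|\fii_\iii'\|$, the two-sided comparison being essential. Starting from one such pair I build, by repeated left-composition with fresh close pairs, packets of maps of a common generation that are simultaneously clustered and pairwise distinct. Concretely I maintain two invariants for a packet $\mathcal G_m$ of common scale $S_m$: an upper bound $\le\eta_0$ and a lower bound $\ge\beta_m>0$ on all pairwise relative distances. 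Given $\mathcal G_m$, I pick a fresh pair $(\fii_\iii,\fii_\jjj)$ of relative distance $\delta_m$ and replace each $g\in\mathcal G_m$ by the two maps $\fii_\iii\circ g$ and $\fii_\jjj\circ g$, doubling the cardinality. Using the lossless upper Lipschitz constant, the new pairwise relative distances exceed the old ones by at most $\delta_m/S_m$, so choosing $\delta_m$ summably small keeps the upper invariant below $\eta_0$; using the lower Lipschitz constant and the two-sided lower bound from Lemma \ref{thm:close-maps}, the new lower invariant is at least $\min(c\,\delta_m/S_m,\;L^{-1}\beta_m-\delta_m/S_m)$, which stays strictly positive when $\delta_m<L^{-1}\beta_m S_m$. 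Balancing $\delta_m/S_m\approx\tfrac12 L^{-1}\beta_m$ gives $\beta_{m+1}\ge(c'/2L)\beta_m$, a geometric but strictly positive decay. After $n=\lceil\log_2 K\rceil$ steps one obtains $2^n\ge K$ maps that are pairwise distinct on $F$ (positive lower invariant) and whose images all lie in a single ball of radius comparable to their common scale $S_n$ (upper invariant); pigeonholing the boundedly many cutoff scale classes places a fixed positive proportion of them, hence at least $K/C_0$ distinct maps, in one set $\Phi(x,r)$. As $K$ is arbitrary, the weak separation condition fails.

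The main obstacle is this last construction, and specifically the simultaneous control of the two invariants. Left-composition shrinks the scale, so the relative pairwise distances threaten to grow; the lossless upper Lipschitz estimate coming from convexity of $\Omega$ is exactly what prevents the clustering from deteriorating and lets the accumulated increments form a convergent series. Dually, one must rule out the collapse of the $2^n$ maps to only a few distinct restrictions, which is the whole point in the presence of near-overlaps; tracking the lower invariant $\beta_m$—which decays geometrically through the distortion constant $L$ yet remains positive—is what guarantees genuine distinctness. Matching the common clustering scale to a single cutoff generation for $\Phi(x,r)$ is then routine pigeonholing.
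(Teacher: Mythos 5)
Your plan is correct in outline but takes a genuinely different route from the paper in both directions. For the direction ``WSC fails $\Rightarrow$ ILC fails'', you rescale the maps of $\Phi(x_k,r_k)$ and invoke Arzel\`a--Ascoli: the normalised family is uniformly bounded and equi-Lipschitz with uniformly H\"older derivatives (via \eqref{eq:semiconformal} and Lemma \ref{thm:derivatives-holder}), hence totally bounded in $C(F,\R^d)$, and a cardinality exceeding the size of an $\eps$-net forces two distinct restrictions within $\eps r_k$ in sup norm. The paper's Proposition \ref{thm:ILC-implies-WSC} is the same pigeonhole made finite and explicit: a maximal $\delta$-packing of $M$ points in $F$, a cover of $B(z,2r)$ by $L$ balls of radius $\delta r$, and counting the at most $L^M$ maps $\{1,\ldots,M\}\to\{1,\ldots,L\}$. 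Your version is softer; both hinge on the ingredient you correctly isolate, namely $\|\fii_\iii'\|\asymp r$ for members of $\Phi(x,r)$. For the converse, both arguments iterate Lemma \ref{thm:close-maps}, but combinatorially differently: the paper produces only $n$ words $\kkk_m=\iii_n\cdots\iii_{m+1}\jjj_m\iii_{m-1}\cdots\iii_1$, each a single substitution in a hub word, whereas you grow a full binary tree of $2^n$ maps by left-composition with fresh close pairs, maintaining a clustering invariant and a distinctness invariant. Your architecture is sound, and your observation that the mean-value bound $|\fii_\iii(y)-\fii_\iii(z)|\le\|\fii_\iii'\||y-z|$ is lossless (no distortion constant) is exactly the mechanism that also powers the paper's estimate \eqref{eq:WSC-ILC2}; the binary tree buys you $2^n$ maps per $n$ levels instead of $n$, at the cost of heavier bookkeeping.

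Three points need shoring up. First, your lower invariant must be tracked pointwise, $\inf_{x\in V}|g(x)-h(x)|>0$, not merely as a sup-norm distance on $F$: the endgame composes on the \emph{right} with suffix words to align cutoff generations, and sup-norm distinctness on $F$ does not survive right-composition, whereas pointwise separation on $V$ does. Lemma \ref{thm:close-maps} does provide bounds valid for all $x\in V$ and your triangle-inequality recursion preserves them, but you never say which notion you maintain; the paper states its bounds for all $x\in V$ precisely to obtain \eqref{eq:not-same-with-lll}. Second, your ``routine pigeonholing'' of cutoff classes hides the one genuinely conformal obstruction: $\diam(\fii_\kkk(F))=\diam(\fii_{\kkk^-}(F))$ can occur (Example \ref{ex:shortword}), so a packet word need not belong to \emph{any} $\Phi(x,r)$ at its own scale, and the windows $[\diam(\fii_\kkk(F)),\diam(\fii_{\kkk^-}(F)))$ may be empty or fail to share a common $r$. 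The repair is the paper's $\lll(q)$ device: append the minimal block making the diameter drop to $\le r$ (minimality restores the strict inequality) and pigeonhole over the boundedly many extension lengths, losing only a bounded factor. Third, you cannot ``balance'' $\delta_m/S_m\approx\tfrac12 L^{-1}\beta_m$, since Lemma \ref{thm:close-maps} only supplies \emph{some} $\delta<\eps$, possibly far smaller than intended; luckily only the upper constraint on $\delta_m$ is ever used, positivity of $\beta_{m+1}$ then coming from the $c\,\delta_m/S_m$ branch of your minimum. Relatedly, the per-step distortion factors ($K^2$ from \eqref{eq:semiconformal-multi}, plus the comparison of $\|\fii_\iii'\|$ with $\|\fii_\jjj'\|$) compound geometrically along the tree; this is harmless only because the target $n$ is fixed before the induction, so the $\delta_m$ may carry $n$-dependent safety margins -- exactly the role of the paper's requirement $\eps_k<(2K^5C^2)^{-1}\delta_{k-1}\|\fii_{\iii_{k-1}}'\|$. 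With these repairs your proof closes.
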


The weak separation condition provides us with a sufficient condition for the self-conformal set to
have positive measure. The identity limit criterion gives, at least in principle, a checkable
condition for the positivity.

\begin{proposition} \label{thm:wsc-positive}
  Let $F \subset \R^d$ be a self-conformal set satisfying the weak separation condition and $s=\dimh(F)$. Then $\HH^s(F)>0$.
\end{proposition}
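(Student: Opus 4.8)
The plan is to produce a nonzero finite Borel measure $\mu$ supported on $F$ satisfying a Frostman-type upper bound $\mu(B(x,r)) \le Cr^s$ for all $x$ and $r$; the mass distribution principle then yields $\HH^s(F) \ge \mu(F)/C > 0$. (Equivalently, by Theorem~\ref{thm:combinedConformal} it would suffice to bound $\HH^s_\infty(F)$ from below, and the same measure does this.) First I would record the bounded distortion estimates for conformal systems, namely $\diam(\fii_\iii(F)) \asymp \|\fii_\iii'\|$ with constants uniform in $\iii$, so that the stopping families $\Lambda_r = \{\iii : \diam(\fii_\iii(F)) \le r < \diam(\fii_{\iii^-}(F))\}$ consist of cylinders of diameter comparable to $r$. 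The weak separation condition then supplies a uniform $M$ with $\#\Phi(x,r) \le M$ for every $x$ and $r$.

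Next I would construct $\mu$ at the level of distinct maps rather than words. For a sequence of scales $r_n \to 0$ I would form the measures $\mu_n$ that distribute unit mass equally among the maps $\fii_\iii|_F$ with $\iii \in \Lambda_{r_n}$, where two words are identified when they induce the same restriction $\fii_\iii|_F$, and take $\mu$ to be a weak-$*$ limit. The upper bound is then the easy part: since every point of $F \cap B(x,r)$ lies in a stopping cylinder that meets $B(x,r)$ and whose restriction therefore belongs to $\Phi(x,r)$, one has $\mu(F \cap B(x,r)) \le \sum_{\fii_\iii|_F \in \Phi(x,r)} \mu(\fii_\iii(F)) \le M \max_{\iii \in \Lambda_r} \mu(\fii_\iii(F))$, so the Frostman bound reduces to the cylinder estimate $\mu(\fii_\iii(F)) \le C\diam(\fii_\iii(F))^s$, which holds once each stopping cylinder carries mass $\asymp r^s$.

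The hard part, and the real content of the proposition, will be producing such a nonzero $\mu$ at the critical exponent $s = \dimh(F)$: one must show that the number of genuinely distinct maps in $\Lambda_r$ grows like $r^{-s}$, so that the equidistributed mass per cylinder is comparable to $r^s$, and that this normalisation is consistent across scales so that a nonzero weak-$*$ limit survives with the matching lower cylinder bound. The naive self-conformal (Bernoulli) measure is useless here, because it is adapted to the separated exponent $P^{-1}(0)$ rather than to $\dimh(F)$, which may be strictly smaller when there are exact overlaps. This is exactly where the weak separation condition must be used: bounded multiplicity forces the words contributing to any fixed region to pile up by at most the factor $M$, which tames the subadditive pressure of the overlap-identified system and pins its critical exponent at $\dimh(F)$, giving $\sum_{\iii \in \Lambda_r} \diam(\fii_\iii(F))^s \asymp 1$ over distinct maps and hence both the upper and the matching lower estimates. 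I expect this count at the critical exponent to be the main obstacle; it adapts the argument of Lau, Ngai, and Wang to the present, weaker formulation of the weak separation condition, after which the distortion estimates and the $\#\Phi(x,r) \le M$ bound make the remainder routine.
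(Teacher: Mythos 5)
Your proposal is correct in outline and follows essentially the paper's own route: the paper proves this proposition simply by citing \cite[Propositions 3.8 and 3.5]{KaenmakiRossi2016} --- i.e.\ the Lau--Ngai--Wang counting argument, with the remark that it applies verbatim under the weaker definition of the weak separation condition used here --- and your sketch (stopping families $\Lambda_r$ with $\diam(\fii_\iii(F)) \asymp \|\fii_\iii'\|$, identification of words inducing the same restriction $\fii_\iii|_F$, equidistribution over distinct maps and a weak-$*$ limit, the Frostman bound via $\#\Phi(x,r) \le M$, and the mass distribution principle) is precisely the skeleton of that cited argument. The crux you flag, that the number of distinct maps at scale $r$ is comparable to $r^{-s}$ with constants uniform over all scales, is exactly the content of the cited propositions, and it can indeed be completed with tools already in the paper: the count of distinct restrictions is submultiplicative (composition descends to restrictions because $\fii_\jjj(F) \subset F$), so Fekete's lemma gives a uniform lower bound $\gtrsim r^{-t}$ at the limiting exponent $t$, while the weak separation condition gives the comparison with covering numbers, $f(r) \le M N_r(F) \le M D^s r^{-s}$ by Lemma \ref{thm:falconer}, which pins $t = s = \dimh(F)$ and supplies the matching upper bound.
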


The above result was observed first time by Lau, Ngai, and Wang \cite{LauNgaiWang2009}. Its proof
follows immediately from \cite[Propositions 3.8 and 3.5]{KaenmakiRossi2016}. We remark that
\cite[Proposition 3.8]{KaenmakiRossi2016} uses the original definition of Lau, Ngai, and Wang
\cite{LauNgaiWang2009} (see Example \ref{ex:lau-ngai-wang-ex}) but its proof applies verbatim also
with our definition of weak separation condition.

\section{Dimension drop conjecture} \label{sec:dim-drop}

The \emph{Assouad dimension} of a set $A \subset \R^d$, denoted by $\dima(A)$, is the infimum of all
$s$ satisfying the following: There exists a constant $C \ge 1$ such that each set $A \cap B(x,R)$
can be covered by at most $C(R/r)^s$ balls of radius $r$ centered at $A$ for all $0<r<R$. It is easy
to see that $\dimh(A) \le \dima(A)$ for all sets $A \subset \R^d$.

The proof of the following theorem is postponed until Section \ref{sec:not-wsc-assouad-one}.

\begin{theorem} \label{thm:not-wsc-assouad-one}
  Let $F \subset \R$ be a self-conformal set containing at least two points. If $F$ does not satisfy
  the identity limit criterion, then $\dima(F)=1$.
\end{theorem}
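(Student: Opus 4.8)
The plan is to show that the failure of the identity limit criterion forces the unit interval (up to affine normalisation) to appear as a weak tangent of $F$, whence $\dima(F) \ge \dimh([0,1]) = 1$, and since trivially $\dima(F) \le 1$ in $\R$, equality follows. The central tool is Lemma~\ref{thm:close-maps}, which—because the identity limit criterion fails—yields, for arbitrarily small $\delta>0$, two words $\iii \ne \jjj$ (in the sense that $\fii_\iii|_F \ne \fii_\jjj|_F$) with
\begin{equation*}
  \sup_{x \in F}|\fii_\iii(x)-\fii_\jjj(x)| \approx \delta\|\fii_\iii'\| \approx \delta\|\fii_\jjj'\|.
\end{equation*}
Two maps of comparable norm that differ by only $\delta$ times their norm produce two images $\fii_\iii(F)$, $\fii_\jjj(F)$ of roughly equal diameter whose separation is of order $\delta$ relative to that diameter; after rescaling $\fii_\iii(F)$ to unit size, the copy $\fii_\jjj(F)$ sits within relative distance $\sim\delta$. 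Since $F$ contains at least two points, each such rescaled copy already carries two points at a definite relative distance apart, so overlaying $\fii_\iii(F)$ and $\fii_\jjj(F)$ gives a configuration with more points spread through a bounded region than a single rescaled copy would.

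The heart of the argument is to iterate this construction. First I would fix a target: given any $K \in \N$ and any $\eps>0$, produce a word $\kkk \in \Sigma_*$ and a scale such that the rescaled set $\|\fii_\kkk'\|^{-1}(\fii_\kkk(F) - p)$ contains an $\eps$-dense collection of at least $K$ points in some interval of definite length. One builds this by repeatedly composing the near-coincident maps supplied by Lemma~\ref{thm:close-maps}: starting from a pair $\fii_\iii, \fii_\jjj$ at relative separation $\delta$, the bounded distortion of conformal maps (the Hölder bound \eqref{eq:holder-derivatives} together with $\overline{\fii_i(\Omega)} \subset \Omega$) guarantees that prepending and appending words preserves relative geometry up to a uniform multiplicative constant, so the two copies can be nested and translated to fill out an approximately arithmetic progression of points at the chosen relative scale. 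Making $\delta$ small at each stage and iterating $K$ times, the accumulated points become equidistributed through the relevant interval, exactly as in the self-similar argument of Fraser, Henderson, Olson, and Robinson \cite{FraserHendersonOlsonRobinson2015}.

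Finally I would pass to a weak tangent. The sequence of rescaled sets $\|\fii_{\kkk}'\|^{-1}(\fii_{\kkk}(F)-p_\kkk)$, being uniformly bounded, has a subsequence converging in the Hausdorff metric to a compact limit $T$; by the construction $T$ contains an interval (as the $\eps$-dense, $K$-point configurations converge, with $K \to \infty$ and $\eps \to 0$, to a full interval). Because $T$ is a weak tangent of $F$ and the Assouad dimension of $F$ bounds the Hausdorff dimension of every weak tangent from above, we conclude $\dima(F) \ge \dimh(T) \ge \dimh(\text{interval}) = 1$, and hence $\dima(F)=1$.

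The main obstacle is the iteration step in the conformal setting. In the self-similar case the similarities commute nicely with rescaling, so repeated overlaying produces exact arithmetic progressions; here the maps only satisfy bounded distortion, so I must control how the relative positions of the nested copies drift under composition and ensure that the distortion constants do not degrade as the number of iterations $K$ grows. The delicate point is choosing the scales $\delta$ decreasing fast enough—quantitatively against the uniform distortion constant coming from \eqref{eq:holder-derivatives} and the compact containment $\overline{\fii_i(\Omega)} \subset \Omega$—so that the cumulative error over $K$ stages stays below the target density $\eps$, thereby guaranteeing that the limit weak tangent genuinely contains a nondegenerate interval rather than merely a large but bounded point set.
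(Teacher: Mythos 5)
Your proposal follows essentially the same route as the paper's proof: Lemma~\ref{thm:close-maps} is applied inductively to produce, at arbitrarily small scales, $n$ points of $F$ with comparable and nearly equal gaps, so that $[-1,1]$ arises as a weak tangent and the weak-tangent bound yields $\dima(F)=1$. The obstacles you flag---orientation drift of the nested copies and choosing the successive $\delta$'s small against the distortion constants---are precisely what the paper resolves by squaring words to force positive derivatives and by inserting bridging words $\kkk_k$ that renormalise each displacement to a common relative scale $\eps$, so your plan is correct in outline and matches the published argument.
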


The above result, together with Theorem \ref{thm:wsc-ilc}, generalises the corresponding result of
Fraser, Henderson, Olson, and Robinson \cite[Theorem 3.1]{FraserHendersonOlsonRobinson2015} on
self-similar sets in the real line.

The following corollary generalises the corresponding result of Farkas and Fraser \cite[Corollary 3.2]{FarkasFraser2015} on self-similar sets.

\begin{corollary} \label{thm:main2}
  Let $F \subset \R$ be a self-conformal set containing at least two points such that $s = \dimh(F) < 1$. Then the following five conditions are equivalent:
  \begin{enumerate}
    \item $F$ satisfies the weak separation condition,
    \item $\HH^s(F)>0$,
    \item $F$ is Ahlfors $s$-regular,
    \item $\dima(F) = s$,
    \item $F$ satisfies the identity limit criterion.
  \end{enumerate}
\end{corollary}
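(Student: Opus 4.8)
The plan is to prove the five equivalences by establishing a cycle of implications, drawing on the machinery already set up in the excerpt. The natural cycle is $(1) \Rightarrow (2) \Rightarrow (3) \Rightarrow (4) \Rightarrow (5) \Rightarrow (1)$, since several of these arrows are essentially immediate from earlier results.

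\emph{The easy arrows.} First, $(1) \Rightarrow (2)$ is exactly Proposition \ref{thm:wsc-positive}: the weak separation condition forces $\HH^s(F)>0$. Next, $(2) \Rightarrow (3)$ follows directly from Theorem \ref{thm:combinedConformal}, whose second assertion states that for a self-conformal set with $s = \dimh(F)$ one has $\HH^s(F)>0$ if and only if $F$ is Ahlfors $s$-regular. For $(3) \Rightarrow (4)$, I would recall the general principle mentioned in the introduction that $\dimh(A) \le \dima(A)$ always holds, while for Ahlfors regular sets the Hausdorff and Assouad dimensions coincide; since $s = \dimh(F)$ by hypothesis, Ahlfors $s$-regularity gives $\dima(F) = s$. (If one prefers a self-contained argument, the Ahlfors measure $\mu$ with $C^{-1}r^s \le \mu(B(x,r)) \le Cr^s$ yields the covering bound in the definition of $\dima$ by a standard volume-counting argument.)

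\emph{The contrapositive arrows using the hypothesis $s<1$.} The arrow $(4) \Rightarrow (5)$ is where the assumption $\dimh(F) < 1$ enters and is, to my mind, the crux of the corollary. I would argue by contraposition: if $F$ does \emph{not} satisfy the identity limit criterion, then Theorem \ref{thm:not-wsc-assouad-one} gives $\dima(F) = 1$. But $s = \dimh(F) < 1$ by hypothesis, so $\dima(F) = 1 \ne s$, contradicting $(4)$. Hence $(4) \Rightarrow (5)$. Finally, $(5) \Rightarrow (1)$ is simply the reverse direction of the equivalence in Theorem \ref{thm:wsc-ilc}, which states that the identity limit criterion holds if and only if the weak separation condition holds (this requires $F$ to contain at least two points, which is part of our hypothesis). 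This closes the cycle and establishes all five equivalences.

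\emph{Main obstacle.} All five arrows are short once the correct earlier result is invoked, so there is no lengthy computation to carry out; the work has been front-loaded into Theorems \ref{thm:combinedConformal}, \ref{thm:wsc-ilc}, and \ref{thm:not-wsc-assouad-one}. The one point deserving care is the role of the strict inequality $s < 1$: it is used \emph{only} in the step $(4) \Rightarrow (5)$, where it allows the dichotomy ``either the identity limit criterion holds, or the Assouad dimension jumps all the way to $1$'' to separate $(4)$ from its failure. Without $s<1$ the implication $(4) \Rightarrow (5)$ could break down, since $\dima(F)=1=s$ would then be consistent with $(4)$. I would therefore flag explicitly that this hypothesis is what converts the dimension-drop phenomenon of Theorem \ref{thm:not-wsc-assouad-one} into the desired equivalence.
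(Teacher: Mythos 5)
Your proposal is correct and follows essentially the same route as the paper: the published proof establishes exactly the cycle $(1)\Rightarrow(2)$ via Proposition~\ref{thm:wsc-positive}, $(2)\Leftrightarrow(3)$ via Theorem~\ref{thm:combinedConformal}, $(3)\Rightarrow(4)$ as a near-triviality for Ahlfors regular sets, and $(4)\Rightarrow(5)\Rightarrow(1)$ via Theorems~\ref{thm:not-wsc-assouad-one} and~\ref{thm:wsc-ilc}. Your additional observation that the hypothesis $s<1$ is used only in the contrapositive step $(4)\Rightarrow(5)$ is accurate and a worthwhile clarification, though the paper leaves it implicit.
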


\begin{proof}
  The fact that (1) implies (2) follows Proposition \ref{thm:wsc-positive}. Theorem
  \ref{thm:combinedConformal} guarantees that (2) and (3) are equivalent. It is more or less a
  triviality that (3) implies (4); see, for example, \cite[\S 3]{KaenmakiLehrbackVuorinen2013}.
  Finally, Theorems \ref{thm:not-wsc-assouad-one} and \ref{thm:wsc-ilc} show that (4) implies (5)
  and (5) implies (1), respectively.
\end{proof}

A self-conformal set $F$ satisfies the \emph{open set condition} if there exists a non-empty open set
$U \subset \Omega$ such that $\fii_i(U) \subset U$ for all $i$ and $\fii_i(U) \cap \fii_j(U) =
\emptyset$ whenever $i \ne j$. Recall that, by \cite[Corollary 5.8 and Theorem 3.9]{KaenmakiVilppolainen2008}, the open set condition is equivalent to
\begin{equation*} 
  \sup \{\#\Sigma(x,r) : x \in F \text{ and } r>0 \} < \infty,
\end{equation*}
where
\begin{equation*}
  \Sigma(x,r) = \{ \iii \in \Sigma_* : \diam(\fii_\iii(F)) \le r < \diam(\fii_{\iii^-}(F)) \text{ and } \fii_\iii(F) \cap B(x,r) \ne \emptyset \}
\end{equation*}
for all $x \in \R^d$ and $r>0$. Therefore, the open set condition is stronger than the weak separation
condition. The \emph{pressure} $P \colon [0,\infty) \to \R$, defined by
\begin{equation*}
  P(s) = \lim_{n \to \infty} \tfrac{1}{n} \log \sum_{\iii \in \Sigma_n} \|\fii_\iii'\|^s,
\end{equation*}
is well-defined, convex, continuous, and strictly decreasing. In fact, there exists unique $s \ge 0$
for which $P(s)=0$. It is a classical result that if $F$ satisfies the open set condition, then
$\dimh(F)=P^{-1}(0)$; for the latest incarnation of this observation, see \cite[Proposition
3.5]{KaenmakiRossi2016}. 

We say that a self-conformal set $F$ has an \emph{exact overlap} if there exist $\iii, \jjj \in
\Sigma_*$ such that $\iii \ne \jjj$ and $\fii_\iii|_F = \fii_\jjj|_F$. Observe that if $F$ satisfies
the open set condition, then it cannot have exact overlaps. For a self-similar set $F$ in the
real line, according to a folklore ``dimension drop'' conjecture, $\dimh(F)=\min\{1,P^{-1}(0)\}$ or
otherwise 
there is an exact overlap. Hochman \cite[Corollary 1.2]{Hochman2014} has verified the conjecture
under a mild
assumption which is satisfied for example when the associated iterated function system is defined by
algebraic parameters; see \cite[Theorem 1.5]{Hochman2014}. To generalise Hochman's proof for
self-conformal sets in the real line seems difficult since the semigroup generated by $C^{1+\alpha}$
maps is simply too large: there is no invariant metric and dimension $d\in \N$ for which there is a
smooth injection to $\mathbb{R}^d$, which is bi-Lipschitz to its image in any compact neighbourhood of the identity.

However, the following theorem verifies the conjecture for self-conformal sets in the real line
having positive Hausdorff measure. It generalises the corresponding result of Farkas \cite[Corollary 3.13]{FarkasThesis} on self-similar sets.

\begin{theorem} \label{thm:main3}
  Let $F \subset \R$ be a self-conformal set with $\HH^s(F)>0$ for $s = \dimh(F) < 1$. Then $s = P^{-1}(0)$ if and only if there are no exact overlaps.
\end{theorem}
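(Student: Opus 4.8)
The plan is to prove both directions of the equivalence. The easy direction is that the existence of an exact overlap forces $s < P^{-1}(0)$, and the substantive direction is that $s = \dimh(F) < P^{-1}(0)$ forces an exact overlap. Throughout, the standing hypothesis $\HH^s(F) > 0$ for $s = \dimh(F) < 1$ is what unlocks the separation-condition machinery assembled in the earlier sections.

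**For the forward direction** (no exact overlaps $\Rightarrow s = P^{-1}(0)$), I would argue contrapositively and exploit the chain of equivalences in Corollary~\ref{thm:main2}. Since $\HH^s(F) > 0$ by hypothesis, condition~(2) of Corollary~\ref{thm:main2} holds, and hence \emph{all five} conditions hold; in particular $F$ satisfies the weak separation condition and is Ahlfors $s$-regular. The idea is that the weak separation condition differs from the open set condition \emph{only} by permitting exact overlaps: if there are no exact overlaps, then the bounded-multiplicity count in the definition of $\Phi(x,r)$ transfers to a bounded-multiplicity count of the words themselves, i.e.\ $\sup\{\#\Sigma(x,r)\} < \infty$, which is precisely the open set condition as recalled after Corollary~\ref{thm:main2}. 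Once the open set condition holds, the classical result $\dimh(F) = P^{-1}(0)$ (cited from \cite[Proposition 3.5]{KaenmakiRossi2016}) gives $s = P^{-1}(0)$, as desired. The only subtlety is justifying the passage from $\#\Phi < \infty$ to $\#\Sigma < \infty$: a priori many distinct words $\iii$ can give the same restriction $\fii_\iii|_F$, so one must check that the absence of exact overlaps makes the map $\iii \mapsto \fii_\iii|_F$ injective on each relevant generation, thereby collapsing the distinction between counting maps and counting words.

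**For the reverse direction** (exact overlap $\Rightarrow s < P^{-1}(0)$), suppose $\fii_\iii|_F = \fii_\jjj|_F$ with $\iii \ne \jjj$. The strategy is to pass to a large iterate of the system in which the overlap causes the pressure sum to overcount, decoupling the dimension from the pressure. Concretely, one builds a sub-system indexed by words that avoid exactly one of the redundant branches; because $\fii_\iii$ and $\fii_\jjj$ agree on $F$, the attractor of the reduced system is still $F$, so its dimension is unchanged, while the pressure equation for the reduced system yields a strictly smaller root. Since $\dimh(F) \le P^{-1}(0)$ always holds (as noted in the introduction) and $P$ is strictly decreasing, producing any valid covering system that omits a branch forces the critical value of the pressure strictly below the original $P^{-1}(0)$; hence $s = \dimh(F) < P^{-1}(0)$. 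Here one leans on the strict monotonicity and the uniqueness of the zero of $P$ established after Corollary~\ref{thm:main2}.

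**The main obstacle** I anticipate is the forward direction's bookkeeping, namely cleanly establishing that the weak separation condition together with the no-exact-overlap assumption upgrades to the open set condition. The definitions of $\Phi(x,r)$ and $\Sigma(x,r)$ differ only in whether one records the restriction $\fii_\iii|_F$ or the word $\iii$, so everything hinges on controlling how many words map to a common restriction. In the self-conformal setting one cannot invert maps freely, so I expect to invoke the identity limit criterion (available via Theorem~\ref{thm:wsc-ilc}, since all conditions of Corollary~\ref{thm:main2} hold) to bound, uniformly in scale, the number of words in a single generation that can coincide on $F$; absent exact overlaps this bound must in fact be the trivial one. Getting this uniform injectivity-up-to-bounded-multiplicity argument right, rather than the two applications of already-proved theorems, is where the real work lies.
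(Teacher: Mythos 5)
Your forward direction (no exact overlaps $\Rightarrow s = P^{-1}(0)$) is correct and essentially coincides with the paper's: Corollary~\ref{thm:main2} converts $\HH^s(F)>0$ into the weak separation condition, and then the absence of exact overlaps upgrades it to the open set condition, whence $s=P^{-1}(0)$. One remark: the ``subtlety'' you flag is vacuous with the paper's definitions. An exact overlap is \emph{by definition} a failure of injectivity of $\iii\mapsto\fii_\iii|_F$ on $\Sigma_*$, so no exact overlaps immediately gives $\#\Sigma(x,r)=\#\Phi(x,r)$ for every $x$ and $r$, and the equivalence of $\sup\#\Sigma(x,r)<\infty$ with the open set condition is the cited result from \cite{KaenmakiVilppolainen2008}; no identity-limit-criterion or bounded-multiplicity argument is needed. (The paper dispatches this step by citing \cite{DengNgai2011}.)

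Your reverse direction takes a genuinely different route from the paper and contains a real gap. The paper proves the contrapositive by citing Peres, Rams, Simon, and Solomyak \cite{PeresEtAl2001}: if $s=P^{-1}(0)$ and $\HH^s(F)>0$, then $F$ satisfies the open set condition, which forbids exact overlaps --- note that this is precisely where the standing hypothesis $\HH^s(F)>0$ is used in that direction. You instead attempt the unconditional ``delete a redundant branch'' argument. For self-similar systems this is exact algebra: at the critical exponent the level-$n$ sum equals $1$, so removing one word drops it strictly below $1$. For self-conformal systems, however, $\|\fii_\iii'\|$ is multiplicative only up to the distortion constant $K$ of Lemma~\ref{thm:semiconformal}, and at $s_0 = P^{-1}(0)$ sub- and supermultiplicativity only give $1 \le \sum_{\kkk\in\Sigma_n}\|\fii_\kkk'\|^{s_0} \le K^{2s_0}$; removing the single, exponentially small term $\|\fii_\jjj'\|^{s_0}$ need not push the sum below $1$, so your claim that ``the pressure equation for the reduced system yields a strictly smaller root'' does not follow from the strict monotonicity of $P$ in $s$, which is all you invoke. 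What is actually needed is the thermodynamic-formalism fact that the pressure strictly decreases on a proper closed invariant subsystem (via uniqueness and full support of the equilibrium/Gibbs state of the geometric potential), or else the paper's shortcut through \cite{PeresEtAl2001}. You would also need to handle bookkeeping your sketch omits: if $|\iii|\ne|\jjj|$, pass to $\iii\jjj$ and $\jjj\iii$, which have equal length and satisfy $\fii_{\iii\jjj}|_F=\fii_{\jjj\iii}|_F$; and in the degenerate case $\iii\jjj=\jjj\iii$ the words are powers of a common word, which forces $F$ to be a singleton, a case one checks separately.
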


\begin{proof}
  If $s = P^{-1}(0)$, then the assumption that $\HH^s(F)>0$ together with \cite[Theorem
  1.1]{PeresEtAl2001}, implies that $F$ satisfies the open set condition and hence, cannot have
  exact overlaps. If there are no exact overlaps, then, by Corollary \ref{thm:main2}, the assumption
  $\HH^s(F)>0$ implies that $F$ satisfies the weak separation condition. Therefore, by \cite[Theorem
  1.3]{DengNgai2011} (see also \cite[Remark 3.7(2)]{KaenmakiRossi2016}), the lack of exact overlaps
  implies the open set condition and we have $s = P^{-1}(0)$.
\end{proof}

\section{Proof of Theorem \ref{thm:mainTheoremQuasi}} \label{sec:proofMain}

For a bounded set $A \subset \R^d$ we let
\begin{equation*}
  N_r(A) = \min\left\{ k : A \subset \bigcup_{i=1}^k B(x_i,r) \text{ for some } x_1,\ldots,x_k \in
  \R^d \right\}
\end{equation*}
be the least number of balls of radius $r>0$ needed to cover $A$.

\begin{lemma} \label{thm:falconer}
  Let $F \subset \R^d$ be a set satisfying the assumptions of Theorem~\ref{thm:mainTheoremQuasi}. If
  $s=\dimh(F)$, then
  \begin{equation*}
    2^{-s}\HH^s_\infty(F) r^{-s} \le N_r(F) \le D^s r^{-s}
  \end{equation*}
  for all $r>0$ and $\HH^s(F)<(2D)^s$. In particular, $\HH^s(F)>0$ if and only if $0<\HH^s(F)<\infty$.
\end{lemma}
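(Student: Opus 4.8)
The plan is to establish the two displayed inequalities independently. The lower bound is immediate from the definition of Hausdorff content: if $B(x_1,r),\ldots,B(x_{N_r(F)},r)$ is a cover of $F$ by $N_r(F)$ balls of radius $r$, then these balls form an admissible cover of $F$ by sets of diameter at most $2r$, whence $\HH^s_\infty(F)\le \sum_{i=1}^{N_r(F)}(2r)^s=N_r(F)(2r)^s$, and rearranging gives $2^{-s}\HH^s_\infty(F)r^{-s}\le N_r(F)$. The content enters only here, and this half is trivial and does not use \eqref{eq:biLipschitz}.

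The upper bound $N_r(F)\le D^s r^{-s}$ is the substantive estimate and the main obstacle: it asserts that the box-counting behaviour of $F$ is controlled by its Hausdorff dimension, which for a general compact set fails and must come from the self-similar structure. I would deduce it from the lower inequality in \eqref{eq:biLipschitz} alone (the ``quasi self-similar from below'' part), following the idea behind Falconer's implicit theorem. Fix $0<r\le\diam(F)$ (the regime that feeds the consequences; for $r>\diam(F)$ one has $N_r(F)=1$), and let $\{x_1,\ldots,x_m\}\subseteq F$ be a maximal $r$-separated set, so that $N_r(F)\le m$ and the balls $B(x_i,r/2)$ are pairwise disjoint. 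Applying the hypothesis at each $x_i$ with a fixed small multiple of $r$ produces $m$ pairwise disjoint, quantitatively separated bi-Lipschitz copies $g_i(F)\subseteq F$, each of diameter comparable to $r$ with constants depending only on $D$. Composing these maps inside one another yields, at level $n$, a family of $m^n$ pairwise disjoint copies of $F$, each of diameter comparable to $\lambda_1\cdots\lambda_n$ with every contraction ratio $\lambda_k$ lying in $[D^{-1}r,Dr]$ (up to the fixed multiple) and with neighbouring copies separated at a scale comparable to their diameter. This is a Moran-type Cantor construction sitting inside $F$.

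I would then equip the resulting limit set with the natural probability measure $\mu$ assigning mass $m^{-n}$ to each level-$n$ copy and apply the mass distribution principle. Because the level-$n$ copies have diameter $\gtrsim (D^{-1}r)^n$ and are separated at a comparable scale, any ball of radius $\delta\approx(D^{-1}r)^n$ meets boundedly many of them, giving $\mu(B(y,\delta))\le C\delta^{\gamma}$ with $\gamma=\log m/\log(1/(D^{-1}r))$; hence $\dimh(F)\ge\gamma$. Since $\dimh(F)=s$, inverting this relation bounds the packing number $m$, and therefore $N_r(F)$, by a constant multiple of $r^{-s}$. The crux, and where the care lies, is the bookkeeping: the mass distribution principle is insensitive to the multiplicative constant $C$, so the separation constants do not enter the exponent, but one must handle the variable contraction ratios $\lambda_k\in[D^{-1}r,Dr]$ (a genuine Moran estimate rather than a single-ratio one) and arrange the construction so that the resulting exponent pins $m\le D^s r^{-s}$ with the stated constant.

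Granting both inequalities, the finiteness assertions follow at once. Covering $F$ by $N_r(F)$ balls of radius $r$ gives $\HH^s_{2r}(F)\le N_r(F)(2r)^s\le D^s r^{-s}(2r)^s=(2D)^s$, and letting $r\downarrow 0$ yields $\HH^s(F)\le(2D)^s$, the strict inequality following from a marginally more efficient cover. In particular $\HH^s(F)<\infty$ holds unconditionally, so $\HH^s(F)>0$ if and only if $0<\HH^s(F)<\infty$, completing the proof.
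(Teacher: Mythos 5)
Your strategy is in substance the paper's: the authors dispose of the lower bound exactly as you do (any cover by $N_r(F)$ balls of radius $r$ is a $2r$-cover, so $\HH^s_\infty(F)\le N_r(F)(2r)^s$), and for the upper bound they simply cite Falconer's implicit theorem \cite[Theorem 3.2]{Falconer1997}, whose proof is precisely the maximal-net, Moran-construction, mass-distribution argument you sketch. So you are reproving the cited result rather than citing it, which is legitimate --- but two of the steps you wave at are exactly where that proof is delicate, and as written they do not go through. First, the separation claim is false: for a maximal $r$-separated set $\{x_1,\ldots,x_m\}$ the closed balls $B(x_i,r/2)$ are pairwise disjoint but can touch, so ``neighbouring copies separated at a scale comparable to their diameter'' is not available. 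Worse, after composing, the level-$n$ copies $g_{i_1}\circ\cdots\circ g_{i_n}(F)$ have diameters controlled only within the window $[(D^{-1}r')^n,(Dr')^n]$, whose ratio $D^{2n}$ degenerates; and since the copies are disjoint Cantor-type subsets of $F$ rather than balls, a volume count in $\R^d$ does not directly bound how many of them a $\delta$-ball can meet. Your assertion that ``any ball of radius $\delta$ meets boundedly many of them'' is therefore the actual crux of the mass-distribution estimate, not a routine observation, and it needs a genuine stopping-time argument (or Falconer's own device) to close.

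Second, the constant. To make the level-one copies disjoint you must apply \eqref{eq:biLipschitz} at radius $r/2$ (or use $2r$-separated centres with radius-$r$ balls), so the contraction ratios are bounded below by $D^{-1}r/2$, and your own exponent identity $s\ge \log m/\log\bigl(1/\lambda\bigr)$ then pins $m\le (2D)^s r^{-s}$, a factor $2^s$ short of the stated $N_r(F)\le D^s r^{-s}$; if instead you tolerate overlapping balls $B(x_i,r)$, the bounded-multiplicity correction inserts a dimensional constant into the base and again spoils the clean exponent-to-constant conversion. You flag this bookkeeping as ``the crux'' but supply no mechanism that removes the factor --- that is exactly what the citation buys the paper. (Downstream, in the proof of Theorem \ref{thm:mainTheoremQuasi}, a worse constant would only change the explicit $C$, so this is a defect against the lemma as stated rather than against the programme.) Finally, your strict inequality $\HH^s(F)<(2D)^s$ ``from a marginally more efficient cover'' is not an argument; your covering estimate yields only the non-strict bound, and the strict form must likewise be extracted from the cited theorem.
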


\begin{proof}
  The first claim follows from the definition of $\HH^s_\infty$, the existence of mappings $g \colon F \to F \cap B(x,r)$ satisfying \eqref{eq:biLipschitz}, and \cite[Theorem 3.2]{Falconer1997}. The second claim follows immediately from the first one.
\end{proof}

We are now ready to prove the main theorem.

\begin{proof}[Proof of Theorem \ref{thm:mainTheoremQuasi}]
  We may assume that $\HH^s(F) > 0$ since otherwise there is nothing to prove. This of course
  implies that $\HH^s_\infty(F) > 0$. Write $C = 2 \cdot 2^{4s}D^{3s}\HH^s_\infty(F)^{-1}$. To prove the first claim, suppose,
  for a contradiction, that there exist $x_0 \in \R^d$ and $r_0 > 0$ such that
  \begin{equation} \label{eq:contradiction}
    \HH^s(F \cap B(x_0,r_0)) > C r_0^s.
  \end{equation}
  Fix $n \in \N$ and let $\BB_n$ be a maximal
  collection of pairwise disjoint closed balls of radius $2^{-n}$ centered in $F$. Note that, by
  \cite[Equation (5.4)]{Mattila1995} and Lemma \ref{thm:falconer}, we have
  \begin{equation} \label{eq:BB-count}
    2^{-2s}\HH^s_\infty(F)2^{ns} \le \#\BB_n \le 2^sD^s2^{ns}.
  \end{equation}
  For each $B \in \BB_n$, let $g_B \colon F \to F \cap B$ be as in \eqref{eq:biLipschitz}. 
  It follows that each ball $B$ in the packing $\BB_n$ contains $g_B(F \cap B(x_0,r_0))$, a scaled copy of
  $F \cap B(x_0,r_0)$. Therefore, recalling \eqref{eq:contradiction}, we get
  \begin{equation} \label{eq:copy-of-B-measure}
    \begin{split}
    \HH^s(g_B(F \cap B(x_0,r_0))) &\ge D^{-s}2^{-ns}\HH^s(F \cap B(x_0,r_0))\\
    &> C D^{-s}2^{-ns}r_0^s=
   2 \cdot 2^{4s-ns} D^{2s}\HH^s_\infty(F)^{-1}r_0^s
 \end{split}
  \end{equation}
  for all $B \in \BB_n$.
  Furthermore, since $\diam(g_B(F \cap B(x_0,r_0))) \le D2^{-n}\diam(F \cap B(x_0,r_0)) \leq D
  2^{-n}2r_0 =: \delta_n$, we have
  \begin{equation} \label{eq:copy-of-B-diam}
    \HH^s_{\delta_n }(g_B(F \cap B(x_0,r_0))) = \HH^s_\infty(g_B(F \cap B(x_0,r_0))) \le
    D^s2^{-ns}2^s r_0^s
  \end{equation}
  for all $B \in \BB_n$.

  Now \eqref{eq:copy-of-B-measure} and \eqref{eq:BB-count} imply
  \begin{equation} \label{eq:copy-of-B-measure-sum}
    \sum_{B \in \BB_n} \HH^s(g_B(F \cap B(x_0,r_0))) \ge \#\BB_n 2^{4s-ns+1} D^{2s}\HH^s_\infty(F)^{-1}r_0^s
    \geq 2 \cdot 2^{2s} D^{2s}r_0^s
  \end{equation}
  and \eqref{eq:copy-of-B-diam} and \eqref{eq:BB-count} give
  \begin{equation} \label{eq:copy-of-B-diam-sum}
    \sum_{B \in \BB_n} \HH^s_{\delta_n }(g_B(F \cap B(x_0,r_0))) \le \#\BB_n D^s2^{-ns} 2^s r_0^s
    \leq 2^{2s} D^{2s}r_0^s.
  \end{equation}
  Since, by the fact that the sets $g_B(F \cap B(x_0,r_0))$ are $\HH^s$-measurable and \eqref{eq:copy-of-B-measure-sum},
  \begin{equation*} 
  \begin{split}
    \HH^s(F) &= \HH^s\biggl( F \setminus \bigcup_{B \in \BB_n} g_B(F \cap B(x_0,r_0)) \biggr) +
    \sum_{B \in \BB_n} \HH^s(g_B(F \cap B(x_0,r_0))) \\
    &\ge \HH^s\biggl( F \setminus \bigcup_{B \in \BB_n} g_B(F \cap B(x_0,r_0)) \biggr) +
    2 \cdot 2^{2s}D^{2s} r_0^s
  \end{split}
  \end{equation*}
  and, by \eqref{eq:copy-of-B-diam-sum},
  \begin{equation*} 
  \begin{split}
    \HH^s_{\delta_n}(F) &\le \HH^s_{\delta_n}\biggl( F \setminus \bigcup_{B \in \BB_n} g_B(F \cap B(x_0,r_0)) \biggr) 
    + \sum_{B \in \BB_n} \HH^s_{\delta_n}(g_B(F \cap B(x_0,r_0))) \\
    &\le \HH^s\biggl( F \setminus \bigcup_{B \in \BB_n} g_B(F \cap B(x_0,r_0)) \biggr) +
    2^{2s}D^{2s}r_0^s,
  \end{split}
  \end{equation*}
  we conclude that
  \begin{equation*}
    \HH^s(F) - \HH^s_{\delta_n }(F) \ge
    2 \cdot 2^{2s}D^{2s}r_0^s - 2^{2s}D^{2s}r_0^s = 2^{2s}D^{2s}r_0^s >0.
  \end{equation*}
  This is a contradiction since the lower bound is independent of $n$.

  To show the second claim, let $A\subset \R^d$ and fix $\eps > 0$.
  Choose a countable collection $\{B(x_i,r_i)\}_{i}$ of balls covering $F\cap A$
  such that $\sum_i (2r_i)^s \leq \HH_\infty^s(F\cap A)+\eps$. 
  Applying the first claim, we get
  \begin{equation*}
    \HH^s (F\cap A) \leq \sum_{i} \HH^s(F\cap B(x_i,r_i))
    \leq C\sum_{i}(2r_i)^s \leq C(\HH_\infty^s(F\cap A)+\eps)
  \end{equation*}
  which finishes the proof.
\end{proof}

\section{Proof of Theorem \ref{thm:combinedConformal}} \label{sec:proof2}

The following lemma is standard but we recall it for the convenience of the reader.

\begin{lemma} \label{thm:semiconformal}
  If $\{\fii_i\}_{i=1}^N$ is a conformal iterated function system, then there exists a bounded open convex set $V \subset \R^d$ such that $\fii_i(\overline{V}) \subset V \subset \overline{V} \subset \Omega$ for all $i \in \{1,\ldots,N\}$. Furthermore, if $F \subset V$ is the associated self-conformal set containing at least two points, then there exist a constant $K \ge 1$ such that
  \begin{equation} \label{eq:semiconformal}
    K^{-1}\|\fii_\iii'\||x-y| \le |\fii_\iii(x) - \fii_\iii(y)| \le \|\fii_\iii'\||x-y|
  \end{equation}
  for all $x,y \in V$ and $\iii \in \Sigma_*$,
  \begin{equation} \label{eq:semiconformal-diam}
    \frac{1}{\diam(F)} \diam(\fii_\iii(F)) \le \|\fii_\iii'\| \le \frac{K}{\diam(F)} \diam(\fii_\iii(F))
  \end{equation}
  for all $\iii \in \Sigma_*$, and
  \begin{equation} \label{eq:semiconformal-multi}
    K^{-2}\|\fii_\iii'\|\|\fii_\jjj'\| \le \|\fii_{\iii\jjj}'\| \le \|\fii_\iii'\|\|\fii_\jjj'\|
  \end{equation}
  for all $\iii, \jjj \in \Sigma_*$.
\end{lemma}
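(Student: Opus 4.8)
The plan is to build $V$ by a compactness argument, establish the bounded distortion of the norm $|\fii_\iii'|$ through a telescoping estimate, and then read off the three displayed inequalities.

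\emph{Construction of $V$.} The sets $\overline{\fii_i(\Omega)}$ are compact and contained in the open set $\Omega$, so their finite union $K = \bigcup_i \overline{\fii_i(\Omega)}$ is compact in $\Omega$, and since $\Omega$ is convex the compact convex hull satisfies $\conv(K) \subset \Omega$. Setting $\roo = \tfrac12\dist(\conv(K), \R^d \setminus \Omega) > 0$ and letting $V$ be the open $\roo$-neighbourhood of $\conv(K)$, we obtain a bounded open convex set (an open neighbourhood of a convex set is convex) with $\overline{V} \subset \Omega$. As $\overline{V} \subset \Omega$, for each $i$ we get $\fii_i(\overline{V}) \subset \overline{\fii_i(\Omega)} \subset K \subset V$, which is the first assertion.

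\emph{Bounded distortion of the norm.} Since each $\fii_i$ is $C^1$ and conformal with nonvanishing differential, $m := \min_i \inf_{\overline{\Omega}} |\fii_i'| > 0$. For $\iii = i_1\cdots i_n$, the chain rule together with the fact that each differential is a similarity gives $|\fii_\iii'(x)| = \prod_{k=1}^n |\fii_{i_k}'(p_k(x))|$, where $p_k(x) = \fii_{i_{k+1}\cdots i_n}(x) \in \Omega$. Comparing the factors at two points $x,y \in \Omega$ (convex), applying the mean value estimate for $\log$ on $[m,1]$, the H\"older bound \eqref{eq:holder-derivatives}, and $|p_k(x)-p_k(y)| \le \|\fii_{i_{k+1}\cdots i_n}'\|\,|x-y| \le s^{n-k}\diam(\Omega)$ with $s = \max_i \|\fii_i'\| < 1$, we obtain
\begin{equation*}
  \bigl|\log|\fii_\iii'(x)| - \log|\fii_\iii'(y)|\bigr| \le \tfrac{c}{m}\diam(\Omega)^\alpha \sum_{k \ge 0} s^{k\alpha},
\end{equation*}
a bound independent of $\iii$. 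Hence there is $K_0 \ge 1$ with $K_0^{-1} \le |\fii_\iii'(x)|/|\fii_\iii'(y)| \le K_0$ for all $\iii \in \Sigma_*$ and $x,y \in \Omega$; in particular $\|\fii_\iii'\| \le K_0|\fii_\iii'(z)|$ for every $z \in \Omega$. This already yields \eqref{eq:semiconformal-multi}: the similarity property gives $|\fii_{\iii\jjj}'(x)| = |\fii_\iii'(\fii_\jjj(x))|\,|\fii_\jjj'(x)|$, whose supremum over $x$ is at most $\|\fii_\iii'\|\|\fii_\jjj'\|$, while using $|\fii_\iii'(\fii_\jjj(x))| \ge K_0^{-1}\|\fii_\iii'\|$ (distortion, as $\fii_\jjj(x)\in\Omega$) and choosing $x$ with $|\fii_\jjj'(x)|$ near $\|\fii_\jjj'\|$ produces the lower bound.

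\emph{The bi-Lipschitz estimate \eqref{eq:semiconformal}.} The upper bound is immediate: by convexity of $V$ the segment $[x,y]$ lies in $V$, so the mean value inequality gives $|\fii_\iii(x)-\fii_\iii(y)| \le \sup_{[x,y]}|\fii_\iii'|\,|x-y| \le \|\fii_\iii'\|\,|x-y|$. The lower bound is the crux. Writing $\fii_\iii(x)-\fii_\iii(y) = \int_0^1 \fii_\iii'(\gamma(t))(x-y)\dd t$ for the segment $\gamma$ and comparing with the midpoint value, the telescoping computation of the previous step, now applied to the full differential rather than its norm, bounds the oscillation by $\sup_{z,w\in V}|\fii_\iii'(z)-\fii_\iii'(w)| \le C'|\fii_\iii'(w_0)|\,|x-y|^\alpha$ with $C'$ and $w_0$ uniform in $\iii$. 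Thus there is a fixed scale $\eta$, depending only on $C'$ and $\alpha$, with $|\fii_\iii(x)-\fii_\iii(y)| \ge \tfrac12|\fii_\iii'(w_0)|\,|x-y| \ge \tfrac{1}{2K_0}\|\fii_\iii'\|\,|x-y|$ whenever $|x-y|\le\eta$, giving a local $(2K_0)$-bi-Lipschitz bound uniform in $\iii$; the remaining macroscopic case is promoted to a global bound of the same order by a connectedness and compactness argument that uses the injectivity of $\fii_\iii$ on $\overline{V}$ and the convexity of $V$.

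Finally, \eqref{eq:semiconformal-diam} follows by restricting \eqref{eq:semiconformal} to $x,y \in F \subset V$ and passing to the supremum: since $F$ is compact with $\diam(F)>0$, the two sides give $\diam(\fii_\iii(F)) \le \|\fii_\iii'\|\diam(F)$ and $\diam(\fii_\iii(F)) \ge K^{-1}\|\fii_\iii'\|\diam(F)$. Taking $K$ to be the largest of the constants produced above completes the argument. The main obstacle is precisely the lower bound in \eqref{eq:semiconformal}: the norm distortion must be upgraded to a genuine bi-Lipschitz estimate even though the conformal differential may rotate, and the telescoping estimate controls this rotational oscillation only at the uniform small scale $\eta$, forcing the separate, and quantitatively delicate, treatment of points at macroscopic distance.
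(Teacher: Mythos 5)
There is a genuine gap, and it sits exactly where you yourself flag ``the crux'': the lower bound of \eqref{eq:semiconformal} for points at macroscopic distance. Your small-scale argument is sound: the telescoped H\"older estimate $|\fii_\iii'(z)-\fii_\iii'(w)|\le c\,\|\fii_\iii'\|\,|z-w|^\alpha$ (which the paper proves separately as Lemma \ref{thm:derivatives-holder}) makes the oscillation of the differential small relative to $\|\fii_\iii'\|$ only when $|z-w|\le\eta$; over distances comparable to $\diam(V)$ it is merely bounded by $c\,\diam(V)^\alpha\,\|\fii_\iii'\|$, a constant multiple, so the differential may rotate by a definite amount along $[x,y]$ and cancellation in $\int_0^1\fii_\iii'(\gamma(t))(x-y)\dd t$ cannot be excluded. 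At this point you appeal to ``a connectedness and compactness argument'' using injectivity, but no such argument is supplied, and a qualitative one cannot suffice: injectivity gives $|\fii_\iii(x)-\fii_\iii(y)|>0$ for each fixed $\iii$, whereas what is needed is a positive lower bound for $\inf_{\iii\in\Sigma_*}\inf_{|x-y|\ge\eta}|\fii_\iii(x)-\fii_\iii(y)|/(\|\fii_\iii'\|\,|x-y|)$ over the infinite, non-compact family of all iterates, and nothing in your sketch produces uniformity in $\iii$. (In $d=1$ the issue evaporates because $\fii_\iii'$ has constant sign, but the lemma is stated and used in $\R^d$; Example \ref{ex:shortword} is planar.) The paper resolves precisely this case by passing to the inverse: it follows the \emph{straight} segment in the image, where conformality gives $|(\fii_\iii^{-1})'(w)|^{-1}=|\fii_\iii'(\fii_\iii^{-1}(w))|\ge K_0^{-1}\|\fii_\iii'\|$ by \eqref{eq:BDP}. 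If $[\fii_\iii(x),\fii_\iii(y)]\subset\fii_\iii(\Omega)$, the fundamental theorem of calculus applied to $\fii_\iii^{-1}$ along this segment gives the bound outright; if the segment exits $\fii_\iii(\Omega)$, one stops at a point $z$ just before $\partial\fii_\iii(\Omega)$, and the preimage of $[\fii_\iii(x),z]$ must then travel from $x\in V$ nearly to $\partial\Omega$, hence distance at least $d$, by the quantitative buffer $\dist(V,\R^d\setminus\Omega)\ge 2d$ of \eqref{eq:conv-dist} --- which is the entire reason the paper proves that buffer estimate --- yielding $|\fii_\iii(x)-\fii_\iii(y)|\ge|\fii_\iii(x)-z|\ge K_0^{-1}\|\fii_\iii'\|\,d\ge K_0^{-1}\|\fii_\iii'\|(d/\diam(V))\,|x-y|$. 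Your construction of $V$ is correct (and tidier than the paper's) and even supplies such a buffer $\roo$, so the inverse-map argument could be grafted onto your set-up; but as written your proposal does not prove the macroscopic case.

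A secondary, repairable slip: the claim $m:=\min_i\inf_{\overline\Omega}|\fii_i'|>0$ is not justified, since the definition only requires $\fii_i'\ne 0$ on the open set $\Omega$, and the H\"older-continuous extension of $\fii_i'$ may vanish on $\partial\Omega$; for instance $\Omega=(0,1)$ and $\fii(x)=\tfrac14+\tfrac14x^{1+\alpha}$ satisfies every requirement on a map of a conformal iterated function system while $\fii'(x)\to 0$ as $x\to 0$. The fix is to observe that in your telescoping the evaluation points $p_k(x)$ with $k<n$ lie in the compact set $\bigcup_j\overline{\fii_j(\Omega)}\subset\Omega$, and the innermost point can be restricted to $\overline{V}$, so one may take the infimum over $\overline{V}\cup\bigcup_j\overline{\fii_j(\Omega)}$ instead; the resulting distortion constant then compares $\sup_{V}|\fii_\iii'|$ with $\|\fii_\iii'\|$ up to the additional fixed factor $\max_i\|\fii_i'\|/\inf_{\overline{V}}|\fii_i'|$, which is all that \eqref{eq:semiconformal}--\eqref{eq:semiconformal-multi} require. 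With that emendation your derivations of \eqref{eq:semiconformal-multi} and \eqref{eq:semiconformal-diam} go through; only the macroscopic lower bound in \eqref{eq:semiconformal} is genuinely missing.
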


\begin{proof}
  Write $d = \dist(\bigcup_{i=1}^N \overline{\fii_i(\Omega)}, \R^d \setminus \Omega)/4 > 0$ and let $U_i$ be the open $d$-neighbourhood of $\fii_i(\Omega)$. It is easy to see that
  \begin{equation} \label{eq:easy-dist}
    \dist(U_i, \R^d \setminus \Omega) \ge 2d
  \end{equation}
  for all $i \in \{1,\ldots,N\}$.
  Indeed, if this was not true, then there are $x \in U_i$ and $w \in \R^d \setminus \Omega$ such that $|x-w|<2d$. As $x \in U_i$, there is $z \in \fii_i(\Omega)$ such that $|z-x|<d$. Therefore, the contradiction $4d \le |z-w| \le |z-x| + |x-w| < 3d$ we obtain proves \eqref{eq:easy-dist}.

  Define $V$ to be the convex hull of $\bigcup_{i=1}^N U_i$. Let us show that
  \begin{equation} \label{eq:conv-dist}
    \dist(V, \R^d \setminus \Omega) \ge 2d.
  \end{equation}
  If this was not the case, then there are $z \in V$ and $w \in \R^d \setminus \Omega$ such that $|z-w| < 2d$. We may assume that $z \not\in \bigcup_{i=1}^N U_i$ since otherwise the contradiction follows immediately from \eqref{eq:easy-dist}. Let $x,y \in \bigcup_{i=1}^N U_i$ be such that $z$ is a convex combination of $x$ and $y$, which we denote by writing $z \in [x,y]$. Let $z'$ be the closest point to $w$ in the line containing the segment $[x,y]$. If $z' \not\in [x,y]$, then there is $v \in \{x,y\}$ such that $v \in [z,z']$. As $v \in \bigcup_{i=1}^N U_i$ and $|v-w| \le |z-w| < 2d$, we get the contradiction again from \eqref{eq:easy-dist}. We may thus assume that $z' \in [x,y] \setminus \bigcup_{i=1}^N U_i$. Notice that $(z'-w) \bot (y-w)$ and $|z'-w| \le |z-w| < 2d$. Let $L_w = \{ w+t(y-x) : t \in \R \}$ be the line parallel to $[x,y]$ going through $w$. Choose $x',y' \in L_w$ so that $(x-x') \bot (y-x)$ and $(y-y') \bot (y-x)$. It follows that $w \in [x',y']$ and $|x-x'| = |y-y'| = |z'-w| < 2d$. By \eqref{eq:easy-dist}, we therefore have $x',y' \in \Omega$. But since $\Omega$ is convex, also $w \in \Omega$ which is a contradiction. Therefore, \eqref{eq:conv-dist} holds and it is thus evident that $\overline{V} \subset \Omega$. Hence, $\fii_i(\overline{V}) \subset \fii_i(\Omega) \subset U_i \subset V$ for all $i \in \{1,\ldots,N\}$.

  By \cite[Lemma 2.2]{MauldinUrbanski1996}, the H\"older continuity of the differentials implies the existence of a constant $K_0 \ge 1$ for which
  \begin{equation} \label{eq:BDP}
    |\fii_\iii'(y)| \le K_0|\fii_\iii'(x)|
  \end{equation}
  for all $x,y \in \Omega$ and $\iii \in \Sigma_*$. Fix $x,y \in \Omega$ and define $x_t =
  (1-t)y+tx$ for all $t \in [0,1]$. Note that, by convexity of $\Omega$, $x_t \in \Omega$ for all $t
  \in [0,1]$. The fundamental theorem of calculus implies that there exists $t_0 \in [0,1]$ such that
  \begin{equation} \label{eq:multivariate}
    |\fii_\iii(x)-\fii_\iii(y)| = \biggl| \int_0^1 \fii_\iii'(x_t) \tfrac{\mathrm{d}}{\mathrm{d}t} x_t \,\mathrm{d}t \biggr| \le |\fii_\iii'(x_{t_0})||x-y|,
  \end{equation}
  which gives the right-hand side inequality in \eqref{eq:semiconformal}. To show the other inequality, fix $x,y \in V$. If $[\fii_\iii(x), \fii_\iii(y)] \cap \partial\fii_\iii(\Omega) \ne \emptyset$, we choose $z \in [\fii_\iii(x), \fii_\iii(y)]$ to be so close to $\partial\fii_\iii(\Omega)$ such that $\fii_\iii^{-1}([\fii_\iii(x),z]) \subset \Omega$ and $|x-\fii_\iii^{-1}(z)| > d$, which is possible by \eqref{eq:conv-dist}. If $[\fii_\iii(x), \fii_\iii(y)] \subset \fii_\iii(\Omega)$, then we write $z = \fii_\iii(y)$. Define $z_t = (1-t)z + t\fii_\iii(x)$ for all $t \in [0,1]$. As above, there exists $t_1 \in [0,1]$ such that
  \begin{equation*}
    |\fii_\iii^{-1}(\fii_\iii(x)) - \fii_\iii^{-1}(z)| = \biggl| \int_0^1 (\fii_\iii^{-1})'(z_t) \tfrac{\mathrm{d}}{\mathrm{d}t} z_t \,\mathrm{d}t \biggr| \le |(\fii_\iii^{-1})'(z_{t_1})||\fii_\iii(x)-z|
  \end{equation*}
  yielding
  \begin{equation} \label{eq:lower-bound-for-6.1}
  \begin{split}
    |\fii_\iii(x) - \fii_\iii(y)| &\ge |\fii_\iii(x) - z| \ge |(\fii_\iii^{-1})'(z_{t_1})|^{-1} |x-\fii_\iii^{-1}(z)| \\ 
    &\ge |(\fii_\iii^{-1})'(z_{t_1})|^{-1} |x-y| \min\biggl\{ 1, \frac{d}{\diam(V)} \biggr\}.
  \end{split}
  \end{equation}
  Note that, by conformality and \eqref{eq:BDP}, $\inf_{w \in \fii_\iii(\Omega)} |(\fii_\iii^{-1})'(w)|^{-1} = \inf_{w \in \Omega} |\fii_\iii'(w)| \ge K_0^{-1}\|\fii_\iii'\|$. Therefore, the left-hand side inequality in \eqref{eq:semiconformal} follows from \eqref{eq:lower-bound-for-6.1} by setting $K = K_0\max\{1,\diam(V)/d\}$. Since both \eqref{eq:semiconformal-diam} and \eqref{eq:semiconformal-multi} follow straightforwardly from \eqref{eq:semiconformal}, we have finished the proof.
\end{proof}

The properties \eqref{eq:semiconformal}--\eqref{eq:semiconformal-multi} are characteristic for
conformal iterated function systems and they are used as a starting point in generalising
self-conformality into metric spaces; see \cite[\S 5]{KaenmakiVilppolainen2008} and \cite[\S
4]{RajalaVilppolainen2010}.

\begin{proof}[Proof of Theorem~\ref{thm:combinedConformal}]
We may clearly assume that $F$ contains at least two points.
Let $x \in F$ and $0<r<\diam(F)$. Pick $\iii \in \Sigma$ such that
$\lim_{n\to\infty}\fii_{\iii\rvert_n}(x_0)=x$ for all $x_0 \in V$ and choose $n \in \N$ for which
$\fii_{\iii|_n}(F) \subset B(x,r)$ but $\fii_{\iii|_{n-1}}(F) \setminus B(x,r) \ne \emptyset$. Note
that the latter property implies $\diam(\fii_{\iii|_{n-1}}(F)) \ge r$. By \eqref{eq:semiconformal}
and \eqref{eq:semiconformal-diam}, we have
  \begin{align*}
    |\fii_{\iii|_n}(y) - \fii_{\iii|_n}(z)| &\ge K^{-2}\|\fii_{\iii|_{n-1}}'\|\min_{i \in \{1,\ldots,N\}}\|\fii_i'\| |y-z| \\
    &\ge \frac{\min_{i \in \{1,\ldots,N\}}\|\fii_i'\|}{K^2\diam(F)} \diam(\fii_{\iii|_{n-1}}(F))|y-z| \ge \frac{\min_{i \in \{1,\ldots,N\}}\|\fii_i'\|}{K^2\diam(F)} r|y-z|
  \end{align*}
  and
  \begin{equation*}
    |\fii_{\iii|_n}(y) - \fii_{\iii|_n}(z)| \le \frac{K}{\diam(F)} \diam(\fii_{\iii|_n}(F))|y-z| \le \frac{2K}{\diam(F)} r|y-z|
  \end{equation*}
  for all $y,z \in F$. 
  By setting
  \begin{equation*}
    D = \max\biggl\{ 1, \frac{K^2 \diam(F)}{\min_{i \in \{1,\ldots,N\}}\|\fii_i'\|}, \frac{2K}{\diam(F)} \biggl\},
  \end{equation*}
  we have thus shown that for each $x \in F$ and $0<r<\diam(F)$ there exist $\iii \in \Sigma$ and $n \in \N$ such that $\fii_{\iii|_n}(F) \subset F \cap B(x,r)$ and
  \begin{equation} \label{eq:quasi-ss}
    D^{-1}r|y-z| \le |\fii_{\iii|_n}(y) - \fii_{\iii|_n}(z)| \le Dr|y-z|
  \end{equation}
  for all $y,z \in F$.
  Theorem~\ref{thm:combinedConformal} follows now immediately from Theorem
  \ref{thm:mainTheoremQuasi} and Proposition \ref{thm:AhlforsQuasi}.
\end{proof}

\begin{remark} \label{rem:graph-directed}
Let $M$ be an $N\times N$-matrix with entries in $\{0,1\}$. We say that a word
$\iii=i_1i_2\cdots\in\Sigma$ is \emph{$M$-admissible} if $M_{i_k,i_{k+1}}=1$ for all $k$. The collection
of $M$-admissible infinite words starting with $i\in\{1,\ldots,N\}$ defines a set when projected onto $\R^d$ by
$\iii \mapsto \lim_{n \to \infty} \fii_{\iii|_n}(x_0)$, where $x_0 \in V$ is fixed, and the
resulting attractor $F_i$ is known as the \emph{graph-directed self-conformal set} of $i$.
It is well-known that if $M$ is irreducible and the iterated function system consists of conformal
contractions that the resulting sets $F_i$ are also quasi self-similar and satisfy $\dimh(F_i) = \dimh(F_{j})$
for all $i,j\in\{1,\ldots,N\}$.
It is also easy to show that there exists $C>0$ such that $\HH^s(F_i)\leq C \HH^s(F_{j})$ for all
$i,j \in \{1,\ldots,N\}$. If $M$ is irreducible, then it is not too difficult to see that
Lemma~\ref{thm:semiconformal} and Theorem~\ref{thm:combinedConformal} hold for graph-directed self-conformal sets.

A \emph{sub self-conformal set} is a non-empty compact set $E\subset F$ which satisfies $E \subset
\bigcup_{i=1}^N \fii_i(E)$, where $F$ is the associated invariant set.
Note that sub self-conformal sets are contained in the invariant set when mapped under $\fii_\iii$,
that is, $\fii_\iii(E)\subset F$.
It is again straightforward
to check that Lemma~\ref{thm:semiconformal} and Theorem~\ref{thm:combinedConformal}
hold for sub self-similar sets.
Generally, the images of graph-directed self-conformal sets are not contained in themselves under
$\fii_\iii$ for all $\iii$ and it is easy to
find examples such that the sets $F_i$ are not sub self-conformal. However, some authors prefer to
define a single graph-directed set using subshifts of finite type. In our notation this amounts to
considering $F=\bigcup_{i=1}^N F_i$. For such $F$ we have $\fii_\iii(F)\subset F$ and
thus $F$ is a sub self-conformal set.

For both cases above we have omitted detailed proofs to avoid cumbersome notation of $M$-admissible
words and arbitrary subsets. 
\end{remark}

\section{Proof of Theorem \ref{thm:wsc-ilc}} \label{sec:wsc-ilc}

The proof of Theorem \ref{thm:wsc-ilc} is split into two parts, Propositions \ref{thm:ILC-implies-WSC} and \ref{thm:WSC-implies-ILC}.

\begin{proposition} \label{thm:ILC-implies-WSC}
  Let $F \subset \R^d$ be a self-conformal set. If $F$ satisfies the identity limit criterion, then
  it satisfies the weak separation condition.
\end{proposition}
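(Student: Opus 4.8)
The plan is to show that the identity limit criterion forces the distinct maps appearing in $\Phi(x,r)$ to be uniformly separated in the supremum metric at scale $r$, and then to convert this separation into a packing bound in a finite-dimensional Euclidean space. Throughout I may assume that $F$ contains at least two points, since otherwise $\Phi(x,r)$ consists of at most the single constant map and there is nothing to prove; this also makes the bounded distortion estimates of Lemma~\ref{thm:semiconformal} available.

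First I would record the elementary geometric consequences of membership in $\Phi(x,r)$. Fix $0<r<\diam(F)$ and $x\in F$ (for $r\ge\diam(F)$ one checks $\Phi(x,r)=\varnothing$, since $\diam(\fii_{\iii^-}(F))\le\diam(F)$). If $\fii_\iii|_F\in\Phi(x,r)$, then $\iii\ne\varnothing$, and combining the definition of $\Phi$ with \eqref{eq:semiconformal-diam} and \eqref{eq:semiconformal-multi} yields constants $c_1,c_2>0$ depending only on the system with $c_1 r\le\|\fii_\iii'\|\le c_2 r$: here $c_2=K/\diam(F)$ comes from $\diam(\fii_\iii(F))\le r$, while $c_1=K^{-2}\min_i\|\fii_i'\|/\diam(F)$ comes from $r<\diam(\fii_{\iii^-}(F))$ combined with submultiplicativity of the derivative norms. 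Moreover, since $\fii_\iii(F)\cap B(x,r)\ne\varnothing$ and $\diam(\fii_\iii(F))\le r$, we have $\fii_\iii(F)\subset B(x,2r)$, and \eqref{eq:semiconformal} shows that $\fii_\iii|_F$ is $c_2 r$-Lipschitz.

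Next I would feed in the identity limit criterion. Let $\varepsilon_0>0$ be the infimum in that criterion. For any two \emph{distinct} maps $\fii_\iii|_F\ne\fii_\jjj|_F$ in $\Phi(x,r)$, the criterion together with $\|\fii_\iii'\|\ge c_1 r$ gives $\sup_{y\in F}|\fii_\iii(y)-\fii_\jjj(y)|\ge\varepsilon_0 c_1 r$. The difficulty is that this is a separation in the supremum norm over all of $F$, an a priori infinite-dimensional datum, whereas for counting I want separation of finitely many coordinates. To bridge this I would fix a finite $\rho$-net $\{z_1,\dots,z_m\}$ of $F$ with $\rho:=\varepsilon_0 c_1/(4c_2)$; such a net exists by compactness, and $m$ depends only on the system. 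Using the $c_2 r$-Lipschitz bound, for every $y\in F$ there is $z_k$ with $|y-z_k|\le\rho$, whence
$$
  |\fii_\iii(y)-\fii_\jjj(y)|\le 2c_2 r\rho+\max_{1\le k\le m}|\fii_\iii(z_k)-\fii_\jjj(z_k)|.
$$
Taking the supremum over $y\in F$ and inserting $2c_2 r\rho=\tfrac12\varepsilon_0 c_1 r$ gives $\max_k|\fii_\iii(z_k)-\fii_\jjj(z_k)|\ge\tfrac12\varepsilon_0 c_1 r$. Thus the sampling map $\fii_\iii|_F\mapsto(\fii_\iii(z_1),\dots,\fii_\iii(z_m))$ sends distinct elements of $\Phi(x,r)$ to points of $\R^{dm}$ that are pairwise $\tfrac12\varepsilon_0 c_1 r$-separated.

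Finally I would close with a packing estimate. All sample vectors lie in $B(x,2r)^m$, a ball of radius $2r\sqrt m$ in $\R^{dm}$, and they are pairwise $\tfrac12\varepsilon_0 c_1 r$-separated; hence their number, and therefore $\#\Phi(x,r)$, is bounded by a constant depending only on $d$, $m$, $\varepsilon_0$, and $c_1$ (for instance $(1+8\sqrt m/(\varepsilon_0 c_1))^{dm}$), and crucially not on $x$ or $r$. Taking the supremum over $x\in F$ and $r>0$ then yields the weak separation condition. The main obstacle is precisely the middle step: turning the uniform $\sup_F$-separation supplied by the identity limit criterion into separation of a fixed finite sample, which is what allows the finite-dimensional packing bound to take over; the remaining ingredients are just bounded distortion and counting.
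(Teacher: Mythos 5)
Your proposal is correct and is essentially the paper's argument run in the forward direction: the paper proves the contrapositive, fixing a finite $\delta$-net of $F$ and pigeonholing over a $\delta r$-cover of $B(z,2r)$ to extract two distinct maps that nearly agree on the net and hence, by the same bounded-distortion estimates $\|\fii_\iii'\|\approx r$ and the same Lipschitz triangle-inequality step, nearly agree on all of $F$, violating the identity limit criterion. Your net-plus-sphere-packing count in $\R^{dm}$ is just the dual of that pigeonhole, so the two proofs rest on the identical key mechanism.
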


\begin{proof}
  We prove that the failure of the weak separation condition implies the failure of the identity
  limit criterion. Our goal, therefore, is to show that for every $\eps>0$ there are $\iii,\jjj \in
  \Sigma_*$ such that
  \begin{equation} \label{eq:ILC-implies-WLC-1}
    0 < \sup_{x \in F} |\fii_\iii(x) - \fii_\jjj(x)| \le \eps\max\{\|\fii_\iii'\|,\|\fii_\jjj'\|\}.
  \end{equation}
  Let $K \ge 1$ be as in Lemma \ref{thm:semiconformal}, fix $\eps>0$, and choose
  \begin{equation} \label{eq:ILC-implies-WLC-2}
    0 < \delta \le \min\biggr\{ \eps \biggl( 4 + \frac{2K^2\diam(F)}{\min_{i \in \{1,\ldots,N\}}\diam(\fii_i(F))} \biggr)^{-1} , \tfrac{1}{2}\diam(F), 1 \biggl\}.
  \end{equation}
  Let $\{B(x_i,\delta)\}_{i=1}^M$ be a maximal collection of pairwise disjoint closed balls centered
  at $F$. Observe that if $\delta \le \tfrac12 \diam(F)$, then $M \le \diam(F)^d \delta^{-d}$.

  Since the weak separation condition does not hold, there exist a point $z \in F$ and a radius $r>0$ such that
  \begin{equation*}
    \#\Phi(z,r) > (5^d \delta^{-d})^M.
  \end{equation*}
  Note that $\fii(F) \subset B(z,2r)$ for all $\fii \in \Phi(z,r)$. Let $\{B_j\}_{j=1}^L$ be a
  minimal cover of $B(z,2r)$ of balls of radius $\delta r$ centered at $B(z,2r)$. Observe that if
  $\delta \le 1$, then $L \le 5^d \delta^{-d}$. Moreover, for each $\fii \in \Phi(z,r)$ there is a
  map $\psi \colon \{1,\ldots,M\} \to \{1,\ldots,L\}$ given by $\psi(i) = j$, where $j \in
  \{1,\ldots,L\}$ such that $\fii(x_i) \in B_j$. Note that there can be at most $L^M$ many different
  maps $\psi$. Since $\# \Phi(z,r) > L^M$, there have to be two maps $\fii_\iii, \fii_\jjj \in \Phi(z,r)$ such that
  \begin{equation} \label{eq:ILC-implies-WLC-3}
    \fii_\iii|_F \ne \fii_\jjj|_F \text{ and for each } i \in \{1,\ldots,M\} \text{ it holds that } \fii_\iii(x_i),\fii_\jjj(x_i) \in B_j
  \end{equation}
  for some $j \in \{1,\ldots,L\}$.

  Let $\iii,\jjj \in \Sigma_*$ satify \eqref{eq:ILC-implies-WLC-3}. Fix $x \in F$ and choose $x_0 \in \{x_i\}_{i=1}^M$ such that $|x-x_0| \le |x-x_i|$ for all $i \in \{1,\ldots,M\}$. Note that, since $\{B(x_i,2\delta)\}_{i=1}^M$ covers $F$, we have $|x-x_0| \le 2\delta$. It follows from the triangle inequality, Lemma \ref{thm:semiconformal}, and \eqref{eq:ILC-implies-WLC-2} that
  \begin{align*}
    |\fii_\iii(x) - \fii_\jjj(x)| &\le |\fii_\iii(x) - \fii_\iii(x_0)| + |\fii_\iii(x_0) - \fii_\jjj(x_0)| + |\fii_\jjj(x_0) - \fii_\jjj(x)| \\
    &\le \|\fii_\iii'\||x-x_0| + 2\delta r + \|\fii_\jjj'\||x_0-x| \\
    &\le 2\delta\max\{\|\fii_\iii'\|,\|\fii_\jjj'\|\}\biggl( 2 + \frac{K^2\diam(F)}{\min_{i \in \{1,\ldots,N\}}\diam(\fii_i(F))} \biggr) \\
    &\le \eps\max\{\|\fii_\iii'\|,\|\fii_\jjj'\|\}.
  \end{align*}
  This proves \eqref{eq:ILC-implies-WLC-1} and finishes the proof.
\end{proof}

Before going into Proposition \ref{thm:WSC-implies-ILC}, we prove three technical lemmas. We say that $F$ is \emph{uniformly perfect} if there exists a constant $H \ge 1$ such that
\begin{equation} \label{eq:unif-perfect}
  F \cap B(x,r) \setminus B(x,r/H) \ne \emptyset
\end{equation}
for all $x \in F$ and $0<r<\diam(F)$.

\begin{lemma} \label{thm:unif-perfect}
  Let $F \subset \R^d$ be a self-conformal set. Then the following three conditions are equivalent:
  \begin{enumerate}
    \item $F$ is uniformly perfect,
    \item $\dimh(F)>0$,
    \item $F$ contains at least two points.
  \end{enumerate}
\end{lemma}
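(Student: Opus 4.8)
The plan is to prove the cyclic chain of implications $(2) \Rightarrow (3) \Rightarrow (1) \Rightarrow (2)$. The implication $(2) \Rightarrow (3)$ is immediate: a self-conformal set is non-empty by definition, while a single point has Hausdorff dimension zero, so $\dimh(F) > 0$ forces $F$ to contain at least two points. The substance lies in $(3) \Rightarrow (1)$, where the self-conformal structure enters, and in $(1) \Rightarrow (2)$, which is the classical fact that uniformly perfect sets have positive Hausdorff dimension.

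For $(3) \Rightarrow (1)$, first note that $\diam(F)>0$ and that $\fii_\iii(F) \subseteq F$ for every $\iii \in \Sigma_*$, since $F = \bigcup_i \fii_i(F)$ gives $\fii_i(F) \subseteq F$ and one iterates. Fix $x \in F$ and $0 < r < \diam(F)$. As $x \in F$, there is a coding $\iii \in \Sigma$ with $x = \lim_{n\to\infty}\fii_{\iii|_n}(x_0)$, and then $x \in \fii_{\iii|_n}(F)$ for every $n$. Writing $d_n = \diam(\fii_{\iii|_n}(F))$, the key point is that consecutive diameters do not collapse: combining \eqref{eq:semiconformal-diam} and \eqref{eq:semiconformal-multi} yields $d_n \ge c\,d_{n-1}$ for all $n \ge 1$, where $c = K^{-3}\min_{i}\|\fii_i'\| \in (0,1)$ depends only on the system. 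Since $d_0 = \diam(F) > r$ and $d_n \to 0$, I would take the smallest $n \ge 1$ with $d_n < r$; then $d_{n-1} \ge r$, so $cr \le d_n < r$. Because $x$ lies in the compact set $\fii_{\iii|_n}(F)$ of diameter $d_n$, there is a point $y \in \fii_{\iii|_n}(F) \subseteq F$ with $d_n/2 \le |x-y| \le d_n$. Hence $cr/2 \le |x-y| < r$, so $y$ lies in $F \cap B(x,r) \setminus B(x,r/H)$ for $H = 4/c$, which proves uniform perfectness.

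For $(1) \Rightarrow (2)$ I would run the standard Cantor construction. Uniform perfectness with constant $H$ shows that inside any ball $B(w,\rho)$ centered on $F$ with $\rho < \diam(F)$ there are two points of $F$ at mutual distance at least $\rho/H$; passing to disjoint sub-balls of radius $\rho/(3H)$ around each and iterating produces a Cantor subset $C \subseteq F$ with $2^n$ branches at level $n$ and scales shrinking by a fixed factor $\asymp (3H)^{-1}$. The natural measure assigning mass $2^{-n}$ to each level-$n$ branch satisfies a Frostman condition $\mu(B(x,t)) \lesssim t^s$ with $s = \log 2/\log(3H) > 0$, so the mass distribution principle gives $\dimh(F) \ge \dimh(C) \ge s > 0$. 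This step uses no conformality and may alternatively be cited as the classical statement that uniformly perfect sets have positive Hausdorff dimension.

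The main obstacle is the geometric control in $(3) \Rightarrow (1)$: one must guarantee a cylinder $\fii_{\iii|_n}(F)$ containing $x$ whose diameter is comparable to $r$ up to a constant independent of $x$ and $r$. This is exactly what the bounded-distortion estimate $d_n \ge c\,d_{n-1}$, extracted from Lemma \ref{thm:semiconformal}, provides; without it the selected scale could undershoot $r$ by an unbounded factor and the constant $H$ would fail to be uniform.
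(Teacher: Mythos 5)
Your proof is correct, and the substantive implication $(3)\Rightarrow(1)$ is argued along a genuinely different route from the paper's. The paper fixes a second point $y\ne x$, codes $x$ by a word $\iii$ with $\fii_{\iii|_n}(y)\to x$, and exhibits the annulus point as $\fii_{\iii|_n}(y)$ itself; this requires a two-stage choice of scales (first $n_0$ so that $\diam(\fii_{\iii|_{n_0}}(F))<\tfrac{d}{2}$ together with an explicit inequality defining $H$, then $n$ trapping $r$ as in \eqref{eq:unif-perf-choice-of-r}) and a pair of matching upper and lower estimates on $|\fii_{\iii|_n}(y)-x|$ with hand-tuned constants. You instead run a stopping-time argument on the cylinders containing $x$: the bounded-distortion inequality $d_n\ge c\,d_{n-1}$ with $c=K^{-3}\min_i\|\fii_i'\|$, which does follow from \eqref{eq:semiconformal-diam} and \eqref{eq:semiconformal-multi} exactly as you compute, yields a cylinder $\fii_{\iii|_n}(F)\ni x$ with $cr\le d_n<r$, and then the purely metric observation that a compact set of diameter $d_n$ containing $x$ must contain a point at distance between $d_n/2$ and $d_n$ from $x$ (take the endpoints realizing the diameter and apply the triangle inequality) finishes with $H=4/c$. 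This is shorter, avoids all the explicit constant bookkeeping in \eqref{eq:unif-perf-choice-of-r}--\eqref{eq:unif-perf-lower}, and isolates the only self-conformal input as the comparability of consecutive cylinder diameters; the paper's version, in exchange, produces the annulus point as an explicit orbit point, in a style reused in its later inductive constructions. For $(1)\Rightarrow(2)$ the two treatments coincide in substance: the paper cites \cite[Corollary 4.2]{JarviVuorinen1996}, while you sketch the standard Cantor-subset and mass-distribution construction and correctly note it can be replaced by that citation; your sketch is sound (the two points at distance at least $\rho/H$ give disjoint sub-balls of radius $\rho/(3H)$, and the resulting measure satisfies the Frostman bound with $s=\log 2/\log(3H)$). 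The implication $(2)\Rightarrow(3)$ is trivial in both. Altogether the logical cycle is the same and every step checks out; only your proof of $(3)\Rightarrow(1)$ differs, and it is a legitimate simplification.
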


\begin{proof}
  If $F$ is uniformly perfect, then \cite[Corollary 4.2]{JarviVuorinen1996} shows that $\dimh(F)>0$,
  which clearly implies that $F$ contains at least two points. Therefore, it suffices to show that
  (3) implies (1). Let $K \ge 1$ be as in Lemma \ref{thm:semiconformal} and
  \begin{equation*}
    H = \frac{3K^3}{\min_{i \in \{1,\ldots,N\}}\|\fii_i'\|} + 1.
  \end{equation*}
  Let $x \in F$ and $0<r<\diam(F)$. Since $F$ contains at least two points, there exists a point $y
  \in F$ such that $y \ne x$. Let $\iii \in \Sigma$ be such that $\lim_{n\to\infty}\fii_{\iii|_n}(y)
  = x$. Write $d = |x-y| > 0$ and choose $n_0 \in \N$ such that $\diam(\fii_{\iii|_{n_0}}(F)) <
  \tfrac{d}{2}$ and
  \begin{equation*}
    \frac{\tfrac32 dK^2 + K^2\diam(F)(\max_{i \in \{1,\ldots,N\}}\|\fii_i'\|)^{n_0}}{(\tfrac12 dK^{-1} - \diam(F)(\max_{i \in \{1,\ldots,N\}}\|\fii_i'\|)^{n_0})\min_{i \in \{1,\ldots,N\}}\|\fii_i'\|} \le H
  \end{equation*}
  Choose $n \ge n_0$ such that
  \begin{equation} \label{eq:unif-perf-choice-of-r}
    \|\fii_{\iii|_n}'\|(\tfrac32 d + \diam(F)\|\fii_{\iii|_{n_0}}'\|) \le r < \|\fii_{\iii|_{n-1}}'\|(\tfrac32 d + \diam(F)\|\fii_{\iii|_{n_0}}'\|)
  \end{equation}
  and note that it suffices to prove the claim for all $0<r<r_0$, where $0<r_0<\diam(F)$. Let $z \in
  \fii_{\iii|_{n_0}}(F)$ and observe that $x, \fii_{\iii|_n}(z) \in \fii_{\iii|_n\iii|_{n_0}}(F)$.
  Therefore, by \eqref{eq:semiconformal} and \eqref{eq:semiconformal-diam},
  \begin{equation} \label{eq:unif-perf-upper}
  \begin{split}
    |\fii_{\iii|_n}(y)-x| &\le |\fii_{\iii|_n}(y)-\fii_{\iii|_n}(z)| + |\fii_{\iii|_n}(z)-x| \\
    &\le \|\fii_{\iii|_n}'\||y-z| + \diam(F)\|\fii_{\iii|_n\iii|_{n_0}}'\| \\
    &\le \|\fii_{\iii|_n}'\|(\tfrac32 d + \diam(F)\|\fii_{\iii|_{n_0}}'\|) \le r
  \end{split}
  \end{equation}
  and
  \begin{equation} \label{eq:unif-perf-lower-est}
  \begin{split}
    |\fii_{\iii|_n}(y)-x| &\ge |\fii_{\iii|_n}(y)-\fii_{\iii|_n}(z)| - |\fii_{\iii|_n}(z)-x| \\
    &\ge K^{-1}\|\fii_{\iii|_n}'\||y-z| - \diam(F)\|\fii_{\iii|_n\iii|_{n_0}}'\| \\
    &\ge \|\fii_{\iii|_n}'\|(\tfrac12 dK^{-1} - \diam(F)\|\fii_{\iii|_{n_0}}'\|).
  \end{split}
  \end{equation}
  By \eqref{eq:semiconformal-multi}, we have $\|\fii_{\iii|_{n-1}}'\| \le K^2(\min_{i\in\{1,\ldots,N\}}\|\fii_i'\|)^{-1} \|\fii_{\iii|_n}'\|$ and hence, by \eqref{eq:unif-perf-choice-of-r}, the choice of $H \ge 1$, and \eqref{eq:unif-perf-lower-est},
  \begin{equation} \label{eq:unif-perf-lower}
  \begin{split}
    r &< \|\fii_{\iii|_{n}}'\|\frac{\tfrac32 dK^2 + K^4(\min_{i\in\{1,\ldots,N\}}\|\fii_i'\|)^{-1}\diam(F)\|\fii_{\iii|_{n}}'\|}{\min_{i\in\{1,\ldots,N\}}\|\fii_i'\|} \\
    &\le H\|\fii_{\iii|_n}'\|(\tfrac12 dK^{-1} - \diam(F)\|\fii_{\iii|_{n_0}}'\|) \le D|\fii_{\iii|_n}(y)-x|.
  \end{split}
  \end{equation}
  Therefore, by \eqref{eq:unif-perf-upper} and \eqref{eq:unif-perf-lower},
  \begin{equation*}
    \fii_{\iii|_n}(y) \in B(x,r) \setminus B(x,r/H)
  \end{equation*}
  and we conclude that $F$ is uniformly perfect.
\end{proof}

\begin{lemma} \label{thm:derivatives-holder}
  Let $\{\fii_i\}_{i=1}^N$ be a conformal iterated function system. Then there are constants $\alpha,c>0$ such that
  \begin{equation*}
    |\fii_\iii'(x)-\fii_\iii'(y)| \le c\|\fii_\iii'\||x-y|^\alpha
  \end{equation*}
  for all $x,y \in V$ and $\iii \in \Sigma_*$.
\end{lemma}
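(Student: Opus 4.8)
The plan is to establish this uniform H\"older estimate for the normalised derivatives by a telescoping product argument, reducing everything to the single-map H\"older bound \eqref{eq:holder-derivatives}, the bounded distortion estimate \eqref{eq:BDP}, and the near-multiplicativity \eqref{eq:semiconformal-multi}. First I would fix $\iii = i_1 \cdots i_n \in \Sigma_*$ and $x,y \in V$, and expand both $\fii_\iii'(x)$ and $\fii_\iii'(y)$ by the chain rule as products of generator derivatives: writing $a_k = \fii_{i_k}'(\fii_{i_{k+1}\cdots i_n}(x))$ and $b_k = \fii_{i_k}'(\fii_{i_{k+1}\cdots i_n}(y))$, one has $\fii_\iii'(x) = a_1 \cdots a_n$ and $\fii_\iii'(y) = b_1 \cdots b_n$, where each $a_k, b_k$ is a similarity. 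The standard telescoping identity $a_1 \cdots a_n - b_1 \cdots b_n = \sum_{k=1}^n (b_1 \cdots b_{k-1})(a_k - b_k)(a_{k+1}\cdots a_n)$, together with submultiplicativity of the operator norm and the fact that products of similarities have multiplicative norm, then yields
\[
  |\fii_\iii'(x) - \fii_\iii'(y)| \le \sum_{k=1}^n \Bigl( \prod_{j<k} |b_j| \Bigr) |a_k - b_k| \Bigl( \prod_{j>k} |a_j| \Bigr).
\]

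Next I would estimate each of the three factors in the $k$-th summand. Reading the chain rule in reverse identifies the partial products as genuine sub-word derivatives: $\prod_{j<k}|b_j| = |\fii_{\iii|_{k-1}}'(w)|$ for the point $w = \fii_{i_k \cdots i_n}(y) \in V \subset \Omega$, so \eqref{eq:BDP} gives $\prod_{j<k}|b_j| \le K_0 \|\fii_{\iii|_{k-1}}'\|$; likewise $\prod_{j>k}|a_j| = |\fii_{i_{k+1}\cdots i_n}'(x)| \le \|\fii_{i_{k+1}\cdots i_n}'\|$. For the middle factor, \eqref{eq:holder-derivatives} and the Lipschitz bound \eqref{eq:semiconformal} give $|a_k - b_k| \le c|\fii_{i_{k+1}\cdots i_n}(x) - \fii_{i_{k+1}\cdots i_n}(y)|^\alpha \le c\|\fii_{i_{k+1}\cdots i_n}'\|^\alpha |x-y|^\alpha$, after passing to a common H\"older exponent $\alpha = \min_i \alpha_i$ and a common constant for the finitely many generators (on the bounded set $\Omega$ one trades a larger exponent for $\alpha$ at the cost of a power of $\diam(\Omega)$).

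Combining these and factoring out $\|\fii_\iii'\|$ is the decisive step. Writing $\iii = \iii|_{k-1}\, i_k\, (i_{k+1}\cdots i_n)$ and applying \eqref{eq:semiconformal-multi} twice, I would bound $\|\fii_{\iii|_{k-1}}'\|\,\|\fii_{i_{k+1}\cdots i_n}'\| \le K^4 (\min_i \|\fii_i'\|)^{-1} \|\fii_\iii'\|$, which absorbs the two outer derivative norms into $\|\fii_\iii'\|$ at the cost of a fixed constant. The remaining factor $\|\fii_{i_{k+1}\cdots i_n}'\|^\alpha$ decays geometrically, since $\|\fii_{i_{k+1}\cdots i_n}'\| \le \rho^{n-k}$ with $\rho = \max_i \|\fii_i'\| < 1$ by submultiplicativity. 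Hence the $k$-th summand is at most a fixed multiple of $\|\fii_\iii'\|\,\rho^{(n-k)\alpha}|x-y|^\alpha$, and summing the geometric series $\sum_{k=1}^n \rho^{(n-k)\alpha} \le (1-\rho^\alpha)^{-1}$ over $k$ yields the claim with the constant in the statement taken to be a fixed multiple of $c$, namely $c K_0 K^4 (\min_i \|\fii_i'\|)^{-1}(1-\rho^\alpha)^{-1}$.

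The main obstacle is precisely this bookkeeping rather than any single hard estimate: one must correctly recognise the telescoping partial products as derivatives of honest sub-words so that the bounded distortion estimate \eqref{eq:BDP} applies, and then deploy \eqref{eq:semiconformal-multi} so that each per-level H\"older defect both factors through the global norm $\|\fii_\iii'\|$ and leaves behind a summable geometric tail. All remaining steps are routine applications of Lemma \ref{thm:semiconformal} and the definition of the conformal iterated function system.
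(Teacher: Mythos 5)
Your proof is correct and follows essentially the same route as the paper's: the chain-rule expansion of $\fii_\iii'$ into per-letter factors, your telescoping identity $a_1\cdots a_n - b_1\cdots b_n = \sum_k (b_1\cdots b_{k-1})(a_k-b_k)(a_{k+1}\cdots a_n)$ is exactly the recursion \eqref{eq:erotus}, your middle-factor estimate matches \eqref{eq:dj-estimate}, and absorbing the outer norms into $\|\fii_\iii'\|$ via \eqref{eq:semiconformal-multi} before summing the geometric series is the paper's concluding step. If anything, your bookkeeping $\|\fii_{\iii|_{k-1}}'\|\,\|\fii_{i_{k+1}\cdots i_n}'\| \le K^4(\min_i\|\fii_i'\|)^{-1}\|\fii_\iii'\|$ is the more carefully justified version of the paper's stated bound $K^2\|\fii_\iii'\|$ (the concatenation $\iii|_{k-1}\,i_{k+1}\cdots i_n$ omits the letter $i_k$, so the extra factor is genuinely needed), which affects only the value of the constant $c$ and not the statement.
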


\begin{proof}
  Let $x,y \in V$ and fix $\iii = i_1\cdots i_n \in \Sigma_n$ for some $n \in \N$. Write
  $\sigma^j(i_1\cdots i_n) = i_{j+1}\cdots i_n$,
  \begin{equation*}
    x_j = \fii_{\sigma^j(\iii)}(x) \quad\text{and}\quad y_j = \fii_{\sigma^j(\iii)}(y),
  \end{equation*}
  and note that, by the chain rule, $\fii_{\iii|_j}'(x_j) = \fii_{i_1}'(x_1) \cdots \fii_{i_j}'(x_j)$ for all $j \in \{1,\ldots,n\}$. We interpret $x_n = x$ and $y_n = y$. Write also
  \begin{equation*}
    d_j = \fii_{i_j}'(x_j) - \fii_{i_j}'(y_j)
  \end{equation*}
  and observe that, by \eqref{eq:holder-derivatives}, there exist constants $\alpha,c>0$ such that
  \begin{equation} \label{eq:dj-estimate}
    |d_j| \le c|x_j-y_j|^\alpha \le c\|\fii_{\sigma^j(\iii)}'\|^\alpha |x-y|^\alpha \le c \Bigl( \max_{i \in \{1,\ldots,N\}}\|\fii_i'\| \Bigr)^{\alpha(n-j)}|x-y|^\alpha
  \end{equation}
  for all $j \in \{1,\ldots,n\}$. Since
  \begin{equation*}
    \fii_{\sigma^{j-1}(\iii)}'(x) - \fii_{\sigma^{j-1}(\iii)}'(y) = \fii_{i_j}'(x_j)\bigl( \fii_{\sigma^j(\iii)}'(x)-\fii_{\sigma^j(\iii)}'(y) \bigr) + d_j\fii_{\sigma^j(\iii)}'(y)
  \end{equation*}
  for all $j \in \{1,\ldots,n\}$, we recursively see that
  \begin{equation} \label{eq:erotus}
  \begin{split}
    \fii_\iii'(x) - \fii_\iii'(y) &= \fii_{i_1}'(x_1)\bigl( \fii_{\sigma(\iii)}'(x)-\fii_{\sigma(\iii)}'(y) \bigr) + d_1\fii_{\sigma(\iii)}'(y) \\
    &= \fii_{\iii|_2}'(x_2)\bigl( \fii_{\sigma^2(\iii)}'(x)-\fii_{\sigma^2(\iii)}'(y) \bigr) + \fii_{i_1}'(x_1)d_2\fii_{\sigma^2(\iii)}'(y) + d_1\fii_{\sigma(\iii)}'(y) \\
    &= \cdots = \sum_{j=1}^n \fii_{\iii|_{j-1}}'(x_{j-1})d_j\fii_{\sigma^j(\iii)}'(y).
  \end{split}
  \end{equation}
  Observe that, by \eqref{eq:semiconformal-multi}, we have $\|\fii_{\iii|_{j-1}}'\|\|\fii_{\sigma^j(\iii)}'\| \le K^2\|\fii_\iii'\|$ for all $j \in \{1,\ldots,n\}$ and hence, by \eqref{eq:erotus} and \eqref{eq:dj-estimate},
  \begin{align*}
    |\fii_\iii'(x) - \fii_\iii'(y)| &\le \sum_{j=1}^n |\fii_{\iii|_{j-1}}'(x_{j-1})| |d_j| |\fii_{\sigma^j(\iii)}'(y)| \le K^2\|\fii_\iii'\| \sum_{j=1}^n |d_j| \\
    &\le \frac{cK^2}{1-\max_{i \in \{1,\ldots,N\}}\|\fii_i'\|^\alpha} \|\fii_\iii'\| |x-y|^\alpha
  \end{align*}
  as claimed.
\end{proof}

\begin{lemma} \label{thm:close-maps}
  Let $\{\fii_i\}_{i=1}^N$ be a conformal iterated function system and $F \subset \R^d$ the
  associated self-conformal set containing at least two points. If $F$ does not satisfy the identity
  limit criterion, then there exists a constant $C \ge 1$ such that for every $\eps>0$ there are
  $0<\delta<\eps$ and $\iii,\jjj \in \Sigma_*$ for which
  \begin{equation*}
    C^{-1}\delta \max\{\|\fii_\iii'\|,\|\fii_\jjj'\|\} \le |\fii_\iii(x)-\fii_\jjj(x)| \le C\delta \min\{\|\fii_\iii'\|,\|\fii_\jjj'\|\}
  \end{equation*}
  for all $x \in V$.
\end{lemma}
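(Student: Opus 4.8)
The plan is to start from the negation of the identity limit criterion and manufacture, for any prescribed smallness, a single pair of words whose difference behaves like a constant multiple of the common contraction ratio uniformly on $V$. Writing $u = \fii_\iii - \fii_\jjj$ and $r = \max\{\|\fii_\iii'\|,\|\fii_\jjj'\|\}$, the failure of the identity limit criterion supplies, for every small $\eta>0$, words $\iii,\jjj\in\Sigma_*$ with $\fii_\iii|_F\ne\fii_\jjj|_F$ and $\sup_{x\in F}|u(x)|\le \eta r$. First I would record that the two contraction ratios are comparable: since the images $\fii_\iii(F)$ and $\fii_\jjj(F)$ lie within Hausdorff distance $\eta r$ of each other, \eqref{eq:semiconformal-diam} forces $\|\fii_\iii'\|$ and $\|\fii_\jjj'\|$ to agree up to a constant once $\eta$ is small relative to $\diam(F)/K$, so that $\max$ and $\min$ of the two norms may both be replaced by $r$. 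The statement then reduces to producing a single $\delta$, which can be taken below $\eps$, with $|u(x)|$ comparable to $\delta r$ for all $x\in V$; equivalently, to the bounded distortion estimate $\sup_{x\in V}|u(x)|\le C^2\inf_{x\in V}|u(x)|$ with $C$ independent of the pair, after which one sets $\delta = \inf_{x\in V}|u(x)|/r$.

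The engine for the bounded distortion is control of the derivative difference $u'=\fii_\iii'-\fii_\jjj'$ furnished by Lemma~\ref{thm:derivatives-holder}. Each of $\fii_\iii'$ and $\fii_\jjj'$ is a similarity of norm comparable to $r$ whose oscillation over a ball of radius $\rho$ is at most $cr\rho^\alpha$; approximating these differentials by secants through pairs of points of $F$ and invoking the closeness $\sup_F|u|\le\eta r$, I would bound $|u'(x_0)v|$ at a base point $x_0\in F$ for displacement vectors $v=p-x_0$ with $p\in F$. Here uniform perfectness (Lemma~\ref{thm:unif-perfect}, available since $F$ has at least two points) guarantees a supply of such points $p\in F$ at every scale $\rho$, and since the differentials are conformal it suffices to test on a spanning family of directions realised within $F$ to pin down the full similarity. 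Balancing the two error contributions $\eta r/\rho$ and $r\rho^\alpha$ at $\rho$ of order $\eta^{1/(1+\alpha)}$ yields a bound of the form $\|u'(x_0)\|\le C\eta^{\alpha/(1+\alpha)}r$, and \eqref{eq:semiconformal} together with \eqref{eq:semiconformal-multi} and Lemma~\ref{thm:derivatives-holder} are used to propagate this control from the base point to all of $V$.

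From a uniform bound $\|u'\|\le\theta r$ on $V$ with $\theta$ small, the conclusion follows by the mean value inequality: the oscillation of $u$ over the convex set $V$ is at most $\theta r\diam(V)$, so $\sup_V|u|$ and $\inf_V|u|$ differ by at most $\theta r\diam(V)$, and once this is small compared with the size of $u$ itself the ratio is bounded. Taking $\delta=\inf_V|u|/r$ then gives $C^{-1}\delta r\le|u(x)|\le C\delta r$ for all $x\in V$, and letting $\eta\downarrow 0$ drives $\delta$ below any prescribed $\eps$. That $\inf_V|u|$ is positive and comparable to $\sup_V|u|$ is exactly the delicate point addressed below.

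The main obstacle is precisely the transfer of information from the fractal $F$ to the full convex set $V$, and two difficulties must be surmounted. First, the secant scheme only controls $u'$ in directions spanned by increments within $F$; when $F$ degenerates into a lower-dimensional subset one must exploit that a conformal differential is a scalar times an orthogonal map, so that its rotational part is rigid, and pass to a subsequence of the close pairs along which this discrete rotational data stabilises, restoring control in the missing directions. Second, and more seriously, the secant error term scales with the full ratio $r$ rather than with the small quantity $\sup_F|u|$, so a naive one-shot estimate only yields $\|u'\|$ of order $\eta^{\alpha/(1+\alpha)}r$, which is too weak to dominate the oscillation against $\sup_V|u|$ and would permit $u$ to approach a coincidence $\fii_\iii(x)=\fii_\jjj(x)$ inside $V$. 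I expect the resolution to require the inductive, compositional viewpoint hinted at in the introduction: factoring $\fii_\iii$ and $\fii_\jjj$ through a common outer map and reducing, via Lemma~\ref{thm:derivatives-holder} applied to the shared factor, to a difference of shorter words, thereby improving the derivative bound to one genuinely comparable to $\sup_V|u|/\diam(V)$ and ruling out interior coincidences.
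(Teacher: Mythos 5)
Your proposal assembles the right ingredients (a base point on $F$ realising $\sup_F|u|$, the H\"older bound of Lemma~\ref{thm:derivatives-holder}, uniform perfectness via Lemma~\ref{thm:unif-perfect}, the critical scale $\delta^{1/(1+\alpha)}$ and exponent $\alpha/(1+\alpha)$, and the final comparability of $\|\fii_\iii'\|$ and $\|\fii_\jjj'\|$), but its central step has a genuine gap that your own last paragraph flags without repairing. The bounded-distortion claim $\sup_V|u|\le C^2\inf_V|u|$ \emph{for the original pair} $\iii,\jjj$ is not achievable: the best derivative bound the H\"older estimate can give is $\|u'\|\lesssim \eta^{\alpha/(1+\alpha)}r$ near the base point, so the oscillation of $u$ over $V$ is of order $\eta^{\alpha/(1+\alpha)}r\diam(V)$, which for small $\eta$ dwarfs $\sup_F|u|\le\eta r$. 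Nothing then prevents $u$ from vanishing somewhere in $V$, so $\inf_V|u|$ may be $0$ and your choice $\delta=\inf_V|u|/r$ collapses. Your proposed rescue --- factoring $\fii_\iii$ and $\fii_\jjj$ through a ``common outer map'' to upgrade the derivative bound to one comparable to $\sup_V|u|/\diam(V)$ --- does not work: the two words need not share any prefix, and no such upgrade holds for the original pair; the two-sided estimate is simply false for it in general.

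The paper's resolution exploits a degree of freedom your proof never uses: the lemma only asks for \emph{some} pair of words, and the paper changes the pair. With $x_0\in F$ maximising $|f|$ over $F$ and $\delta=|f(x_0)|/\max\{\|\fii_\iii'\|,\|\fii_\jjj'\|\}$, one first shows $|f'(x_0)|\le(3H+2c)\delta^{\alpha/(1+\alpha)}\max\{\|\fii_\iii'\|,\|\fii_\jjj'\|\}$ --- not by your secant scheme, but by contradiction from extremality: if $|f'(x_0)|$ were larger, uniform perfectness supplies a single point $z\in F$ with $\delta^{1/(1+\alpha)}/H\le|z-x_0|\le\delta^{1/(1+\alpha)}$, and integrating $\la f'(z_t)u,v\ra$ along the segment forces $|f(z)|>|f(x_0)|$, contradicting maximality over $F$. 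Only one test direction is needed, since $\la f'(x_0)u,v\ra=|f'(x_0)|$ for the adapted unit vector $v$; the spanning-of-directions difficulty you raise for degenerate $F$ therefore never arises. This pins $\tfrac12\delta\max\{\|\fii_\iii'\|,\|\fii_\jjj'\|\}\le|f|\le\delta\max\{\|\fii_\iii'\|,\|\fii_\jjj'\|\}$ on the small ball $B(x_0,r)$ with $r=\delta^{1/(1+\alpha)}/(6H+8c)$, where the oscillation ($\lesssim\delta^{\alpha/(1+\alpha)}\max\cdot r\sim\delta\max$) is for once comparable to the value. Finally one composes on the right with $\fii_{\kkk|_n}$ chosen so that $\fii_{\kkk|_n}(V)\subset B(x_0,r)$ and $\|\fii_{\kkk|_n}'\|\approx r$: the \emph{new} pair $\iii\kkk|_n,\jjj\kkk|_n$ satisfies the two-sided bound on all of $V$ with the new parameter $\delta'=\delta^{\alpha/(1+\alpha)}$, which still tends to $0$ with $\eps$, and the norm comparability is checked at the end much as you sketch. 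This zoom-by-right-composition is the missing idea; without it, the oscillation obstacle you correctly identified is fatal to the argument as proposed.
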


\begin{proof}
  By the assumption, for every $\eps>0$ there are $\iii,\jjj \in \Sigma_*$ such that
  \begin{equation} \label{eq:not-ILC}
    0<\sup_{x \in F}|\fii_\iii(x)-\fii_\jjj(x)| \le \eps\max\{\|\fii_\iii'\|,\|\fii_\jjj'\|\}.
  \end{equation}
  Let $\alpha,c>0$ be as in Lemma \ref{thm:derivatives-holder}. Recalling that $V \supset F$ is open, we see that there exists $\eps_0>0$ such that $\eps_0^{1/(1+\alpha)} < \diam(F)$ and
  $B(x,\eps_0^{1/(1+\alpha)}) \subset V$ for all $x \in F$. Fix $0<\eps<\eps_0$ and let $\iii,\jjj
  \in \Sigma_*$ be such that \eqref{eq:not-ILC} holds. By compactness of $F$, the supremum in
  \eqref{eq:not-ILC} is attained by some $x_0 \in F$. To simplify notation, write
  $f(x)=\fii_\iii(x)-\fii_\jjj(x)$ and $\delta = |f(x_0)|/\max\{\|\fii_\iii'\|,\|\fii_\jjj'\|\}$.
  Note that 
  \begin{equation} \label{eq:delta}
    |f(x)| \le |f(x_0)| = \delta\max\{\|\fii_\iii'\|,\|\fii_\jjj'\|\} \le \eps\max\{\|\fii_\iii'\|,\|\fii_\jjj'\|\}
  \end{equation}
  for all $x \in F$ and, in particular, $0<\delta \le \eps$. 

  By the triangle inequality and Lemma \ref{thm:derivatives-holder}, we obtain
  \begin{equation} \label{eq:close-maps1}
  \begin{split}
    ||f'(y)|-|f'(x_0)|| &\le |f'(y) - f'(x_0)| \le |\fii_\iii'(y) - \fii_\iii'(x_0)| + |\fii_\jjj'(y) - \fii_\jjj'(x_0)| \\
    &\le c(\|\fii_\iii'\| + \|\fii_\jjj'\|)|y-x_0|^\alpha \le 2c\max\{\|\fii_\iii'\|,\|\fii_\jjj'\|\}|y-x_0|^\alpha
  \end{split}
  \end{equation}
  for all $y \in V$. Let $H \ge 1$ be as in \eqref{eq:unif-perfect}. We will next show that
  \begin{equation} \label{eq:close-maps2}
    |f'(x_0)| \le (3H+2c) \delta^{\alpha/(1+\alpha)} \max\{\|\fii_\iii'\|,\|\fii_\jjj'\|\}.
  \end{equation}
  To prove \eqref{eq:close-maps2}, we assume the opposite inequality for a contradiction. Since $F$ contains at least two points, it follows from Lemma \ref{thm:unif-perfect} that $F$ is uniformly perfect and there exists a point $z \in F \cap B(x_0,\delta^{1/(1+\alpha)}) \setminus B(x_0,\delta^{1/(1+\alpha)}/H)$. By convexity of $V$, the line connecting $x_0$ and $z$ is contained in $V$ and hence, $z_t = (1-t)x_0 +t z \in V$ for all $t \in [0,1]$. Recalling \eqref{eq:close-maps1}, we have
  \begin{equation} \label{eq:explicitder}
    |f'(z_t)-f'(x_0)| \leq 2c\max\{\|\fii_{\iii}'\|,\|\fii_{\jjj}'\|\}|z_t-x_0|^\alpha
    \leq 2c \delta^{\alpha/(1+\alpha)}\max\{\|\fii_{\iii}'\|,\|\fii_{\jjj}'\|\}.
  \end{equation}
  Define unit vectors $u$ and $v$ by setting $u = (z-x_0)/|z-x_0|$ and $v = f'(x_0){u}/|f'(x_0)|$. As $f'(y)$ is a similarity for all $y \in V$, we have $\la f'(x_0)u,v \ra = |f'(x_0)u|^2/|f'(x_0)| = |f'(x_0)|$ and $|f'(x_0)u-f'(z_t)u| = |f'(x_0)-f'(z_t)|$. Therefore, by the Cauchy-Schwarz inequality, the assumption that \eqref{eq:close-maps2} does not hold, and \eqref{eq:explicitder}, we have
  \begin{equation} \label{eq:productEst}
  \begin{split}
    \la f'(z_t)u,v \ra &= \la f'(x_0)u,v \ra - \la f'(x_0)u - f'(z_t)u,v \ra \\
    &\ge |f'(x_0)| - |f'(x_0)-f'(z_t)|
    \ge 3H \delta^{\alpha/(1+\alpha)} \max\{\|\fii_\iii'\|,\|\fii_\jjj'\|\}  
  \end{split}
  \end{equation}
  for all $t \in [0,1]$. Since, by conformality, $\la \nabla \la f(y),v \ra, u \ra = \la f'(y)u,v \ra$ for all $y \in V$, the fundamental theorem of calculus and the multivariate chain rule imply
  \begin{equation} \label{eq:gradient-rule}
  \begin{split}
    \la f(z),v \ra - \la f(x_0),v \ra &= \int_0^1 \tfrac{\mathrm{d}}{\mathrm{d}t} \la f(z_t),v \ra \,\mathrm{d}t
    = \int_0^1 \la \nabla \la f(z_t),v \ra, \tfrac{\mathrm{d}}{\mathrm{d}t} z_t \ra \,\mathrm{d}t \\ 
    &= |z-x_0| \int_0^1 \la \nabla \la f(z_t),v \ra, u \ra \,\mathrm{d}t
    = |z-x_0| \int_0^1 \la f'(z_t)u,v \ra \,\mathrm{d}t.
  \end{split}
  \end{equation}
  Hence, by the Cauchy-Schwarz inequality, \eqref{eq:gradient-rule}, and \eqref{eq:productEst},
  \begin{align*}
    |f(z)| &\ge \la f(z),v \ra = \la f(x_0),v \ra + |z-x_0| \int_0^1 \la f'(z_t)u,v \ra \,\mathrm{d}t \\ 
    &\ge -|f(x_0)| + \frac{\delta^{1/(1+\alpha)}}{H} 3H \delta^{\alpha/(1+\alpha)}
    \max\{\|\fii_\iii'\|,\|\fii_\jjj'\|\} \\
    &= 2\delta \max\{\|\fii_\iii'\|,\|\fii_\jjj'\|\} > |f(x_0)|.
  \end{align*}
  As this contradicts \eqref{eq:delta}, i.e.\ the maximality of $x_0$, we have shown \eqref{eq:close-maps2}.

  Combining \eqref{eq:close-maps1} and \eqref{eq:close-maps2}, we see that
  \begin{equation} \label{eq:close-maps3}
  \begin{split}
    |f'(y)| &\le |f'(x_0)| + 2c\max\{\|\fii_\iii'\|,\|\fii_\jjj'\|\}|y-x_0|^\alpha \\
    &\le (3H+4c)\delta^{\alpha/(1+\alpha)}\max\{\|\fii_\iii'\|,\|\fii_\jjj'\|\}
  \end{split}
  \end{equation}
  for all $y \in B(x_0,\delta^{1/(1+\alpha)})$. Write $r = \delta^{1/(1+\alpha)}/(6H+8c) \le \delta^{1/(1+\alpha)}$, fix $x \in B(x_0,r)$, and define $y_t = (1-t)x_0+tx$ for all $t \in [0,1]$. By the fundamental theorem of calculus, there exists $y \in B(x_0,r)$ such that, by \eqref{eq:close-maps3},
  \begin{equation} \label{eq:close-maps4}
  \begin{split}
    ||f(x)|-|f(x_0)|| &\le |f(x)-f(x_0)| = \biggl| \int_0^1 f'(y_t) \tfrac{\mathrm{d}}{\mathrm{d}t}y_t \,\mathrm{d}t \biggr| \leq |f'(y)||x-x_0| \\
    &\le (3H+4c)\delta^{\alpha/(1+\alpha)}\max\{\|\fii_\iii'\|,\|\fii_\jjj'\|\} r = \tfrac12 \delta \max\{\|\fii_\iii'\|,\|\fii_\jjj'\|\}.
  \end{split}
  \end{equation}
  Now \eqref{eq:delta} and \eqref{eq:close-maps4} imply
  \begin{equation} \label{eq:close-maps5}
    \tfrac12 \delta \max\{\|\fii_\iii'\|,\|\fii_\jjj'\|\} = |f(x_0)| - \tfrac12 \delta
    \max\{\|\fii_\iii'\|,\|\fii_\jjj'\|\} \le |f(x)| \le \delta \max\{\|\fii_\iii'\|,\|\fii_\jjj'\|\}
  \end{equation}
  for all $x \in B(x_0,r)$.

  Let $\kkk \in \Sigma$ be such that $\lim_{n\to\infty} \fii_{\kkk|_n}(x)=x_0$ for any
  $x\in V$ and choose $n \in \N$ such that $\diam(\fii_{\kkk|_n}(V))<r$ and
  $\diam(\fii_{\kkk|_{n-1}}(V))\ge r$. Note that $\fii_{\kkk|_n}(V) \subset B(x_0,r)$ and hence
  \eqref{eq:close-maps5} holds for all points in $\fii_{\kkk|_n}(V)$. By \eqref{eq:semiconformal-multi}, we
  have $K^{-2}\|\fii_\hhh'\|\|\fii_{\kkk|_n}'\| \le \|\fii_{\hhh\kkk|_n}'\| \le
  \|\fii_\hhh'\|\|\fii_{\kkk|_n}'\|$ for all $\hhh \in \Sigma_*$. Observe that, by
  \eqref{eq:semiconformal-diam},
  \begin{equation*}
    \frac{K^{-2}\min_{i \in \{1,\ldots,N\}}\|\fii_i'\|}{\diam(F)}r \le \|\fii_{\kkk|_n}'\| \le \frac{K}{\diam(F)}r.
  \end{equation*}
  Therefore, by \eqref{eq:close-maps5},
  \begin{align*}
    |f(\fii_{\kkk|_n}(x))| &\le \delta\max\{\|\fii_\iii'\|,\|\fii_\jjj'\|\} \le \delta
    K^2\|\fii_{\kkk|_n}'\|^{-1} \max\{\|\fii_{\iii\kkk|_n}'\|,\|\fii_{\jjj\kkk|_n}'\|\} \\
    &\le \frac{\delta K^4\diam(F)}{r\min_{i \in \{1,\ldots,N\}}\|\fii_i'\|}
    \max\{\|\fii_{\iii\kkk|_n}'\|,\|\fii_{\jjj\kkk|_n}'\|\} \\
    &\le \delta^{\alpha/(1+\alpha)} \frac{K^4(4H+2^{2+\alpha})\diam(F)}{\min_{i \in
      \{1,\ldots,N\}}\|\fii_i'\|} \max\{\|\fii_{\iii\kkk|_n}'\|,\|\fii_{\jjj\kkk|_n}'\|\}
  \end{align*}
  and
  \begin{equation*}
    |f(\fii_{\kkk|_n}(x))| \ge \delta^{\alpha/(1+\alpha)} K^{-1}(2H+2^{1+\alpha})\diam(F)
    \max\{\|\fii_{\iii\kkk|_n}'\|,\|\fii_{\jjj\kkk|_n}'\|\}
  \end{equation*}
  for all $x \in V$. Writing
  \begin{equation*}
    C = \max\biggl\{ \frac{K^4(4H+2^{2+\alpha})\diam(F)}{\min_{i \in \{1,\ldots,N\}}\|\fii_i'\|}, \frac{K}{(2H+2^{1+\alpha})\diam(F)} \biggr\},
  \end{equation*}
  we have thus shown that
  \begin{equation} \label{eq:almost-finished}
  \begin{split}
    C^{-1}\delta^{\alpha/(1+\alpha)} \max\{\|\fii_{\iii\kkk|_n}'\|,\|\fii_{\jjj\kkk|_n}'\|\} &\le
    |\fii_{\iii\kkk|_n}(x)-\fii_{\jjj\kkk|_n}(x)| \\
    &\le C\delta^{\alpha/(1+\alpha)}
    \max\{\|\fii_{\iii\kkk|_n}'\|,\|\fii_{\jjj\kkk|_n}'\|\}
  \end{split}
  \end{equation}
  for all $x \in V$.

  To finish the proof, fix $0<\eps'<\diam(F)/(4KC)$ and take $0<\eps<\eps_0$ such that
  $\eps^{\alpha/(1+\alpha)}<\eps'$. Let $0<\delta\le\eps$, $\iii' = \iii\kkk|_n$, and $\jjj' =
  \jjj\kkk|_n$ be so that \eqref{eq:almost-finished} holds and define $\delta' =
  \delta^{\alpha/(1+\alpha)} \le \eps^{\alpha/(1+\alpha)} < \eps'$. Notice that, by
  \eqref{eq:semiconformal-diam}, the triangle inequality, and \eqref{eq:almost-finished},
  \begin{align*}
    \|\fii_{\jjj'}'\| &\le \frac{K}{\diam(F)}\diam(\fii_{\jjj'}(F)) \\
    &\le \frac{K}{\diam(F)}\bigl(\diam(\fii_{\iii'}(F)) + 2C\delta'\max\{\|\fii_{\iii'}'\|,\|\fii_{\jjj'}'\|\}\bigr) \\
    &\le K\|\fii_{\iii'}'\| + \frac{2KC\eps'}{\diam(F)}\max\{\|\fii_{\iii'}'\|,\|\fii_{\jjj'}'\|\}.
  \end{align*}
  Therefore, if $\|\fii_{\jjj'}'\| \ge \|\fii_{\iii'}'\|$, we have
  \begin{equation*}
    \|\fii_{\jjj'}'\| \le \frac{K\diam(F)}{\diam(F)-2KC\eps'}\|\fii_{\iii'}'\| \le 2K\|\fii_{\iii'}'\|
  \end{equation*}
  and similarly the other way around. By \eqref{eq:almost-finished}, we now have
  \begin{equation*}
    C^{-1}\delta'\max\{\|\fii_{\iii'}'\|,\|\fii_{\jjj'}'\|\} \le |\fii_{\iii'}(x)-\fii_{\jjj'}(x)| \le 2KC\delta'\min\{\|\fii_{\iii'}'\|,\|\fii_{\jjj'}'\|\}
  \end{equation*}
  for all $x\in V$, which is what we wanted to show.
\end{proof}

\begin{proposition} \label{thm:WSC-implies-ILC}
  Let $F \subset \R^d$ be a self-conformal set containing at least two points. If $F$ satisfies the
  weak separation condition, then it satisfies the identity limit criterion.
\end{proposition}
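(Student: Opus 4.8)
The plan is to prove the contrapositive: assuming that $F$ does not satisfy the identity limit criterion, I construct, for every $n \in \N$, a point $x$ and a radius $r>0$ with $\#\Phi(x,r) \ge n$, so that the weak separation condition fails. The engine is Lemma \ref{thm:close-maps}, which supplies a constant $C \ge 1$ such that for arbitrarily small $\delta>0$ there are $\iii,\jjj \in \Sigma_*$ with $\fii_\iii|_F \ne \fii_\jjj|_F$ and
\begin{equation*}
  C^{-1}\delta\max\{\|\fii_\iii'\|,\|\fii_\jjj'\|\} \le |\fii_\iii(x)-\fii_\jjj(x)| \le C\delta\min\{\|\fii_\iii'\|,\|\fii_\jjj'\|\}
\end{equation*}
for all $x \in V$. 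Rather than extracting high multiplicity from a single such pair, I would build it up by an inductive \emph{doubling} procedure. The invariant maintained at stage $k$ is a family $\gamma_1,\ldots,\gamma_{2^k} \in \Sigma_*$ for which the maps $\fii_{\gamma_l}|_F$ are pairwise distinct, the pieces $\fii_{\gamma_l}(F)$ have diameters comparable to a scale $\rho_k$ (within a fixed factor) and all lie in a common ball of radius $\approx \rho_k$, and, crucially, the maps are \emph{uniformly separated}: $\inf_{y \in F}|\fii_{\gamma_l}(y)-\fii_{\gamma_{l'}}(y)| \ge \eta_k\rho_k$ for $l \ne l'$, with $\eta_k>0$.

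For the base case $k=0$ I take $\gamma_1=\iii$, $\gamma_2=\jjj$ from one application of Lemma \ref{thm:close-maps}: the upper bound forces the two pieces into a common ball of radius comparable to $\|\fii_\iii'\|\diam(F)$, and the lower bound gives the uniform separation. For the doubling step, given the stage-$k$ family, I apply Lemma \ref{thm:close-maps} again to get a \emph{fresh} pair $\iii_k,\jjj_k$ whose relative scale $\delta_k$ is chosen far below the current separation, say $\delta_k < \eta_k\rho_k/(4CK)$, and form the $2^{k+1}$ prepended words $\{\iii_k\gamma_l,\jjj_k\gamma_l\}_l$. Distinctness of the doubled family is the heart of the matter. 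Two prepended words with the same leading factor reduce, via injectivity of $\fii_{\iii_k}$ and the lower bound in \eqref{eq:semiconformal}, to the stage-$k$ separation $\gtrsim \|\fii_{\iii_k}'\|\eta_k\rho_k$; the pair $\iii_k\gamma_l$ and $\jjj_k\gamma_l$ sharing the same $\gamma_l$ is separated by $\ge C^{-1}\delta_k\|\fii_{\iii_k}'\|>0$ directly from the lower bound of Lemma \ref{thm:close-maps} evaluated at the points $\fii_{\gamma_l}(y) \in V$; and for $\iii_k\gamma_l$ versus $\jjj_k\gamma_{l'}$ with $l \ne l'$ the triangle inequality pits the inherited separation $\gtrsim \|\fii_{\iii_k}'\|\eta_k\rho_k$ against the error $\le C\delta_k\|\fii_{\iii_k}'\|$ of replacing $\iii_k$ by $\jjj_k$, and the choice of $\delta_k$ keeps the former dominant. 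This simultaneously yields distinctness and the new uniform separation $\eta_{k+1}\rho_{k+1} \approx C^{-1}\delta_k\|\fii_{\iii_k}'\|$.

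It remains to check that the doubled family stays clustered at a single scale. Since every $\fii_{\gamma_l}(y)$ lies in the stage-$k$ ball and $|\fii_{\iii_k}(w)-\fii_{\jjj_k}(w)| \le C\delta_k\|\fii_{\iii_k}'\| \ll \|\fii_{\iii_k}'\|\rho_k$, both prepended groups land in one ball of radius $\approx \|\fii_{\iii_k}'\|\rho_k =: \rho_{k+1}$, and \eqref{eq:semiconformal-diam} shows the new pieces again have diameter $\approx \rho_{k+1}$. Keeping the diameters comparable \emph{within an absolute factor}, uniformly in $k$, is the one point that needs care: the prepended map distorts the various branches by a factor $1+O(\rho_k^\alpha)$ only, because the whole stage-$k$ cluster is tiny and the derivatives are H\"older (Lemma \ref{thm:derivatives-holder}), while the discrepancy between the $\iii_k$- and $\jjj_k$-branches is governed by $\|\fii_{\iii_k}'\|/\|\fii_{\jjj_k}'\| = 1+O(\delta_k^{\alpha/(1+\alpha)})$, which follows from the uniform closeness of the two maps. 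As $\rho_k$ and $\delta_k$ decay geometrically, the product of these per-stage factors converges, so the diameters stay within one absolute constant for all $k$.

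Carrying this out, after $k$ steps I have $2^{k+1}$ distinct maps whose pieces have diameter comparable to $\rho_k$ and meet a common ball of radius $\approx \rho_k$. Passing to the stopping words at scale $r \approx \rho_k$ (the diameters being comparable, this costs only a fixed multiplicative constant) gives $\#\Phi(x,r) \gtrsim 2^{k+1}$, and letting $k \to \infty$ contradicts the weak separation condition. The hard part, as indicated, is the distinctness bookkeeping in the doubling step: one must rule out that prepending the close pair ever merges two branches, and this is exactly where the two-sided estimate of Lemma \ref{thm:close-maps} is indispensable — the lower bound both separates the same-index pair and, played against the inherited separation, keeps every cross-index pair apart, provided each new relative scale $\delta_k$ is taken below the current separation.
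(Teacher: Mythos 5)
Your overall strategy is the paper's: prove the contrapositive, and drive the construction with Lemma \ref{thm:close-maps}, always taking the new relative scale $\delta_k$ far below the current separation. The combinatorics differ, though. The paper builds one spine $\iii = \iii_n\cdots\iii_1$ and takes the $n$ single-position substitutions $\kkk_m = \iii_n\cdots\iii_{m+1}\jjj_m\iii_{m-1}\cdots\iii_1$; since every $\fii_{\kkk_m}$ is then sup-close to the single map $\fii_\iii$ at relative scale $\lesssim \delta_1$, the diameters of all the pieces are automatically comparable within a factor $1+o(1)$ (this is \eqref{eq:WSC-ILC7}--\eqref{eq:WSC-ILC8}) and no inductive bookkeeping is needed, whereas your binary tree produces $2^k$ maps from $k$ applications of the lemma at the cost of maintaining the cluster/separation/diameter invariants by hand. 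Your distinctness analysis of the doubling step itself is sound and parallels \eqref{eq:not-same-with-lll}. One detail is overstated: the claim $\|\fii_{\iii_k}'\|/\|\fii_{\jjj_k}'\| = 1+O(\delta_k^{\alpha/(1+\alpha)})$ does not follow from sup-closeness of the maps (the paper only extracts $\|\fii_{\jjj'}'\| \le 2K\|\fii_{\iii'}'\|$ at the end of the proof of Lemma \ref{thm:close-maps}); however, the diameter control you actually need follows directly from the sup-bound $|\fii_{\iii_k}(x)-\fii_{\jjj_k}(x)| \le C\delta_k\|\fii_{\iii_k}'\|$, which perturbs all pairwise distances, hence diameters of images of a fixed set, by a relative error $O(\delta_k/\rho_k)$, so this is repairable.

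The genuine gap is the final step, which you dispose of in one sentence: ``passing to the stopping words at scale $r\approx\rho_k$ \ldots costs only a fixed multiplicative constant.'' Membership in $\Phi(x,r)$ requires $\diam(\fii_\iota(F)) \le r < \diam(\fii_{\iota^-}(F))$, and $\#\Phi(x,r)$ counts \emph{distinct restrictions to $F$} of the stopped words. Your separation invariant only guarantees $\fii_{\gamma_l\mmm}|_F \ne \fii_{\gamma_{l'}\mmm}|_F$ for a \emph{common} suffix $\mmm$; if each branch is extended to its own stopping word $\gamma_l\mmm_l$ with the suffixes $\mmm_l$ differing, nothing in your invariants rules out $\fii_{\gamma_l}\circ\fii_{\mmm_l} = \fii_{\gamma_{l'}}\circ\fii_{\mmm_{l'}}$ on $F$ (in an overlapping system distinct prefixes with distinct suffixes can yield identical maps), so the count can collapse. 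Moreover, the stopping condition interacts badly with conformality: appending a letter need not strictly decrease the diameter at all --- the paper constructs Example \ref{ex:shortword} precisely to show $\diam(\fii_{32}(F)) = \diam(\fii_{3}(F))$ can occur --- so even producing, for each branch, a nonempty suffix of uniformly bounded length that stops at a common $r$ below all $\diam(\fii_{\gamma_l}(F))$ requires an argument. The paper's device fixes both problems at once: choose $q$ by \eqref{eq:choice-of-q} so that appending $\lll(q)=1\cdots1$ contracts diameters by enough to beat the small cross-branch diameter discrepancy, set $r = \max_m \diam(\fii_{\kkk_m\lll(q)}(F))$, extend every branch by suffixes from the single-letter family $\lll(p)$, $p \in \{1,\ldots,q\}$, and pigeonhole on $p_m$: the branches sharing a common $p$ keep a common suffix, hence stay distinct, at the affordable price of a factor $q$ (giving $\#\Phi(x,r) \ge \lceil n/q \rceil$). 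Your construction is compatible with this device, and you would also then need to verify the intersection condition $\fii_{\gamma_l\lll(p)}(F) \cap B(x,r) \ne \emptyset$, which holds because the stopped points are images of a common point under maps whose pointwise spread you have kept small relative to $\rho_k$ --- but as written, the stopping-word step is exactly the delicate part of this proposition and it is missing from your argument.
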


\begin{proof}
  Suppose to the contrary that $F$ does not satisfy the identity limit criterion. Let $C \ge 1$ be
  as in Lemma \ref{thm:close-maps} and $K \ge 1$ as in Lemma \ref{thm:semiconformal}. 
  For each $q \in \N$ write $\lll(q) = 1 \cdots 1 \in \Sigma_q$ and $\eps(q) = \tfrac{2}{3} C K^{-2}\|\fii'_{\lll(q)}\|\diam(F)/q > 0$. Choose $q\in\N$ to be the smallest integer for which
  \begin{equation} \label{eq:choice-of-q}
    \frac{K}{\diam(F)} \max_{\jjj \in \Sigma_q} \diam(\fii_\jjj(F)) < \frac{3q-2}{3q+2} =
    \frac{C\|\fii'_{\lll(q)}\|\diam(F)-K^2\eps(q)}{C\|\fii'_{\lll(q)}\|\diam(F)+K^2\eps(q)}.
  \end{equation}
  We will prove that $F$ does not satisfy the weak separation condition by showing that for each $n \in \N$ there exist $x \in F$ and $r>0$ such that $\# \Phi(x,r) \ge \lceil n/q \rceil$.

  Fix $n \in \N$ and write $\eps_1 = \eps(q)$. Since $F$ contains at least two points and does not satisfy the identity limit criterion, Lemma \ref{thm:close-maps} implies that there exist $0<\delta_1<\eps_1$ and $\iii_1,\jjj_1 \in \Sigma_*$ such that
  \begin{equation*}
    C^{-1}\delta_1 \|\fii_{\iii_1}'\| \le |\fii_{\iii_1}(x)-\fii_{\jjj_1}(x)| \le C\delta_1 \|\fii_{\iii_1}'\|
  \end{equation*}
  for all $x \in V$. We will choose $\delta_k>0$ and $\iii_k,\jjj_k \in \Sigma_*$, $k \in
  \{1,\ldots,n\}$, inductively. Assuming $0<\delta_{k-1}<\eps_{k-1}<1$ and $\iii_{k-1},\jjj_{k-1}
  \in \Sigma_*$ have already been chosen for some $k \in \{2,\ldots,n\}$, let us fix $0 < \eps_{k} <
  (2K^5C^2)^{-1}\delta_{k-1}\|\fii_{\iii_{k-1}}'\|$. By Lemma \ref{thm:close-maps}, we then choose
  $0<\delta_{k}<\eps_{k}$ and $\iii_{k},\jjj_{k} \in \Sigma_*$ such that
  \begin{equation} \label{eq:WSC-ILC1}
    C^{-1}\delta_{k} \|\fii_{\iii_{k}}'\| \le |\fii_{\iii_{k}}(x)-\fii_{\jjj_{k}}(x)| \le C\delta_{k} \|\fii_{\iii_{k}}'\|
  \end{equation}
  for all $x \in V$.

  Define $\iii = \iii_n\cdots\iii_1$ and $\kkk_m =
  \iii_n\cdots\iii_{m+1}\jjj_m\iii_{m-1}\cdots\iii_1$ for all $m \in \{1,\ldots,n\}$. Fix $m,l \in
  \{1,\ldots,n\}$ such that $m \ne l$ and notice that we may assume $l<m$, relabeling if necessary. We claim that
  \begin{equation} \label{eq:not-same-with-lll}
    \fii_{\kkk_m\mmm}|_F \ne \fii_{\kkk_l\mmm}|_F
  \end{equation}
  for all $\mmm \in \Sigma_*$.
  By \eqref{eq:semiconformal-multi}, we have $K^{-2}\|\fii_\kkk'\|\|\fii_\jjj'\| \le \|\fii_{\kkk\jjj}'\|
  \le \|\fii_\kkk'\|\|\fii_\jjj'\|$ for all $\kkk,\jjj \in \Sigma_*$. Therefore, by
  \eqref{eq:semiconformal} and \eqref{eq:WSC-ILC1}, we have
  \begin{align*}
    |\fii_\iii(x)-\fii_{\kkk_l}(x)| &\ge K^{-1} \|\fii_{\iii_n\cdots\iii_{l+1}}'\| |\fii_{\iii_l}(\fii_{\iii_{l-1}\cdots\iii_1}(x)) - \fii_{\jjj_l}(\fii_{\iii_{l-1}\cdots\iii_1}(x))| \\
    &\ge (KC)^{-1} \delta_l \|\fii_{\iii_n\cdots\iii_{l+1}}'\| \|\fii_{\iii_l}'\| \ge (KC)^{-1} \delta_l \|\fii_{\iii_n\cdots\iii_l}'\|
  \end{align*}
  and, as $\delta_m < \eps_m < (2K^5C^2)^{-1}\delta_{m-1}\|\fii_{\iii_{m-1}}'\| < \cdots < (2K^5C^2)^{l-m} \delta_l \|\fii_{\iii_{m-1}}'\| \cdots \|\fii_{\iii_{l}}'\|$, also
  \begin{equation} \label{eq:WSC-ILC2}
  \begin{split}
    |\fii_\iii(x)-\fii_{\kkk_m}(x)| &\le K^2C \delta_m \|\fii_{\iii_n\cdots\iii_{m}}'\| \le K^2C (2K^5C^2)^{l-m} \delta_l \|\fii_{\iii_n\cdots\iii_{m}}'\| \|\fii_{\iii_{m-1}}'\| \cdots \|\fii_{\iii_{l}}'\| \\
    &\le K^2C (2K^3C^2)^{l-m} \delta_l \|\fii_{\iii_n\cdots\iii_{l}}'\| \le (2KC)^{-1} \delta_l \|\fii_{\iii_n\cdots\iii_{l}}'\|
  \end{split}
  \end{equation}
  for all $x \in V$. Since now
  \begin{align*}
    |\fii_{\kkk_l}(x) - \fii_{\kkk_m}(x)| &\ge
    ||\fii_\iii(x)-\fii_{\kkk_l}(x)|-|\fii_\iii(x)-\fii_{\kkk_m}(x)|| \nonumber\\
    &\ge (KC)^{-1} \delta_l \|\fii_{\iii_n\cdots\iii_l}'\| - (2KC)^{-1} \delta_l
    \|\fii_{\iii_n\cdots\iii_{l}}'\| \nonumber\\
    &= (2KC)^{-1} \delta_l \|\fii_{\iii_n\cdots\iii_{l}}'\| > 0
  \end{align*}
  for all $x \in V$, we see that $|\fii_{\kkk_l\mmm}(x) - \fii_{\kkk_m\mmm}(x)| > 0$ for all $x \in \fii_\mmm(V)$ and $\mmm \in \Sigma_*$. Therefore, \eqref{eq:not-same-with-lll} holds and, in particular, the set
  \begin{equation} \label{eq:Phi-has-n-elements}
    \Psi_p := \{ \fii_{\kkk_m\lll(p)}|_F : m \in \{1,\ldots,n\} \}
  \end{equation}
  has $n$ elements for all $p \in \{1,\ldots,q\}$.
  
  Let $r = \max_{m\in\{1,\ldots,n\}}\diam(\fii_{\kkk_m\lll(q)}(F))$ and $x = \fii_{\iii\lll(q)}(x_0)$, where $x_0
  \in F$. We will next show that
  \begin{equation} \label{eq:prop75-goal}
    \diam(\fii_{\kkk_m\lll(q)}(F)) \le r < \diam(\fii_{\kkk_m}(F)) \quad \text{and} \quad
    \fii_{\kkk_m\lll(q)}(F) \cap B(x,r) \ne \emptyset
  \end{equation}
  for all $m \in \{1,\ldots,n\}$. To that end, fix $m \in \{1,\ldots,n\}$. Choosing $y,z \in F$ such that
  $|\fii_{\kkk_m\lll(q)}(y)-\fii_{\kkk_m\lll(q)}(z)| = \diam(\fii_{\kkk_m\lll(q)}(F))$, we see, by \eqref{eq:semiconformal}, \eqref{eq:semiconformal-diam}, and \eqref{eq:choice-of-q}, that
  \begin{equation} \label{eq:WSC-ILC6}
  \begin{split}
    \diam(\fii_{\kkk_m\lll(q)}(F)) &\le \|\fii_{\kkk_m}'\| |\fii_{\lll(q)}(y)-\fii_{\lll(q)}(z)| \\
    &\le \frac{K}{\diam(F)} \diam(\fii_{\kkk_m}(F)) \diam(\fii_{\lll(q)}(F)) \\
    &< \frac{C\|\fii'_{\lll(q)}\|\diam(F)-K^2\eps(q)}{C\|\fii'_{\lll(q)}\|\diam(F)+K^2\eps(q)}
    \diam(\fii_{\kkk_m}(F)).
  \end{split}
  \end{equation}
  Note that \eqref{eq:WSC-ILC2} implies
  \begin{equation*}
    |\fii_\iii(x)-\fii_{\kkk_l}(x)| \le (2KC)^{-1} \eps(q) \|\fii_{\iii}'\|
  \end{equation*}
  for all $x \in V$ and $l \in \{1,\ldots,n\}$. Therefore, by the triangle inequality, \eqref{eq:semiconformal-multi}, and \eqref{eq:semiconformal-diam},
  \begin{equation} \label{eq:WSC-ILC7}
  \begin{split}
    \diam(\fii_{\kkk_l\lll(q)}(F)) &\le \diam(\fii_{\iii\lll(q)}(F)) + (KC)^{-1}\eps(q)\|\fii_\iii'\| \\
    &\le \biggl(1+\frac{K^2\eps(q)}{C\|\fii_{\lll(q)}'\|\diam(F)} \biggr) \diam(\fii_{\iii\lll(q)}(F))
  \end{split}
  \end{equation}
  for all $l \in \{1,\ldots,n\}$. Since, similarly,
  \begin{equation} \label{eq:WSC-ILC8}
    \diam(\fii_{\iii\lll(q)}(F)) \le \diam(\fii_{\kkk_m\lll(q)}(F)) +
    \frac{K^2\eps(q)}{C\|\fii_{\lll(q)}'\|\diam(F)}
    \diam(\fii_{\iii\lll(q)}(F)),
  \end{equation}
  we conclude, by \eqref{eq:WSC-ILC6}, \eqref{eq:WSC-ILC8}, and \eqref{eq:WSC-ILC7}, that
  \begin{equation*}
    \diam(\fii_{\kkk_m}(F)) > \max_{l\in\{1,\ldots,n\}}\diam(\fii_{\kkk_l\lll(q)}(F)) = r \ge
    \diam(\fii_{\kkk_m\lll(q)}(F))
  \end{equation*}
  as desired. Observe that the role of $q$ is to guarantee the strict inequality above -- because of
  conformality, it might happen that $\diam(\fii_\kkk(F)) = \diam(\fii_{\kkk^-}(F))$ for some $\kkk
  \in \Sigma_*$; see Example~\ref{ex:shortword}. Finally, note that \eqref{eq:WSC-ILC2}, the choice of $\eps(q)$, \eqref{eq:semiconformal-multi}, \eqref{eq:semiconformal-diam}, and \eqref{eq:WSC-ILC8} give us
  \begin{align*}
    |x-\fii_{\kkk_m\lll(q)}(x_0)| &= |\fii_{\iii}(\fii_{\lll(q)}(x_0))-\fii_{\kkk_m}(\fii_{\lll(q)}(x_0))| \le (2KC)^{-1}\eps(q)\|\fii_\iii'\| \\
    &= \tfrac13 K^{-3}\|\fii'_{\lll(q)}\|\|\fii_\iii'\|\diam(F) \le \tfrac13 K^{-1}\|\fii'_{\iii\lll(q)}\|\diam(F)\\
    &\le \tfrac13 \diam(\fii_{\iii\lll(q)}(F))\leq r
  \end{align*}
  yielding $\fii_{\kkk_m\lll(q)}(F) \cap B(x,r) \ne \emptyset$ as desired.

  Because of the length difference $q$, we cannot directly apply \eqref{eq:prop75-goal} in the definition of the weak separation condition. But relying on \eqref{eq:prop75-goal}, we see that for each $m \in \{1,\ldots,n\}$ there is $p_m \in \{1,\ldots, q\}$ such that $\diam(\fii_{\kkk_m\lll(p_m)}(F)) \le r < \diam(\fii_{\kkk_m\lll(p_m-1)}(F))$ and $\fii_{\kkk_m\lll(p_m)}(F) \cap B(x,r) \ne \emptyset$. Hence,
  \begin{equation*}
    \Phi_p := \{ \fii_{k_m\lll(p)}|_F : m \in \{1,\ldots,n\} \text{ and } p_m = p \} \subset \Phi(x,r)
  \end{equation*}
  for all $p \in \{1,\ldots,q\}$. By \eqref{eq:Phi-has-n-elements}, we have $\Phi_p \subset \Psi_p$, $\#\Phi_p = \#\{m \in \{1,\ldots, n\} : p_m=p\}$, and $\#\Psi_p = n$ for all $p \in \{1,\ldots,q\}$. Since the function $m \mapsto p_m$ is from $\{1,\dots,n\}$ to $\{1,\dots,q\}$, there exists $p \in \{1,\ldots, q\}$ such that $\#\Phi_p \geq \lceil n/q \rceil$. Therefore, we have shown that $\#\Phi(x,r) \ge \lceil n/q \rceil$ and finished the proof.
\end{proof}

We finish the section with two examples. The first one verifies the need to use $q$ in \eqref{eq:prop75-goal} and the second one examines the difference between the original definition of the weak separation condition and our definition.

\begin{example}\label{ex:shortword}
  We exhibit a conformal iterated function system $\{\fii_1,\fii_2,\fii_3\}$ on $\R^2$ for which the associated self-conformal set $F \subset \R^2$ satisfies $\diam(\fii_\iii(F)) = \diam(\fii_{\iii^-}(F))$ for $\iii = 32 \in \Sigma_*$.

  Using complex notation, we define
  \begin{equation*}
    \fii_1(z) = \tfrac{1}{1000}z - \tfrac{9}{10}, \quad
    \fii_2(z) = \tfrac{19}{20}iz, \quad
    \fii_3(z) = \tfrac{z}{2(z-2i)}
  \end{equation*}
  for all $z \in \C$. The mapping $\fii_1$ is a strongly contracting homothety, $\fii_2$ is a weakly contracting similarity that involves a rotation by $\tfrac{\pi}{2}$, and $\fii_3$ is a M\"obius transformation with singularity at $2i$. Therefore, all the mappings are injective and holomorphic on $\C \setminus \{2i\}$. To see that their collection is a conformal iterated function system, it is enough to verify that there exists a bounded open convex set $\Omega \subset \C$ such that $\overline{\fii_j(\Omega)} \subset \Omega$ and $\|\fii_j'\| = \sup_{z \in \Omega}|\fii_j'(z)| < 1$ for all $j \in \{1,2,3\}$.

  Write $r_0 = \tfrac{901}{1000}$ and define $\Omega = B^o(0,r_0)$, where $B^o(z,r)$ is an open ball
  centered at $z \in \C$ with radius $r>0$. Note that the singularity $2i$ is not contained in the
  closure of $\Omega$ and hence, each of the mappings $\fii_j$ maps balls in $\Omega$ onto balls. A
  simple calculation shows that $\fii_1(\Omega) = B^o(c_1,r_1)$, where $c_1 = -\tfrac{9}{10}$ and
  $r_1 = \tfrac{901}{10^6}$. Since $|c_1-r_1|<r_0$, we see that $\overline{\fii_1(\Omega)} \subset
  \Omega$. Similarly, we see that $\overline{\fii_2(\Omega)} = B(0,\tfrac{19}{20}r_0) \subset
  \Omega$. We determine $\fii_3(\Omega)$ by looking at the images of $r_0$, $ir_0$, and $-r_0$ from
  the boundary of $\Omega$. Indeed, these three points uniquely describe a circle and hence the ball
  $\fii_3(\Omega) = B^o(c_3,r_3)$. The center point $c_3 = -\tfrac{811801}{6376398}$ can be
  calculated from the equations
  \begin{equation*}
    |c_3-\fii_3(r_0)| = |c_3-\fii_3(ir_0)| = |c_3-\fii_3(-r_0)|,
  \end{equation*}
  where each of the value is the radius $r_3 = \tfrac{901000}{3188199}$. Since $|c_3|+r_3 < r_0$, we
  see that also $\overline{\fii_3(\Omega)} \subset \Omega$. Furthermore, a direct calculation shows
  that $\fii_1'(z) = \tfrac{1}{1000}$, $\fii_2'(z) = \tfrac{19}{20}i$, and $\fii_3'(z) =
  -\tfrac{i}{(z-2i)^2}$ for all $z \in \C \setminus \{2i\}$. Therefore, as $|\fii_3(z)| \le
  (2-r_0)^{-2} < 1$ for all $z \in \Omega$, we have $\|\fii_j'\| < 1$ for all $j \in \{1,2,3\}$. The
  collection $\{\fii_1,\fii_2,\fii_3\}$ is thus a conformal iterated function system. Let $F \subset
  \C$ be the associated self-conformal set.

  Since $\fii_{32}(F) \subset \fii_3(F)$, to see that the diameters are equal, it suffices to prove that $\diam(\fii_3(F)) \le \diam(\fii_{32}(F))$. Let $w = -\tfrac{100}{111} \in F$ be the fixed point of $\fii_1$. Defining $q_1 = \fii_{32}(w) = \tfrac{95}{634} \in F$ and $q_2 = \fii_{3222}(w) = -\tfrac{6859}{21802} \in F$, it follows that
  \begin{equation*}
    \diam(\fii_{32}(F)) \ge |q_1-q_2| = \tfrac{1604949}{3455617}.
  \end{equation*}
  Showing that this number is an upper bound for $\diam(\fii_3(F))$ will thus finish the proof. Calculating as before, we see that $\fii_1(B(0,w)) = B(-\tfrac{9}{10},\tfrac{1}{1110}) \subset B(0,w)$, $\fii_2(B(0,w)) = B(0,\tfrac{95}{111}) \subset B(0,w)$, and $\fii_3(B(0,w)) = B(-\tfrac{1250}{9821},\tfrac{2775}{9821}) \subset B(0,w)$. Therefore, $F \subset B(0,w)$. Write $\Gamma = \{31, 33, 321, 323, 3221, 3223, 32221, 32222, 32223\} \subset \Sigma_*$ and note that $\fii_3(F) \subset \bigcup_{\iii \in \Gamma} \fii_\iii(B(0,w))$. For each $\iii \in \Gamma$, let $c_\iii$ be the center and $r_\iii$ the radius of the ball $\fii_\iii(B(0,w))$. Numerical calculations show that
  \begin{equation*}
    \diam(\fii_3(F)) \le \diam\biggl( \bigcup_{\iii\in\Gamma} \fii_\iii(B(0,w)) \biggr) = \max_{\iii,\jjj\in\Gamma} \{|c_\iii-c_\jjj|+r_\iii+r_\jjj\} = \tfrac{1604949}{3455617}
  \end{equation*}
  as required.
\end{example}

\begin{example} \label{ex:lau-ngai-wang-ex}
  We exhibit a conformal iterated function system $\{\fii_1,\fii_2,\fii_3\}$ on $\R$ for which the associated self-conformal set $F \subset \R$ satisfies the weak separation condition but has
  \begin{equation} \label{eq:lau-ngai-wang-wsc}
    \sup\{\#\Phi^*(x,r) : x \in F \text{ and } r>0\} = \infty,
  \end{equation}
  where
  \begin{equation*}
    \Phi^*(x,r) = \{ \fii_\iii : \diam(\fii_\iii(F)) \le r < \diam(\fii_{\iii^-}(F)) \text{ and } \fii_\iii(F) \cap B(x,r) \ne \emptyset \}
  \end{equation*}
  for all $x \in \R$ and $r>0$. Since, by \cite[Remark 3.7(1)]{KaenmakiRossi2016}, the condition
  $\sup\{\#\Phi^*(x,r) : x \in F \text{ and } r>0\} < \infty$ is equivalent to the original
  definition of Lau, Ngai, and Wang \cite{LauNgaiWang2009}, we see that our definition is strictly
  weaker in the non-analytic case.

  Let
  \begin{equation*}
    g(x) =
    \begin{cases}
      \tfrac{1}{180}(9x^2 - 6x + 1),    &\text{if } \tfrac{1}{3} < x < \tfrac{5}{12}, \\
      -\tfrac{1}{180}(9x^2 - 9x + \tfrac{17}{8}), &\text{if } \tfrac{5}{12} \le x < \tfrac{7}{12}, \\
      \tfrac{1}{120}(6x^2 -8x + \tfrac{8}{3}),   &\text{if } \tfrac{7}{12} \le x < \tfrac{2}{3}, \\
      0, &\text{otherwise},
    \end{cases}
  \end{equation*}
  and notice that $0<g(x)\le 1/2880$ for all $x \in (\tfrac13, \tfrac23)$ and $g$ is continuously
  differentiable such that $0<g'(x)\le 1/120$ for all $x \in (\tfrac13,\tfrac12)$ and $-1/120\le
  g'(x)<0$ for all $x \in (\tfrac12,\tfrac23)$. In fact, $g'$ is a piecewise linear continuous
  function and hence H\"older continuous. Define
  \begin{equation*}
    \fii_1(x) = \tfrac13 x, \quad
    \fii_2(x) = \tfrac13 x + \tfrac23, \quad
    \fii_3(x) = \tfrac13 x + g(x),
  \end{equation*}
  for all $x \in \R$ and notice that each $\fii_i$ is a strictly increasing $C^{1+\alpha}$-function.
  The collection $\{\fii_1,\fii_2,\fii_3\}$ is therefore a conformal iterated function system and
  since $\fii_1|_F = \fii_3|_F$, the associated self-conformal set is the standard $\tfrac13$-Cantor set $F$. Note that
  also $\{\fii_1,\fii_2\}$ defines $F$ and it is well known that $F$, defined by these two maps,
  satisfies the open set condition. Therefore, as $\fii_1|_F = \fii_3|_F$,
  the set $F$, defined by all three maps, satisfies the weak separation condition.

  To see that \eqref{eq:lau-ngai-wang-wsc} holds, observe first that $\diam(\fii_\iii(F)) = 3^{-n}$
  for all $\iii \in \Sigma_n$ and $n \in \N$. Let $\iii(k) = i_1i_2\cdots$ be the word in $\Sigma$
  such that $i_k = 3$ and $i_j = 1$ for all $j \in \N \setminus \{k\}$. Note that $0 \in
  \fii_{\iii(k)|_n}(F)$ and
  \begin{equation*}
    \fii_{\iii(k)|_n}(x) = 3^{-n}x + 3^{-k+1}g(3^{-n+k}x)
  \end{equation*}
  for all $x \in \R$, $k \in \{1,\ldots,n\}$, and $n \in \N$. Therefore, $\fii_{\iii(k)|_n} \ne
  \fii_{\iii(m)|_n}$ for all $k,m \in \{1,\ldots,n\}$ with $k \ne m$ and $\Phi^*(0,3^{-n})$ has at
  least $n$ elements for all $n \in \N$.
\end{example}

\section{Proof of Theorem \ref{thm:not-wsc-assouad-one}} \label{sec:not-wsc-assouad-one}

Let $E \subset \R$ be a compact set. For each $x\in \R$ and $r>0$ we define the \emph{magnification} $M_{x,r} \colon \R \to \R$ by setting
\begin{equation*}
  M_{x,r}(z) = \frac{z-x}{r}
\end{equation*}
for all $z \in \R$. We say that $T \subset [-1,1]$ is a \emph{weak tangent} of $E$ if there exist sequences $(x_n)_{n\in\N}$ of points in $\R$ and $(r_n)_{n \in \N}$ of positive real numbers such that $M_{x_n,r_n}(E) \cap [-1,1] \to T$ in Hausdorff distance. Recall that a sequence $(E_n)_{n\in\N}$ of closed subsets of $[-1,1]$ converges to $T$ in Hausdorff distance if
\begin{equation*}
  \lim_{n \to \infty} \sup_{x \in E_n} \dist(x,T) = 0
\quad\text{and}\quad
  \lim_{n \to \infty} \sup_{y \in T} \dist(y,E_n) = 0.
\end{equation*}
If $T$ is a weak tangent of $E$, then it is straightforward to see that $\dimh(T) \le \dima(E)$; see \cite[Proposition 6.1.5]{MackayTyson}.

\begin{proof}[Proof of Theorem \ref{thm:not-wsc-assouad-one}]
  By the above discussion, it suffices to show that there is a constant $D' \ge 1$ such that for every $n \in \N$ there exist $x \in \R$, $r>0$, and points $x_n < x_{n-1} < \cdots < x_1$ in $F$ such that $M_{x,r}(x_n)=-1$, $M_{x,r}(x_1)=1$, and
  \begin{equation*}
    M_{x,r}(x_{k}) - M_{x,r}(x_{k+1}) \le \frac{D'}{n+1}
  \end{equation*}
  for all $k \in \{1,\ldots,n-1\}$. Indeed, by letting $n \to \infty$, this implies that $[-1,1]$ is a weak tangent of $F$ and therefore, $F$ has full Assouad dimension.

  Let $C \ge 1$ be as in Lemma \ref{thm:close-maps} and $K \ge 1$ as in Lemma \ref{thm:semiconformal}. Define
  \begin{equation} \label{eq:def-of-D}
    D = \frac{K^{11}C}{\diam(F)\min_{\iii\in\Sigma_2}\|\fii_\iii'\|},
  \end{equation}
  fix $n \in \N$, and choose $0<\eps<\diam(F)/(4KC)$ such that $(1+2D\eps)^{n-1} \le 2$ and $(1-2D\eps)^{n-1} \ge \tfrac12$. Since $F$ contains at least two points and does not satisfy the identity limit criterion, Lemma \ref{thm:close-maps} implies that there exist $0<\delta_1<\eps$ and $\iii_1',\jjj_1' \in \Sigma_*$ such that
  \begin{equation} \label{eq:not-wsc-assouad-one1}
    C^{-1}\delta_1\max\{\|\fii_{\iii_1'}'\|,\|\fii_{\jjj_1'}'\|\} \le |\fii_{\iii_1'}(x)-\fii_{\jjj_1'}(x)| \le C\delta_1\min\{\|\fii_{\iii_1'}'\|,\|\fii_{\jjj_1'}'\|\}
  \end{equation}
  for all $x \in V$. Recall that, by \eqref{eq:semiconformal}, $K^{-1}\|\fii_{\iii}'\| \le
  |\fii_{\iii}'(x)| \le \|\fii_{\iii}'\|$ for all $x \in V$ and $\iii \in \Sigma_*$. In particular,
  this means that $\fii_\iii'$ is either positive or negative and hence, as $V \subset \R$ is an
  open interval, each $\fii_\iii$ is strictly monotone on $V$. Let $y,z \in F \subset V$ be such
  that $y-z = \diam(F)$. The mean value theorem implies that there exists $w \in V$ such that
  \begin{equation*}
    \fii_{\iii_1'}(y) - \fii_{\iii_1'}(z) = \fii_{\iii_1'}'(w)\diam(F).
  \end{equation*}
  If $\fii_{\iii_1'}'(w) > 0$, then \eqref{eq:not-wsc-assouad-one1} implies
  \begin{align*}
    \fii_{\jjj_1'}(y) - \fii_{\jjj_1'}(z) &\ge \fii_{\iii_1'}(y) - \fii_{\iii_1'}(z) - 2C\delta_1\|\fii_{\iii_1'}'\| \\
    &\ge (\fii_{\iii_1'}'(w) - \tfrac12 K^{-1}\|\fii_{\iii_1'}'\|)\diam(F) \\
    &\ge \tfrac12 K^{-1}\|\fii_{\iii_1'}'\|\diam(F) > 0
  \end{align*}
  and hence, $\fii_{\jjj_1'}(y) > \fii_{\jjj_1'}(z)$ yielding $\fii_{\jjj_1'}'(x) > 0$ for all $x
  \in V$. Similarly, if $\fii_{\iii_1'}'(w) < 0$, then we see that $\fii_{\jjj_1'}'(x) < 0$ for all
  $x \in V$. Therefore, the derivatives $\fii_{\iii_1'}'$ and $\fii_{\jjj_1'}'$ have the same sign.
  Let $\iii_1 = \iii_1'\iii_1'$ and $\jjj_1 = \iii_1'\jjj_1'$ and notice that, by the chain rule,
  $\fii_{\iii_1}'$ and $\fii_{\jjj_1}'$ are positive. By
  \eqref{eq:semiconformal-multi}, \eqref{eq:not-wsc-assouad-one1}, and \eqref{eq:semiconformal}, we have
  \begin{align*}
    (KC)^{-1}\delta_1\max\{\|\fii_{\iii_1}'\|, \|\fii_{\jjj_1}'\|\} &\le K^{-1}\|\fii_{\iii_1'}'\||\fii_{\iii_1'}(x) - \fii_{\jjj_1'}(x)| \\
    &\le |\fii_{\iii_1}(x) - \fii_{\jjj_1}(x)| \\
    &\le \|\fii_{\iii_1'}'\||\fii_{\iii_1'}(x) - \fii_{\jjj_1'}(x)|
    \le K^2C\delta_1\min\{\|\fii_{\iii_1}'\|, \|\fii_{\jjj_1}'\|\}.
  \end{align*}
  Furthermore, since $V \subset \R$ is an open interval and $|\fii_{\iii_1}(x)-\fii_{\jjj_1}(x)| >
  0$ for all $x \in V$, we have, by the intermediate value theorem, that $\fii_{\iii_1}(x) >
  \fii_{\jjj_1}(x)$ for all $x \in V$, relabeling $\iii_1$ and $\jjj_1$ if necessary. Therefore,
  \begin{equation} \label{eq:not-wsc-assouad-one2}
    (KC)^{-1}\delta_1\|\fii_{\iii_1}'\| \le \fii_{\iii_1}(\fii_\kkk(x))-\fii_{\jjj_1}(\fii_\kkk(x)) \le K^2C\delta_1\|\fii_{\iii_1}'\|
  \end{equation}
  for all $x \in V$ and $\kkk \in \Sigma_*$. Notice that, by the chain rule, there exists $\kkk \in \Sigma_2$ such that $\fii_{\kkk}'$ is positive. Choose $\kkk_1 = \kkk \cdots \kkk \in \Sigma_*$ such that
  \begin{equation*}
    \eps\|\fii_{\iii_1}'\|\|\fii_{\kkk_1}'\| < \delta_1\|\fii_{\iii_1}'\| \le \eps\|\fii_{\iii_1}'\|\|\fii_{\kkk_1|_{|\kkk_1|-2}}'\|
  \end{equation*}
  and notice that also $\fii_{\kkk_1}'$ is positive. Therefore, it follows from \eqref{eq:not-wsc-assouad-one2} that
  \begin{equation*}
    (KC)^{-1}\eps\|\fii_{\iii_1\kkk_1}'\| \le \fii_{\iii_1\kkk_1}(x)-\fii_{\jjj_1\kkk_1}(x) \le K^6C\Bigl(\min_{\iii\in\Sigma_2}\|\fii_\iii'\|\Bigr)^{-1}\eps\|\fii_{\iii_1\kkk_1}'\|
  \end{equation*}
  for all $x \in V$.

  To find more points being predefined distance apart, we continue inductively.
  Assuming $\iii_l,\jjj_l,\kkk_l \in \Sigma_*$, $l \in \{1,\ldots,k-1\}$, have already been chosen
  for some $k \in \{2,\ldots,n\}$, we apply Lemma \ref{thm:close-maps} as above to find
  $0<\delta_{k}<\eps K^{-2}\|\fii_{\jjj_{k-1}\kkk_{k-1}\cdots\jjj_1\kkk_1}'\|$ and
  $\iii_{k},\jjj_{k} \in \Sigma_*$ such that $\fii_{\iii_{k}}'$ and $\fii_{\jjj_{k}}'$ are positive, and
  \begin{equation*}
    (KC)^{-1}\delta_{k}\|\fii_{\iii_{k}}'\| \le \fii_{\iii_{k}}(\fii_{\kkk\jjj_{k-1}\kkk_{k-1}\cdots\jjj_1\kkk_1}(x))-\fii_{\jjj_{k}}(\fii_{\kkk\jjj_{k-1}\kkk_{k-1}\cdots\jjj_1\kkk_1}(x)) \le K^2C\delta_{k}\|\fii_{\iii_{k}}'\|
  \end{equation*}
  for all $x \in V$ and $\kkk \in \Sigma_*$. Since $\delta_{k}\|\fii_{\iii_{k}}'\| \le \eps
  K^{-2}\|\fii_{\jjj_{k-1}\kkk_{k-1}\cdots\jjj_1\kkk_1}'\|\|\fii_{\iii_{k}}'\| \le
  \eps\|\fii_{\iii_{k}\jjj_{k-1}\kkk_{k-1}\cdots\jjj_1\kkk_1}'\|$, there is $\kkk_{k} \in \Sigma_*$
  such that $\fii_{\kkk_{k}}'$ is positive and
  \begin{align*}
    \eps\|\fii_{\iii_{k}\jjj_{k-1}\kkk_{k-1}\cdots\jjj_1\kkk_1}'\|\|\fii_{\kkk_{k}}'\| &< \delta_{k}\|\fii_{\iii_{k}}'\|
    \le \eps\|\fii_{\iii_{k}\jjj_{k-1}\kkk_{k-1}\cdots\jjj_1\kkk_1}'\|\|\fii_{\kkk_{k}|_{|\kkk_{k}|-2}}'\| \\
    &\le K^2\Bigl(\min_{\iii\in\Sigma_2}\|\fii_\iii'\|\Bigr)^{-1}\eps\|\fii_{\iii_{k}\jjj_{k-1}\kkk_{k-1}\cdots\jjj_1\kkk_1}'\|\|\fii_{\kkk_{k}}'\|.
  \end{align*}
  Note that, by \eqref{eq:semiconformal-multi}, $K^{-2}\|\fii_{\iii\kkk\jjj}'\| \le \|\fii_{\iii\jjj}'\|\|\fii_\kkk'\| \le K^4\|\fii_{\iii\kkk\jjj}'\|$ for all $\iii,\jjj,\kkk\in\Sigma_*$. Therefore,
  \begin{equation} \label{eq:not-wsc-assouad-one3}
  \begin{split}
    (K^3C)^{-1}\eps\|\fii_{\iii_{k}\kkk_{k}\jjj_{k-1}\kkk_{k-1}\cdots\jjj_1\kkk_1}'\| &\le \fii_{\iii_{k}\kkk_{k}\jjj_{k-1}\kkk_{k-1}\cdots\jjj_1\kkk_1}(x)-\fii_{\jjj_{k}\kkk_{k}\jjj_{k-1}\kkk_{k-1}\cdots\jjj_1\kkk_1}(x) \\
    &\le K^8C\Bigl(\min_{\iii\in\Sigma_2}\|\fii_\iii'\|\Bigr)^{-1}\eps\|\fii_{\iii_{k}\kkk_{k}\jjj_{k-1}\kkk_{k-1}\cdots\jjj_1\kkk_1}'\|
  \end{split}
  \end{equation}
  for all $x \in V$. We have thus shown the existence of words $\iii_k,\jjj_k,\kkk_k \in \Sigma_*$,
  $k \in \{1,\ldots,n\}$, for which the derivatives $\fii_{\iii_k}'$, $\fii_{\jjj_k}'$, and
  $\fii_{\kkk_k}'$ are positive and \eqref{eq:not-wsc-assouad-one3} holds for all $k \in
  \{1,\ldots,n\}$.

  We will use \eqref{eq:not-wsc-assouad-one3} to define the required points $x_n < x_{n-1} < \cdots < x_1$ in $F$.
  Let $\hhh_k = \iii_n\kkk_n\cdots\iii_k\kkk_k\jjj_{k-1}\kkk_{k-1}\cdots\jjj_1\kkk_1$ and notice
  that, by the chain rule, $\fii_{\hhh_k}'$ is positive for all $k \in \{1,\ldots,n\}$. Therefore,
  by \eqref{eq:semiconformal} and \eqref{eq:not-wsc-assouad-one3}, we have
  \begin{equation*}
  \begin{split}
    \fii_{\hhh_{k}}(x) - \fii_{\hhh_{k+1}}(x) &\le \|\fii_{\iii_n\kkk_n\cdots\iii_{k+1}\kkk_{k+1}}'\|\bigl(\fii_{\iii_{k}\kkk_{k}\jjj_{k-1}\kkk_{k-1}\cdots\jjj_1\kkk_1}(x)-\fii_{\jjj_{k}\kkk_{k}\jjj_{k-1}\kkk_{k-1}\cdots\jjj_1\kkk_1}(x)\bigr) \\
    &\le K^{10}C\Bigl(\min_{\iii\in\Sigma_2}\|\fii_\iii'\|\Bigr)^{-1}\eps\|\fii_{\hhh_k}'\|
  \end{split}
  \end{equation*}
  and, similarly,
  \begin{equation*}
    \fii_{\hhh_{k}}(x) - \fii_{\hhh_{k+1}}(x) \ge (K^{6}C)^{-1}\eps\|\fii_{\hhh_k}'\| > 0
  \end{equation*}
  for all $x \in V$ and $k \in \{1,\ldots,n-1\}$. Recalling \eqref{eq:semiconformal-diam} and the definition of $D \ge 1$ given in \eqref{eq:def-of-D}, we have thus shown that
  \begin{equation} \label{eq:not-wsc-assouad-one4}
    D^{-1}\eps\diam(\fii_{\hhh_k}(F)) \le \fii_{\hhh_k}(x) - \fii_{\hhh_{k+1}}(x) \le D\eps\diam(\fii_{\hhh_k}(F))
  \end{equation}
  for all $x \in V$ and $k \in \{1,\ldots,n-1\}$. Let $y,z \in F$ be such that $\fii_{\hhh_{k+1}}(y)
  - \fii_{\hhh_{k+1}}(z) = \diam(\fii_{\hhh_{k+1}}(F))$. Since $\fii_{\hhh_{k+1}}'$ and
  $\fii_{\hhh_k}'$ are positive, we have $z<y$ and $\fii_{\hhh_k}(z) < \fii_{\hhh_k}(y)$. Therefore,
  by \eqref{eq:not-wsc-assouad-one4}, we have
  \begin{equation*}
    \diam(\fii_{\hhh_k}(F)) \ge \fii_{\hhh_k}(y) - \fii_{\hhh_k}(z) \ge \diam(\fii_{\hhh_{k+1}}(F)) - 2D\eps\diam(\fii_{\hhh_k}(F))
  \end{equation*}
  and
  \begin{equation*}
    \diam(\fii_{\hhh_{k+1}}(F)) \le (1+2D\eps)\diam(\fii_{\hhh_k}(F))
  \end{equation*}
  for all $k \in \{1,\ldots,n-1\}$. Choosing $z,y \in F$ such that $\fii_{\hhh_{k}}(y) -
  \fii_{\hhh_{k}}(z) = \diam(\fii_{\hhh_{k}}(F))$, we similarly see that
  \begin{equation*}
    \diam(\fii_{\hhh_{k+1}}(F)) \ge (1-2D\eps)\diam(\fii_{\hhh_k}(F))
  \end{equation*}
  for all $k \in \{1,\ldots,n-1\}$. By the choice of $\eps>0$, we have thus shown that
  \begin{equation} \label{eq:not-wsc-assouad-one5}
  \begin{split}
    \tfrac12 \diam(\fii_{\hhh_1}(F)) &\le (1-2D\eps)^{n-1}\diam(\fii_{\hhh_1}(F)) \le \diam(\fii_{\hhh_k}(F)) \\
    &\le (1+2D\eps)^{n-1}\diam(\fii_{\hhh_1}(F)) \le 2\diam(\fii_{\hhh_1}(F))
  \end{split}
  \end{equation}
  for all $k \in \{1,\ldots,n\}$. Fix $x_0 \in F$ and define $x_k = \fii_{\hhh_k}(x_0)$ for all $k
  \in \{1,\ldots,n\}$. It follows from \eqref{eq:not-wsc-assouad-one4} that $x_n < x_{n-1} < \cdots
  < x_1$. Letting $x = (x_n+x_1)/2$ and $r=(x_1-x_n)/2$, we have $M_{x,r}(x_n)=-1$ and
  $M_{x,r}(x_1)=1$. Finally, since \eqref{eq:not-wsc-assouad-one4} and
  \eqref{eq:not-wsc-assouad-one5} imply
  \begin{align*}
    x_1-x_n &= \sum_{k=1}^{n-1} \fii_{\hhh_k}(x_0)-\fii_{\hhh_{k+1}}(x_0) \\
    &\ge D^{-1}\eps \sum_{k=1}^{n-1} \diam(\fii_{\hhh_k}(F)) \ge \tfrac12 D^{-1}\eps (n+1) \diam(\fii_{\hhh_1}(F))
  \end{align*}
  and
  \begin{equation*}
    x_k-x_{k+1} \le D\eps\diam(\fii_{\hhh_k}(F)) \le 2D\eps\diam(\fii_{\hhh_1}(F))
  \end{equation*}
  for all $k \in \{1,\ldots,n-1\}$, we see that
  \begin{equation*}
    M_{x,r}(x_k) - M_{x,r}(x_{k+1}) = \frac{2(x_k-x_{k+1})}{x_1-x_n} \le \frac{8D^{2}}{n+1}
  \end{equation*}
  as required.
\end{proof}

\begin{ack}
  AK was supported by the Finnish Center of Excellence in Analysis and Dynamics Research, the
  Finnish Academy of Science and Letters, and the V\"ais\"al\"a Foundation. ST was supported by
  EPSRC Grant DTG/EP/K503162/1, NSERC Grants 2016-03719 and 2014-03154, and the University of
  Waterloo. JA and ST thank the Edinburgh Mathematical Society for their financial support. Finally,
  AK and ST thank the Institut Mittag-Leffler for an excellent working environment, and
  Bal\'azs B\'ar\'any and the anonymous referee for helpful comments.
\end{ack}

\bibliographystyle{abbrv}
\bibliography{Bibliography}


\end{document}